\numberwithin{equation}{section}
\newcommand{\ZZ}{\mathbb{Z}}
\newcommand{\NN}{\mathbb{N}}
\newcommand{\Ss}{\mathbb{S}}
\newcommand{\RR}{\mathbb{R}}
\newcommand{\EE}{\varepsilon}
\newcommand{\DD}{\textnormal{D}}
\newcommand{\Div}{\textnormal{div}}
\newcommand{\supp}{\textnormal{supp }}
\newcommand{\Lip}{\textnormal{Lip}}
\newcommand{\loc}{\textnormal{loc}}
\newcommand{\Curl}{\textnormal{curl}}
\newcommand{\rot}{\textnormal{rot}}
\newtheorem{Theo}{Theorem}[section]
\newtheorem{lem}[Theo]{Lemma}
\newtheorem{cor}[Theo]{Corollary}
\newtheorem{prop}[Theo]{Proposition}
\theoremstyle{plain}
\theoremstyle{definition}
\newtheorem{defi}[Theo]{Definition}
\theoremstyle{remark}
\newtheorem{Rema}[Theo]{Remark}
\newtheorem*{rema*}{Remark}
\newtheorem*{remas}{Remarks}
\author[Y. Maafa]{Youssouf Maafa}
\address{LEDPA, Universit\'e de Batna --2--\\ Facult\'e des Math\'ematiques et Informatique\\ D\'epartement de Math\'ematiques\\ 05000 Batna Alg\'erie}
\email{y.maafa@univ-batna2.dz}
\author[M. Zerguine]{Mohamed Zerguine}
\address{LEDPA, Universit\'e de Batna --2--\\ Facult\'e des Math\'ematiques et Informatique\\ D\'epartement de Math\'ematiques\\ 05000 Batna Alg\'erie}
\email{m.zerguine@univ-batna2.dz}
\date{}
\begin{document}

\title[Inviscid limit]
{Inviscid limit  for the viscous 2D Boussinesq system with temperature-dependent diffusivity}
\keywords{Boussinesq sytem, Temperature-dependent diffusivity, Smooth vortex patch, Striated regularity, Global well-posedness.}
\subjclass[2010]{35B33, 35Q35, 76D03, 76D05.}

\maketitle
\begin{abstract} 
We establish global-posedness in time for the viscous Boussinesq equations in two dimensions of space with temperature-dependent diffusivity in the framework of a smooth vortex patch. We also provide the inviscid limit for velocity, temperature, and associated flow toward the system studied very recently in \cite{Paicu-Zhu} as soon as the viscosity goes to zero, and quantify the rate of convergence.
\end{abstract}

\tableofcontents
%%%%%%%%%%%%%%

\section{Introduction} 
\subsection{Model and synopsis of results} In the modeling of geophysical fluid dynamics, the associated fields of pressure, temperature, stratification, and density,\ldots, are necessary factors since they are dynamically linked to the motion of the oceans and atmosphere. Generally, it is very hard to explore these dynamics either theoretically or experimentally. For theoretical fluid mechanics the difficulty springs fundamentally from the overlap of the aforementioned factors. Nevertheless, simplification and approximation guided to the brilliant of set equations, one of them Boussinesq type systems, see \cite{Pedlosky}. In the derivatives of such systems, it is usual to assume that the fluid viscosity and thermal conductivity are positive constants; however, there are several important physical situations where such hypotheses are not adequate, and one must consider the possibility that such viscosity and thermal conductivity may be temperature-dependent, see, e.g. \cite{Boldrini-Climent-Ezquerra-Rojas-Medar2}. The Cauchy problem associates with such phenomena are given by the following set equations. 
\begin{equation}\label{Eq-1}
\left\{ \begin{array}{ll} 
\partial_{t}v+v\cdot\nabla v-\nabla\cdot\big(\mu(\theta)\nabla v\big)+\nabla p=G(\theta) & \textrm{if $(t,x)\in \RR_+\times\RR^2$,}\\
\partial_{t}\theta+v\cdot\nabla \theta-\nabla\cdot\big(\kappa(\theta)\nabla \theta\big) =0 & \textrm{if $(t,x)\in \RR_+\times\RR^2$,}\\ 
\nabla\cdot v =0, &\\ 
(v,\theta)_{| t=0}=(v_0,\theta_0).\tag{GB}
\end{array} \right .
\end{equation}
Usually, $v(t,x)\in\RR^2$ refers to the velocity vector field localized in variable space $x\in\RR^2$ at a time variable $t$ which is assumed to be incompressible $\nabla\cdot v=0$, and $p(t,x)\in\RR$ and $\theta(t,x)\in\RR^{\star}_{+}$ are a thermodynamical variables representing respectively the pressure and the temperature. The buoyancy effects on the fluid expresses by the following vector-valued function$G(\theta)=G_1(\theta)\vec{e}_1+G_2(\theta)\vec{e}_2$, satisfying $G\in C^3(\RR^2)$ and $G(0)=(0,0)$, with $\vec e_1=(1,0)$ and $\vec e_2=(0,1)$. The scalar functions $\mu(\cdot)$ and $\kappa(\cdot)$ are smooth and fulfill with their derivatives the following bound condition
\begin{equation}\label{Eq-2}
\mu_0^{-1}\le \mu(\theta)\le\mu_0\;\mbox{and}\; \mu'(\theta)\le\mu_0,\quad \kappa_0^{-1}\le \kappa(\theta)\le\kappa_0\;\mbox{and}\; \kappa'(\theta)\le\kappa_0. 
\end{equation}
\hspace{0.5cm}The experiments done by von Tippelkirch \cite{Tippelkirch}, for instance, clearly confirm the influence of the viscosity dependent-temperature on the main macroscopic features of the flow, and thus the necessity of analyzing such more complex situations. 

\hspace{0.5cm}Also remarkable that equations appear in the system \eqref{Eq-1} are strongly nonlinear compared to the classical Boussinesq, so we find some technical difficulties in dealing with them. 

\hspace{0.5cm}Let us recall some significative results, where $\mu(\theta)=\mu, \kappa(\theta)=\kappa$ are positive constant and $G(\theta)=(\theta,0)$. The generalized Boussinesq system \eqref{Eq-1} closes with the classical one which has widely studied whether theoretically or experimentally, especially the local/global well-posedness topic has received great attention in PDEs community. We embark by the work of Cannon and Dibenedetto \cite{Cannon-Dibenedetto} and Guo \cite{Guo} were exploited the classical method to gain the global regularity in $\RR^2$, while the case $\mu>0$ and $\kappa=0$ was successfully treated independently by Chae \cite{Chae} and Hou and Li \cite{Hou-Li} in the setting of subcritical Sobolev regularity which enhanced later by Abidi and Hmidi \cite{Abidi-Hmidi}, once $(v_0,\theta_0) \in B^{-1}_{\infty,1}\cap L^2\times B^{0}_{2,1}$. The opposite case $\mu=0$ and $\kappa>0$ is also well-investigated by Chae under the same regularity. Thereafter, the same result was extended by Hmidi and Keraani in \cite{Hmidi-Keraani-1} for critical Besov spaces $(v^0,\rho^0)\in B^{1+2/p}_{p,1}\times B^{2/p-1}_{p,1}\cap L^r,\;r>2$. For the Yudovitch's solutions the successful attempt goes back to Danchin and Paicu where they proved in \cite{Danchin-Paicu} that we can go beyond the strong solutions and establish the global existence and uniqueness for weak initial data in the weak sense. In the same way, Hmidi and the second author established in \cite{Hmidi-Zerguine0} a global well-posedness topic with fractional dissipation $(-\Delta)^{\frac{\gamma}{2}}$, with $\gamma\in]1,2]$ by exploring the Lagrangian variables. The critical case $\gamma=1$ was solved later in \cite{Hmidi-Keraani-Rousset-1} by using the special structure of the equations. For the other improvement and connected topics we refer the reader to \cite{Hmidi-Keraani-Rousset-2, Hmidi-Rousset, Larios-Lunasin-Titi, Miao-Xue,Tao-Zhang,Weinan-Shu,Wu-Xu,Wu-Xu-Xue-Ye}.

\hspace{0.5cm}In the general setting, that is $\mu(\cdot)$ and $\kappa(\cdot)$ are a functions fulfilling the requirement \eqref{Eq-2} and $G(\cdot)=(G_1(\cdot),G_2(\cdot))$, the situation becomes difficult. Worth mentioning, Lorca and Boldrini succeed to recover \eqref{Eq-1} locally in time for a strong solution for general initial data, whilst globally in time under smallness condition. Besides, they settled that the same system is globally well-posed in the context of a weak solution. Thereafter, Wang and Zhang established in \cite{Wang-Zhang} that \eqref{Eq-1} admits a unique global solution, once $v_0,\theta_0\in H^{s}$ with $s>2$. Showing crucially that $\theta$ is H\"older continuous by exploring De Giorgi argument and some background of harmonic analysis. This result was reached later by Sun and Zhang in \cite{Sun-Zhang} in a more general case, namely in a bounded domain for \eqref{Eq-1} and tridimensional infinite Prandtl number model with the viscosity and diffusivity depending on the temperature. In the same way, Francesco investigated in \cite{Francesco} that the system in question is globally well-posed for a weak solution in any dimension provided that initial temperature in only bounded and the initial velocity belongs to some critical Besov space concerning scaling invariance. In the situation where $\nabla\cdot\big(\kappa(\theta)\nabla\theta\big)$ is replaced by $\kappa(-\Delta)^{\frac12}\theta$, Abidi and Zhang proved in \cite{Abidi-Zhang} that the obtained system has a unique global solution whenever the viscosity closes to positive constant in $L^\infty-$norm. For large literature we refer to \cite{Li-Xu,Li-Pan-Zhang,Lorca-Boldrini-1,Lorca-Boldrini-2} and references therein.

\hspace{0.5cm}Our original motivation of this reasearch is to to reach the same result in the spirit of \cite{Paicu-Zhu} in the case, where $G(\theta_\mu)=\theta_\mu\vec e_2$, that is the following system.
\begin{equation}\label{Eq-3}
\left\{ \begin{array}{ll} 
\partial_{t}v_\mu+v_\mu\cdot\nabla v_\mu-\mu\Delta v_\mu+\nabla p=\theta_\mu\vec e_2 & \textrm{if $(t,x)\in \RR_+\times\RR^2$,}\\
 \partial_{t}\theta_\mu+v_\mu\cdot\nabla \theta_\mu-\nabla\cdot\big(\kappa(\theta_\mu)\nabla \theta_\mu\big) =0 & \textrm{if $(t,x)\in \RR_+\times\RR^2$,}\\ 
\nabla\cdot v_\mu =0, &\\ 
({v_\mu},{\theta_\mu})_{| t=0}=({v}_\mu^0,{\theta_\mu}^0).\tag{B$_\mu$} 
\end{array} \right .
\end{equation}
This study comprises twofold. The first one concerns the global well-posedness in time in the context of a smooth vortex patch. The second one addresses to the inviscid limit of the system \eqref{Eq-3} to inviscid one, 
\begin{equation}\label{Eq-4}
\left\{ \begin{array}{ll} 
\partial_{t}v+v\cdot\nabla v+\nabla p=\theta \vec e_2, &\\
 \partial_{t}\theta+v\cdot\nabla \theta-\nabla\cdot(\kappa(\theta)\nabla \theta) =0, &\\ 
\nabla\cdot v =0, &\\ 
(v,\theta)_{| t=0}=(v^0,\theta^0).\tag{B$_0$} 
\end{array} \right .
\end{equation}
whereas the associated flows when the viscosity goes to zero, and quantify the rate of convergence. 

\hspace{0.5cm}To formulate our problem it is very convenient to use the vorticity formulation for that equations. This quantity $\omega$ is very efficient in the analysis of fluid dynamics, in particular, it measures how fast the fluid rotates and can be identified as a scalar function $\omega=\partial_1 v^2-\partial_2 v^1$. To derive an evolution equation of $\omega$, taking the  $\Curl$ operator to the momentum equation in \eqref{Eq-6} one obtains
\begin{equation}\label{Eq-5}
\left\{ \begin{array}{ll}
\partial_{t}\omega_{\mu}+v_{\mu}\cdot\nabla\omega_{\mu}-\mu\Delta\omega_{\mu}=\partial_1\theta_{\mu}, & \\
\partial_{t}\theta_{\mu}+v_\mu\cdot\nabla \theta_{\mu}-\nabla\cdot(\kappa(\theta_{\mu})\nabla \theta_{\mu}) =0, & \\
v_{\mu}=\nabla^\perp\Delta^{-1}\omega_{\mu},\\
(\theta_{\mu},\omega_{\mu})_{| t=0}=(\theta_{\mu}^0,\omega_{\mu}^0).
\end{array} \right.\tag{VD$_{\mu} $}
\end{equation}
In the case, where the viscosity $\mu=0$, the previous system takes the form.
\begin{equation}\label{Eq-50}
\left\{ \begin{array}{ll}
\partial_{t}\omega+v\cdot\nabla\omega=\partial_1\theta, & \\
\partial_{t}\theta+v\cdot\nabla \theta-\nabla\cdot(\kappa(\theta)\nabla \theta) =0, & \\
v=\nabla^\perp\Delta^{-1}\omega,\\
(\theta,\omega)_{| t=0}=(\theta^0,\omega^0).
\end{array} \right. \tag{VD$_{0}$}
\end{equation}
Before telling the main results of this paper, let us retrieve some works regarding the smooth patch in a particular case, $\kappa(\theta)=\kappa$ is a positive constant. A vortex patch means that $\omega_{0}={\bf 1}_{\Omega_0}$, with $\Omega_0$, is a connected bounded domain. We recall that the vortex patch problem originated with the classical planar Euler equations, a particular case of the previous system, however, $\mu=0$ and $\theta=\theta^0$. So, Euler's equations take the form
\begin{equation}\label{Eq-6}
\left\{ \begin{array}{ll}
\partial_{t}\omega+v\cdot\nabla \omega=0, & \\
v=\nabla^\perp\Delta^{-1}\omega, &\\
\omega_{| t=0}=\omega^0.
\end{array} \right.\tag{E}
\end{equation}
Equations \eqref{Eq-6} are of nonlinear kind, consequently the characteristic method provides 
\begin{equation}\label{Eq-6}
\omega(t,x)=\omega_0(\Psi^{-1}(t,x)).
\end{equation}
with $\Psi$ is the flow generated by the velocity $v$,
\begin{equation}\label{Eq-7}
\left\{ \begin{array}{ll} 
\partial_{t}\Psi(t,x)=v(t,\Psi(t,x)), & \\
\Psi(0,x)=x.
\end{array} \right.
\end{equation}
In light of \eqref{Eq-6}, we discover that $\omega(t)={\bf 1}_{\Omega_t}$ with $\Omega_t=\Psi(t,\Omega_0)$ is also a vortex patch that moves through the time. It should be mentioned that the regularity of the time evolution domain $\Omega_t$ has gained a lot of attention, however, the successful attempt has been rigorously justified by Chemin \cite{Chemin2}  and subsequently developed in \cite{Bertozzi-Constantin,Serfati}. The Chemin's paradigm claims that the Lipschitz norm of velocity relies upon that of vorticity's stratied regularity $\partial_{X_t}\omega$ in H\"older space with a negative index $C^{\EE-1}$ with $0<\EE<1$ via a stationary logarithmic estimate with $X=(X_t)$ is a time dependent family of vector field characterized by a distinguished properties in our analysis, see subsection \ref{X_t}. The rest of the topics for the of Euler and Navier-Stokes system in different situations are accomplished in \cite{Danchin0,Depauw,Hmidi-1}.

\hspace{0.5cm}For the Boussinesq system \eqref{Eq-1}, the first result in this way is due to Hmidi and the second author, where they developed an elegant work in \cite{Hmidi-Zerguine} for $\mu=0, \nabla\cdot\big(\kappa(\theta)\nabla\theta\big)=\kappa\Delta$ and demonstrated that the obtained system is globally well-posed in time by exploring the asymptotic behavior of the density. The second author \cite{Zerguine}, reached the same result by replacing the full dissipation by critical one which is gaining a sharper assertion compared to Chemin's result about the classical Euler equations. Very recently for $\mu$ and $\kappa$ are positive constant the problem was done by Meddour and the second author in \cite{Meddour-Zerguine} and enhanced the rate convergence to that \cite{Abidi-Danchin}. %In the limit case, $\mu=\kappa=0$, Hassainia and Hmidi stated recently in \cite{HH} an even more accurate result on the local well-posedness in the context of a regular/singular patch. 
A successful attempt for anisotropic Boussinesq either for viscosity or diffusivity was recently achieved in \cite{Paicu-Zhu} by Paicu and Zhu. For another connected subject, refer to \cite{Danchin-Zhang,Fanelli,Meddour}.

%\hspace{0.5cm}
%\begin{equation}\label{Eq-5}
%\left\{ \begin{array}{ll} 
%\partial_{t}v+v\cdot\nabla v+\nabla p=0 & \textrm{if $(t,x)\in \RR_+\times\RR^2$,}\\
%\nabla\cdot v =0, &\\ 
%v_{| t=0}=v^0.\tag{E} 
%\end{array} \right .
%\end{equation}
\subsection{Main results} In this subsection we state the main results of this paper and discuss the headlines of their proofs. Theorem \ref{Th-1} below establishes the global well-posedness topic for the system \eqref{Eq-3} whenever the initial data having a smooth patch structure and the density is a scalar function belongs to certain Besov space space. The second theorem cares with the inviscid limit of the viscous system \eqref{Eq-3} to inviscid one \eqref{Eq-4} when a viscosity goes to zero. In particular, we shall evaluate the rate of convergence between velocities, densities, vortices, and the associate flows.
     
\hspace{0.5cm}The first main result reads as follows. 
\begin{Theo}\label{Th-1} Let $\Omega^0$ be a simply connected bounded domain whose boundary $\partial \Omega^0$ is a Jordan curve in $C^{1+\EE}$ with $0< \EE<1$, and $v_{\mu}^0$ be an initial velocity vector fields in free-divergence which its initial vorticity $\omega_\mu^0={ 1}_{\Omega^0}$.  Let $  \theta_\mu^0 \in {L^{2}}\cap B_{p,r}^{2-\frac{2}{r}}$ with $(p,r)\in ]2, \infty[\times]1,\infty[$ be such that $\frac1p+\frac2r<2 $, then the following assertions are hold. 
\begin{enumerate}
\item[{\bf(1)}] There exists a small positive constant $\epsilon_0$ such that if
\begin{equation}\label{Assup-Th-1}
\| \kappa(\cdot )-1\|_{L^\infty(\RR)}\leq \epsilon_0,
\end{equation} 
system \eqref{Eq-3} admits a global solution fulffils for any $T>0$ and some $\eta>1$,
\begin{equation*}
(v_\mu,\theta_\mu)\in L^\infty\big([0,T];\Lip\big)\times L^\infty\big([0,T];L^2\big)\cap L^{\eta}\big([0,T];W^{2,p}\big),\quad \| \nabla v_\mu\|_{L^{\infty}}\leq  C_0e^{C_0t^{8}}. 
\end{equation*}
Besides, if $p$ and $r$ satisfy $\frac1p+\frac1r\le 1$, then the solution is unique.
\item[{\bf(2)}] The boundary of the transported  domain $\Omega(t)\triangleq \Psi_\mu(t,\Omega^0)$ is $C^{1+\epsilon} $ for all $ t\geq0$ with  $\Psi_\mu$ denotes the flow associated to velocity $v_\mu$.
\end{enumerate}
\end{Theo}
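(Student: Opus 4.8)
The proof follows the Chemin paradigm for vortex patches, adapted to accommodate the quasilinear, temperature-dependent diffusion, all estimates being carried out uniformly in the viscosity $\mu\in(0,1]$ (this uniformity is what the companion inviscid-limit theorem will exploit).

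\textbf{Step 1 (parabolic estimates for the temperature).} I would first recast the temperature equation of \eqref{Eq-5} as the perturbed heat equation
\begin{equation*}
\partial_{t}\theta_{\mu}-\lap\theta_{\mu}=\Div\big((\kappa(\theta_{\mu})-1)\nabla\theta_{\mu}\big)-v_{\mu}\cdot\nabla\theta_{\mu}.
\end{equation*}
The $L^{2}$ energy balance (using $\kappa\ge\kappa_{0}^{-1}$) gives $\theta_{\mu}\in L^{\infty}_{T}(L^{2})\cap L^{2}_{T}(\dot H^{1})$, and an $L^{p}$ maximum-principle argument gives $\theta_{\mu}\in L^{\infty}_{T}(L^{p})$. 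Invoking then maximal $L^{\eta}_{T}(L^{p})$ regularity for the heat semigroup together with the smallness hypothesis $\|\kappa(\cdot)-1\|_{L^{\infty}}\le\epsilon_{0}$, the quasilinear term $(\kappa(\theta_{\mu})-1)\nabla^{2}\theta_{\mu}$ is absorbed on the left, while the drift term is controlled by $\|v_{\mu}\|_{L^{1}_{T}(\Lip)}$; propagating the initial Besov regularity $\theta_{\mu}^{0}\in B_{p,r}^{2-2/r}$ this way yields
\begin{equation*}
\|\theta_{\mu}\|_{L^{\infty}_{T}(B_{p,r}^{2-2/r})}+\|\theta_{\mu}\|_{L^{\eta}_{T}(W^{2,p})}\le\Phi\big(T,\|v_{\mu}\|_{L^{1}_{T}(\Lip)}\big)
\end{equation*}
for an increasing function $\Phi$. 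Since $p>2$, this gives in particular $\partial_{1}\theta_{\mu}\in L^{\eta}_{T}(W^{1,p})\hookrightarrow L^{\eta}_{T}(L^{\infty})$, the forcing in the vorticity equation.

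\textbf{Step 2 (vorticity bounds and striated regularity — the core).} From $\partial_{t}\omega_{\mu}+v_{\mu}\cdot\nabla\omega_{\mu}-\mu\lap\omega_{\mu}=\partial_{1}\theta_{\mu}$ the transport--diffusion $L^{q}$ estimate ($q\in[2,\infty]$) gives $\|\omega_{\mu}(t)\|_{L^{q}}\le\|{\bf 1}_{\Omega^{0}}\|_{L^{q}}+\int_{0}^{t}\|\partial_{1}\theta_{\mu}(s)\|_{L^{q}}\,ds$, hence $\omega_{\mu}\in L^{\infty}_{T}(L^{p}\cap L^{\infty})$ via Step 1. Next I would propagate the conormal regularity: with $X_{t}=(X_{t,\lambda})_{\lambda}$ the family of vector fields transported by the flow $\Psi_{\mu}$ from an initial family tangent to $\partial\Omega^{0}$ and generating its $C^{\EE}$ structure (subsection \ref{X_t}), one has $\partial_{X_{0}}\omega_{\mu}^{0}\in C^{\EE-1}$; differentiating the vorticity equation along $X_{t}$ produces a transport--diffusion equation for $\partial_{X_{t}}\omega_{\mu}$ whose source terms are the commutator $[\partial_{X_{t}},\mu\lap]$, a $\nabla v_{\mu}$-stretching term, and $\partial_{X_{t}}\partial_{1}\theta_{\mu}$, this last one controlled by the $W^{2,p}$-smoothing of $\theta_{\mu}$ from Step 1. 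Combining the transport estimates for $\|X_{t}\|_{C^{\EE}}$ and for $\inf_{x}|X_{t}(x)|$, the conormal bound for $\partial_{X_{t}}\omega_{\mu}$, and Chemin's stationary logarithmic estimate bounding $\|\nabla v_{\mu}(t)\|_{L^{\infty}}$ by $\|\omega_{\mu}\|_{L^{p}\cap L^{\infty}}$ and the conormal data of $\omega_{\mu}$ up to a logarithm of the non-degeneracy of $X_{t}$, one closes a Gronwall inequality for the aggregate $Y_{\mu}(t)=\|v_{\mu}\|_{L^{1}_{t}(\Lip)}+\|\omega_{\mu}\|_{L^{\infty}_{t}(L^{p}\cap L^{\infty})}+\|X_{t}\|_{C^{\EE}}+(\inf_{x}|X_{t}(x)|)^{-1}+\|\partial_{X_{t}}\omega_{\mu}\|_{L^{\infty}_{t}(C^{\EE-1})}$, producing $Y_{\mu}(t)\le C_{0}e^{C_{0}t^{8}}$ uniformly in $\mu$; the polynomial $t^{8}$ in the exponent reflects the iterated feedback between the temperature and velocity estimates.

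\textbf{Step 3 (existence, uniqueness, and the patch boundary).} Solutions are constructed by mollifying the data $(\omega_{\mu}^{0},\theta_{\mu}^{0})$, solving the smooth system, and passing to the limit with the uniform bounds of Steps 1--2 and standard compactness; the limit solves \eqref{Eq-3} with the stated regularity and the bound on $\|\nabla v_{\mu}\|_{L^{\infty}}$. For uniqueness under $\tfrac1p+\tfrac1r\le1$ I would take two solutions with the same data and estimate the differences $\delta v=v_{\mu}^{1}-v_{\mu}^{2}$ in $L^{2}$ and $\delta\theta=\theta_{\mu}^{1}-\theta_{\mu}^{2}$ in a space of negative index (to absorb the loss of one derivative in $\delta(\kappa(\theta)\nabla\theta)$), the condition $\tfrac1p+\tfrac1r\le1$ guaranteeing enough integrability of $\nabla\theta_{\mu}$ for Gronwall to close. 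Finally, since $v_{\mu}\in L^{\infty}_{T}(\Lip)$, the flow $\Psi_{\mu}(t,\cdot)$ is a bi-Lipschitz homeomorphism and $\omega_{\mu}(t)={\bf 1}_{\Omega(t)}$ with $\Omega(t)=\Psi_{\mu}(t,\Omega^{0})$; the transported field $X_{t}$ stays tangent to $\partial\Omega(t)$, is of class $C^{\EE}$ and non-degenerate by Step 2, so $\partial\Omega(t)$ carries a non-vanishing $C^{\EE}$ tangent vector field and is therefore a $C^{1+\EE}$ Jordan curve for every $t\ge0$. The main obstacle is to run Steps 1 and 2 simultaneously: the forcing $\partial_{1}\theta_{\mu}$ driving the vorticity is only reachable through quasilinear parabolic regularity with a merely Lipschitz drift $v_{\mu}$, while the conormal structure of $\omega_{\mu}$ must survive this forcing; the smallness $\|\kappa(\cdot)-1\|_{L^{\infty}}\le\epsilon_{0}$ is exactly what makes the diffusion perturbatively treatable and the maximal regularity usable, and careful bookkeeping of the commutators $[\partial_{X_{t}},v_{\mu}\cdot\nabla]$ and $[\partial_{X_{t}},\lap]$, uniformly in $\mu$, is what keeps the whole scheme closed.
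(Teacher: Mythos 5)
Your overall architecture (parabolic smoothing for $\theta_\mu$, propagation of conormal regularity along the transported family $X_t$, Chemin's logarithmic estimate, Gronwall, then the geometric construction of a tangent admissible family for the patch boundary) is the same as the paper's. But there is a genuine gap in the way you close the loop between Steps 1 and 2. You control the drift term in the temperature equation by $\|v_\mu\|_{L^1_T(\Lip)}$ and record the outcome as $\|\theta_\mu\|_{L^\eta_T(W^{2,p})}\le\Phi\big(T,\|v_\mu\|_{L^1_T(\Lip)}\big)$ for an unspecified increasing $\Phi$. Since $\|\omega_\mu\|_{L^\infty_t(L^\infty)}$ is then bounded by $\int_0^t\|\partial_1\theta_\mu\|_{L^\infty}$, the prefactor of the logarithm in Chemin's estimate itself depends on $V_\mu(t)=\|v_\mu\|_{L^1_t(\Lip)}$, and the resulting differential inequality $V_\mu'\lesssim \Phi(t,V_\mu)\,(1+V_\mu)$ does not close to the single exponential $C_0e^{C_0t^8}$ you claim; for the typical exponential $\Phi$ coming from transport-type propagation it need not close globally at all. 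The stated bound is not a cosmetic detail: the polynomial-in-time control of $\|\omega_\mu\|_{L^\infty}$ is exactly what makes the final Gronwall produce $e^{C_0t^8}$.

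The paper avoids this by making the temperature and $L^\infty$-vorticity estimates \emph{independent} of $\nabla v_\mu$. The drift is written as $\nabla\cdot(v_\mu\theta_\mu)$ and estimated through $\|v_\mu\|_{L^\infty_t L^\infty}$ only; by Gagliardo--Nirenberg and Calder\'on--Zygmund (Proposition \ref{C.Z}) this is bounded by $\|v_\mu\|_{L^2}^{a}\|\omega_\mu\|_{L^{2p}}^{b}$, and $\|\omega_\mu\|_{L^{2p}}$ in turn by $\|\nabla\theta_\mu\|_{L^1_tL^{2p}}$, so the whole chain is closed by Young's inequality and absorption (together with the smallness \eqref{Assup-Th-1} to absorb the quasilinear term), yielding the polynomial bounds of Proposition \ref{prop theta} and Corollary \ref{cor}, namely $\|\nabla^2\theta_\mu\|_{L^\eta_tL^p}+\|\omega_\mu(t)\|_{L^\infty}\le C_0(1+t)^7$. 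Only then is the conormal machinery run, and the logarithmic estimate gives $\|\nabla v_\mu\|_{L^\infty}\le C_0e^{C_0t^8}$. A secondary remark: since $\partial_{X_t}$ commutes with $\partial_t+v_\mu\cdot\nabla$ (Proposition \ref{com}), the equation for $\partial_{X_t}\omega_\mu$ carries no $\nabla v_\mu$-stretching source, only the commutator $-\mu[\Delta,X_t]\omega_\mu$ and $\partial_{X_t}\partial_1\theta_\mu$; your extra stretching term is harmless but indicates the commutation structure was not fully used. You should restructure Step 1 so that the forcing estimates are closed by absorption without reference to $\|v_\mu\|_{L^1_T(\Lip)}$.
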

\hspace{0.5cm}Some comments are listed in the following remarks.
\begin{remas}\label{rema theo1 }
\begin{itemize}
\item[--] When the viscosity $\mu =0$, we find the same result as  in  \cite{Paicu-Zhu} for the system \eqref{Eq-4}.

\item[--] The growth of the gradient velocity for the system \eqref{Eq-3} is strongly increases compared to the classical Boussinesq system ($\kappa(\theta)=\kappa$) recently studied in \cite{Meddour-Zerguine} due to the $L^\infty-$estimate of the vorticity,
\begin{equation*}
\Vert \omega_\mu(t)\Vert_{L^\infty} \le C_0(1+t)^{7},
\end{equation*} 
which not being optimal. To my knowledge, the optimality comes from evolution $\theta_{\mu}$ in time, which remains a fruitful field for exploration as we have developed in \cite{Hmidi-Zerguine}.
\item[--] The previous theorem is restrictive because we don't clear the connection between the Lipschitz norm of the velocity and the striated regularity of the initial vorticity $\partial_{X_0}\omega_0$ in $C^{\EE-1}$. We will state the general version in section \ref{S-V-P}.

\item[--] According to \cite{Paicu-Zhu}, the system \eqref{Eq-3} also admits a unique global solution \`a la Yudovich because the presence of $-\mu\Delta$ in the system \eqref{Eq-3} contributes a more regularity.
\end{itemize}
\end{remas}

The hinge phase in the proof of Theorem \ref{Th-1} is to bound the Lipschitz norm of the velocity $\nabla v$ in $L^1_tL^\infty$ with respect to the striated or co-normal regularity of the vorticity $\omega$ in anisotropic H\"older space $C^{\EE}(X)$ spaces by means of logarithmic estimate. The benefits of the family $X=(X_t)$ would involve further factors. Among them that evolves the inhomogeneous transport equation $$\partial_t X_t+v\cdot\nabla X_t=\partial_{X_t}v$$ and commutes with the transport operator $\partial_t+v\cdot\nabla$ in the sense that $[X,\partial_t+v\cdot\nabla]=0$, with $[\cdot,\cdot]$ refers to Lie bracket, see, Section \ref{S-V-P} below.

\hspace{0.5cm}In our situation, the matters may be quite different and contribute to technical difficulties due to the presence term $-\mu\Delta$ in the $v-$equation. Indeed, applying the directional derivative $\partial_{X_t}$ to $\omega_\mu$ in the system \eqref{Eq-5} to obtain
\begin{equation*}
\big(\partial_t+v\cdot\nabla-\mu\Delta\big)\partial_{X_t}\omega_\mu=-\mu[\Delta, \partial_{X_t}]\omega_{\mu}+\partial_{X_t}\partial_1\theta_\mu.
\end{equation*}
To surmount these difficulties, we treat the additional term $\mu[\Delta, \partial_{X_t}]\omega_{\mu}$ as in \cite{Danchin0,Hmidi-1} for Navier-Stokes equations in two dimension of spaces. Eventhough, the term $\partial_{X_t}\partial_1 \theta_\mu$ can be down by applying an elementary estimate of the commutator $\partial_{X_t}\partial_1\theta_\mu=\partial_1\partial_{X_t}\theta_\mu+[\partial_{{X_t}},\partial_1]\theta_\mu$.

%\begin{equation}\label{estimation v th on theoreme 1}
  %\Vert v_\mu \Vert_{L^\infty_t W^{1,2}}  + \Vert \theta_\mu \Vert_{L^\infty_tW^{2,p}}+  \Vert \theta_\mu \Vert_{L^\infty_t L^{2}}+ \Vert \omega _\mu \Vert_{L^\infty_t L^{\infty}} \leq C_0(1+t^3)
%\end{equation}
%if $  \frac{1}{p}+\frac{2}{r}\leq 1 $ then the solution is unique .

%Furthermore  for  $X_0 \in C^\epsilon$ such that  $ \partial_{X_0} \omega_0 \in L^p  $   then there exists a unique globe solution $X$ to equation more precisely , for every $t \geq 0$
%\begin{equation}\label{estimation grad v theoreme 1} 
%\Vert \nabla v_\mu\Vert_{L^\infty_t L^{\infty}}\leq C_0e^{C_0t^{4}} 
%\end{equation}
%\textnormal{and}
%$${\widetilde{\Vert}} X_{\lambda }\Vert_{L^\infty_t C^{\epsilon}} \leq C_0e^{C_0t^{5}} $$

\hspace{0.5cm}The second main result discusses the inviscid limit between velocities, densities and vortices and estimate the rate of convergence. Especially, we will prove the following theorem. 
\begin{Theo}\label{Theo1.2}
Let $(v_{\mu}, \rho_{\mu})$, $(v, \rho)$, $(\omega_{\mu}, \rho_{\mu})$ and  $(\omega,\rho)$ be the solution of the \eqref{Eq-3}, \eqref{Eq-4},\eqref{Eq-5}, and \eqref{Eq-50} respectively with the same initial data  satisfies the condition of Theorem \ref{Th-1} such that  $\omega_\mu^0=\omega^0= \mathbf{1}_{\Omega_0}$ where $ \Omega_0 $ is simply connected bounded domain. Then for all $t\ge0, \mu\in]0,1[$ and $p\in[2,+\infty[$ the following assertions hold true.
\begin{itemize}
\item[{\bf(1)}] $\Vert v_\mu(t) -v (t)\Vert_{L^p} + \Vert \theta_\mu (t)-\theta(t) \Vert_{L^p } \leq C_0e^{\exp{C_0t^{8} }} (\mu t)^{\frac{1}{2}+\frac{1}{2p}} $, 
\item[{\bf(2)}] $\Vert \omega _\mu(t) -\omega (t)\Vert_{L^p} \leq C_0e^{\exp{C_0t^{8} }} (\mu t)^{\frac{1}{2p}}$, 
\item[{\bf(3)}] If $\Psi_\mu $ and $\Psi $ denote the flow associated to $v_\mu$ 
and $v$ respectively  then we have
$$ \Vert \Psi _\mu (t) -\Psi (t)\Vert_{L^\infty} \leq C_0e^{\exp{t^{8} }} (\mu t)^{\frac{1}{4}} .$$
\end{itemize}
\end{Theo}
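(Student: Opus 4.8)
The plan is to track the three differences $\delta v:=v_\mu-v$, $\delta\theta:=\theta_\mu-\theta$ and $\delta\omega:=\omega_\mu-\omega$, all of which vanish at $t=0$ since both systems start from the same data. Subtracting \eqref{Eq-4} from \eqref{Eq-3}, \eqref{Eq-50} from \eqref{Eq-5}, and using that the temperature equation is \emph{identical} in the two systems, one gets (with $\delta p:=p_\mu-p$)
\begin{align*}
&(\partial_t+v_\mu\cdot\nabla)\delta v-\mu\Delta\delta v+\nabla\delta p=-\delta v\cdot\nabla v+\delta\theta\,\vec e_2+\mu\Delta v,\qquad \nabla\cdot\delta v=0,\\
&(\partial_t+v_\mu\cdot\nabla)\delta\theta-\nabla\cdot\big(\kappa(\theta_\mu)\nabla\delta\theta\big)=-\delta v\cdot\nabla\theta+\nabla\cdot\big((\kappa(\theta_\mu)-\kappa(\theta))\nabla\theta\big),\\
&(\partial_t+v_\mu\cdot\nabla)\delta\omega-\mu\Delta\delta\omega=-\delta v\cdot\nabla\omega+\mu\Delta\omega+\partial_1\delta\theta .
\end{align*}
Throughout I use, uniformly in $\mu\in(0,1)$, the a priori bounds from Theorem \ref{Th-1} and its proof: $\|\nabla v_\mu\|_{L^\infty}\le C_0e^{C_0t^8}$, $\|\omega_\mu(t)\|_{L^2\cap L^\infty}\le C_0(1+t)^7$, $\theta_\mu\in L^\eta_tW^{2,p}\hookrightarrow L^\eta_tW^{1,\infty}$ (as $p>2$), and the fact that $\Omega(t)$ keeps a $C^{1+\EE}$ boundary, so $\omega(t)=\mathbf 1_{\Omega(t)}$ lies in $B^{1/q}_{q,\infty}$ for every $q\in[1,\infty)$ with norm polynomially bounded in $t$. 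Every polynomial-in-$t$ factor produced below is absorbed into $e^{\exp C_0t^8}$.

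\emph{Step 1 (vorticity difference --- assertion (2), and its $L^1$ endpoint).} This is the heart of the matter and the only place where the patch structure is essential. Since $\delta\omega(0)=0$, the Duhamel formula for the transport--diffusion propagator $\mathcal S_\mu(t,s)$ of $\partial_t+v_\mu\cdot\nabla-\mu\Delta$ --- which enjoys the same $L^q\to L^q$ smoothing as the heat semigroup, up to constants depending on $\int_0^t\|\nabla v_\mu\|_{L^\infty}$ --- gives $\delta\omega(t)=\int_0^t\mathcal S_\mu(t,s)\big(\mu\Delta\omega(s)-\delta v\cdot\nabla\omega(s)+\partial_1\delta\theta(s)\big)\,ds$. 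The leading piece $\int_0^t\mathcal S_\mu(t,s)\mu\Delta\omega(s)\,ds$ is controlled using $\omega(s)=\mathbf 1_{\Omega(s)}\in B^{1/q}_{q,\infty}$, whence $\|\mu\Delta e^{\tau\Delta}\omega(s)\|_{L^q}\lesssim \mu\,\tau^{-1+\frac1{2q}}\|\omega(s)\|_{B^{1/q}_{q,\infty}}$; integrating $\mu(\mu(t-s))^{-1+\frac1{2q}}$ over $s\in(0,t)$ produces precisely $C_q(\mu t)^{\frac1{2q}}$, which is the claimed rate (and $(\mu t)^{1/2}$ when $q=1$). The remaining forcing terms $\partial_1\delta\theta$ and $-\delta v\cdot\nabla\omega=-\nabla\cdot(\delta v\,\omega)$ involve $\delta v,\delta\theta$, hence are small of the same order or better once those are controlled; consequently Steps 1 and 2 are closed together by one Gr\"onwall inequality, for the quantity $\sum_{q\in\{1,2,p\}}(\mu t)^{-\frac1{2q}}\|\delta\omega(t)\|_{L^q}$ coupled with that of Step 2.

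\emph{Step 2 (velocity and temperature --- assertion (1)).} One runs a coupled $L^p$ energy estimate. Testing the $\delta\theta$-equation with $|\delta\theta|^{p-2}\delta\theta$: $-\nabla\cdot(\kappa(\theta_\mu)\nabla\delta\theta)$ is dissipative since $\kappa\ge\kappa_0^{-1}$ by \eqref{Eq-2}, and the quasilinear remainder is absorbed after an integration by parts and Young's inequality using $|\kappa(\theta_\mu)-\kappa(\theta)|\le\kappa_0|\delta\theta|$ and $\nabla\theta\in L^\infty$, leaving only $\|\nabla\theta\|_{L^\infty}\|\delta v\|_{L^p}$ and harmless multiples of $\|\delta\theta\|_{L^p}$. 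Testing the $\delta v$-equation with $|\delta v|^{p-2}\delta v$: the pressure drops ($\nabla\cdot\delta v=0$), $-\mu\Delta\delta v$ is dissipative, $-\delta v\cdot\nabla v$ costs $\|\nabla v\|_{L^\infty}\|\delta v\|_{L^p}^p$, and the crucial term $\mu\Delta v=\mu\nabla^\perp\omega$ is treated by integrating by parts, $\int(\mu\Delta v)\cdot|\delta v|^{p-2}\delta v\,dx=-\mu\int\omega\,\nabla^\perp\!\cdot(|\delta v|^{p-2}\delta v)\,dx$, and writing $\nabla^\perp\!\cdot(|\delta v|^{p-2}\delta v)=|\delta v|^{p-2}\delta\omega$ plus a remainder of size $\lesssim|\delta v|^{p-2}|\nabla\delta v|$ that is absorbed into the viscous term $\mu\int|\delta v|^{p-2}|\nabla\delta v|^2$ (for $p=2$ there is no remainder, so $\int\mu\Delta v\cdot\delta v=-\mu\int\omega\,\delta\omega$). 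Thus the viscous defect is controlled by $\mu\|\omega\|_{L^\infty}\|\delta v\|_{L^p}^{p-2}\|\delta\omega\|_{L^{p/2}}$ plus lower-order pieces; inserting $\|\delta\omega\|_{L^{p/2}}\lesssim(\mu t)^{1/p}$ from Step 1 and using Young's inequality contributes a forcing of size $\sim\mu^{(p+1)/2}$ --- exactly compatible with the target rate --- plus a term $\propto\|\delta v\|_{L^p}^p$ that feeds the Gr\"onwall. Summing the two $L^p$ estimates (the $L^2$ case being done first, to supply the interpolations needed for the absorbed remainder when $p>2$) and using $\delta v(0)=\delta\theta(0)=0$ yields $\|\delta v(t)\|_{L^p}+\|\delta\theta(t)\|_{L^p}\lesssim(\mu t)^{\frac12+\frac1{2p}}e^{\exp C_0t^8}$.

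\emph{Step 3 (flows --- assertion (3)).} From $\partial_t(\Psi_\mu-\Psi)=v_\mu(t,\Psi_\mu)-v(t,\Psi)=(\delta v)(t,\Psi_\mu)+\big(v(t,\Psi_\mu)-v(t,\Psi)\big)$ and Gr\"onwall, $\|\Psi_\mu(t)-\Psi(t)\|_{L^\infty}\le\Big(\int_0^t\|\delta v(s)\|_{L^\infty}ds\Big)\exp\Big(\int_0^t\|\nabla v(s)\|_{L^\infty}ds\Big)$. Since $\delta v=\nabla^\perp\Delta^{-1}\delta\omega$, the two-dimensional Biot--Savart interpolation $\|\delta v\|_{L^\infty}\lesssim\|\delta\omega\|_{L^1}^{1/2}\|\delta\omega\|_{L^\infty}^{1/2}$, together with $\|\delta\omega\|_{L^1}\lesssim(\mu t)^{1/2}$ from Step 1 and the uniform $\|\delta\omega\|_{L^\infty}\le C_0(1+t)^7$, gives $\|\delta v(t)\|_{L^\infty}\lesssim(\mu t)^{1/4}e^{\exp C_0t^8}$; integrating in time and using $\int_0^t\|\nabla v\|_{L^\infty}\le C_0e^{C_0t^8}$ gives the stated $\|\Psi_\mu(t)-\Psi(t)\|_{L^\infty}\lesssim(\mu t)^{1/4}e^{\exp t^8}$. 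The main obstacle is clearly Step 1: the source $\mu\Delta\omega$ is a genuinely singular distribution (two derivatives of a characteristic function), on which no energy estimate bites directly, so one must isolate it as ``the heat kernel acting on a $B^{1/q}_{q,\infty}$ function,'' control how the merely Lipschitz transport by $v_\mu$ distorts that Besov regularity (precisely what the striated-regularity machinery behind Theorem \ref{Th-1} is designed for), and organize the mutual dependence of $\delta\omega$, $\delta v$, $\delta\theta$ so that the Gr\"onwall loop closes with the sharp exponents.
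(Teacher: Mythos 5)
Your overall architecture inverts the paper's and, as written, hinges on an estimate you do not prove and that the paper's toolbox does not supply. The crux is your Step 1: you need the propagator $\mathcal S_\mu(t,s)$ of $\partial_t+v_\mu\cdot\nabla-\mu\Delta$ to act on the source $\mu\Delta\omega(s)$, with $\omega(s)$ only in $B^{1/q}_{q,\infty}$, with the same smoothing rate $(\mu(t-s))^{-1+\frac1{2q}}$ as the free heat semigroup, uniformly in $\mu$. But $\mathcal S_\mu$ does not commute with $\Delta$, so you cannot literally reduce this to ``the heat kernel acting on a $B^{1/q}_{q,\infty}$ function''; you must propagate a source of Besov regularity $\tfrac1q-2\in(-2,-1]$, which lies outside the admissible range $s\in\,]-1,1[$ of the transport--diffusion estimates available here (Propositions \ref{prop2.9} and \ref{prop2.10}); the commutator machinery behind those propositions breaks down at $s=-1$. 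You acknowledge this is ``the main obstacle'' but leave it unresolved, and since Step 2's treatment of the viscous defect (via $\|\delta\omega\|_{L^{p/2}}\lesssim(\mu t)^{1/p}$) and Step 3's $\|\delta\omega\|_{L^1}\lesssim(\mu t)^{1/2}$ both feed off Step 1, the whole chain remains open. A secondary flaw in Step 2: absorbing the remainder $\mu\|\omega\|_{L^\infty}\int|\delta v|^{p-2}|\nabla\delta v|$ into the dissipation leaves a term $\mu\|\omega\|_{L^\infty}^2\|\delta v\|_{L^{p-2}}^{p-2}$, and the $L^{p-2}$ norm is not controlled for $2<p<4$.

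The paper closes the loop in the opposite order and never meets the rough source. For assertion (1) it keeps $-\mu\Delta v_\mu$ (the \emph{viscous} velocity) as the defect and bounds $\mu\|\Delta v_\mu\|_{L^1_tL^p}$ by the frequency interpolation of Proposition \ref{Delta v}, namely $\|\Delta v_\mu\|_{L^1_tL^p}\lesssim\|\omega_\mu\|_{\widetilde L^1_tB^{1/p}_{p,\infty}}^{\frac12+\frac1{2p}}\|\omega_\mu\|_{\widetilde L^1_tB^{2+1/p}_{p,\infty}}^{\frac12-\frac1{2p}}$: the low norm uses only the patch regularity $\omega_\mu^0\in B^{1/p}_{p,\infty}$ propagated with the admissible index $s=\tfrac1p$, and the high norm costs exactly one factor $\mu^{-1}$ by Proposition \ref{prop2.10}, yielding the rate $(\mu t)^{\frac12+\frac1{2p}}$. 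Assertion (2) is then \emph{deduced} from (1) by a Bernstein interpolation $\|\delta\omega\|_{L^p}\lesssim\|\delta v\|_{L^p}^{\frac1{p+1}}\|\delta\omega\|_{B^{1/p}_{p,\infty}}^{\frac p{p+1}}$, with the Besov norm bounded uniformly by persistence. To salvage your plan you would have to establish a divergence-form smoothing estimate for $\mathcal S_\mu$ (writing $\mu\Delta\omega=\mu\nabla\cdot\nabla\omega$ with $\nabla\omega\in B^{1/q-1}_{q,\infty}$), which is substantially more work than the paper's interpolation. Your Step 3 is essentially the paper's argument and is fine once $\|\delta v\|_{L^\infty}\lesssim(\mu t)^{1/4}$ is secured; the paper obtains it from the $p=2$ case of (1) by Gagliardo--Nirenberg rather than from $\|\delta\omega\|_{L^1}$.
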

\begin{Rema} The value of the rate convergence already obtained in the previous theorem is the same as in the classical Boussinesq one $\kappa(\theta)=\kappa$ see, \cite{Meddour-Zerguine} because the conditions \eqref{Eq-2} and \eqref{Assup-Th-1} are imposed to prohibit the violent nonlinearity of $\kappa(\cdot)$.   
\end{Rema}
The proof of Theorem \ref{Theo1.2} will be done by exploring \cite{Meddour-Zerguine}, namely some classical ingredients like $L^p-$estimates, the continuity of Riesz transform, complex interpolation results, and the maximal smoothing effects for the density and the vorticity. Moreover, we will also exploit the two conditions \eqref{Eq-2} and \eqref{Assup-Th-1}.\\

{\bf Organization of the paper}. In section 2, we gather the essential background freely used throughout this paper. We embark on some functions spaces and an outline about Littlewood-Paley theory, in particular, the decomposition of unity, the cut-off operators, and paradiffrential calculus following Bony and stated the definition of Besov spaces. Next, we focus on practical results like the persistence regularity for the transport-diffusion equation, the maximal smoothing effect, as well as some properties of the heat kernel. Section 3, concerns the setting of smooth vortex patch, where we start by the push-forward of a vector field in free-divergence and some related properties like the commutation with the transport operator and construct an adequate geometry to be able understanding the vortex patch topic and furnishes the stationary logarithmic estimate which connects the Lipschitz norm of the velocity and the striated regularity of its vorticity. We end this section with a package of a priori estimates for differents quantities in several functional spaces and discuss in detail the proof of Theorem \ref{Th-1} in a more general case. In section 4, we treat the inviscid limit between the two systems \eqref{Eq-3} and \eqref{Eq-4} once the viscosity parameter goes to zero and evaluate the rate of convergence.
\section{Setup and technical toool box}
All throught this work, we designate by $C$ a positive constant which may be different in each occurrence but it does not depend on the initial data. We shall sometimes alternatively use the notation $X\lesssim Y$ for an inequality of the type $X\le CY$ with $C$ is independent of $X$ and $Y$. The notation $C_0$ means a constant depending on the involved norms of the initial data.

\subsection{Function spaces} We embark this section by some definition of H\"older spaces $C^{n+\alpha}$ and Sobolev spaces of type $W^{1,p}$ which will be useful in our analysis. For $\alpha\in]0,1[$ define $C^{\alpha}$ as the set of $u\in L^\infty$ such that
\begin{equation}\label{sp:1}
\|u\|_{C^{\alpha}}=\|u\|_{L^\infty}+\sup_{x\ne y}\frac{|u(x)-u(y)|}{|x-y|^{\alpha}}<\infty.
\end{equation} 
For the limit case $\alpha=1$, the corresponding set is the Lipschitz class which denoted by $\Lip$,
\begin{equation*}
\|u\|_{\Lip}=\|u\|_{L^\infty}+\sup_{x\ne y}\frac{|u(x)-u(y)|}{|x-y|}<\infty.
\end{equation*} 
We will also make use of the space $C^{1+\alpha}$ which is the set of continuously differentiable functions $u$ such that
\begin{equation*}
\|u\|_{C^{1+\alpha}}=\|u\|_{L^\infty}+\|\nabla u\|_{C^\alpha}<\infty.
\end{equation*}
By the same way we can define generally the spaces $C^{n+\alpha}$, with $n\in\NN$ and $\alpha\in]0,1[$.\\
The Sobolev class $W^{1,p}$ for $p\in[1,\infty]$ is the set of tempered distribution $u\in\mathcal{S}'$ endowed with the norm
\begin{equation*}
\|u\|_{W^{1,p}}=\|u\|_{L^p}+\|\nabla u\|_{L^p}.
\end{equation*}
\hspace{0.5cm}Next, we outline some elements about Littlewood-Paley theory will be required in several steps. Let$(\chi,\varphi)\in\mathscr{D}(\RR^2)\times \mathscr{D}(\RR^2)$
be a radial cut-off functions be such that $\supp\chi\subset\{\xi\in\RR^2: \|\xi\|\le1\}$ and $\supp\varphi(\xi)\subset\{\xi\in\RR^2: 1/2\le\|\xi\|\leq 2\}$, so that
\begin{equation*}
\chi(\xi)+\sum_{q\ge0}\varphi(2^{-q}\xi)=1.
\end{equation*}
Through $\chi$ and $\varphi$, the Littlewood-Paley or frequency cut-off operators $(\Delta_q)_{q\ge-1}$ and $(\dot\Delta_q)_{q\ge-1}$ are defined for $u\in \mathscr{S}'(\RR^2)$   
\begin{equation*}
\Delta_{-1} u=\chi(\DD)u,\; \Delta_qu=\varphi(2^{-q}\DD)u\;\;\mbox{for}\;\;q\in\NN,\quad \dot\Delta_qu=\varphi(2^{-q}\DD)u\;\; \mbox{for}\;\; q\in\ZZ. 
\end{equation*}
where in general case $f(\DD)$ stands the pseudo-differential operator $u\mapsto\mathscr{F}^{-1}(f \mathscr{F}u)$ with constant symbol.  The lower frequencies sequence $(S_{q})_{q\ge0}$ is defined for $q\ge0$, 
\begin{equation*}
S_{q}u\triangleq\sum_{j\le q-1}\Delta_{j}u.
\end{equation*}     
In accordance of the previous properties we derive the well-known decomposition of unity
$$
u=\sum_{q\ge-1}\Delta_q u,\quad u=\sum_{q\in\ZZ}\dot\Delta_q u.
$$
The results  currently available allow us to define the inohomogeneous Besov denoted $B_{p,r}^s $ (resp. $\dot B_{p,r}^s$) and defined in the following way.
\begin{defi} For $(p,r,s)\in[1,  +\infty]^2\times\RR $, the inhomogeneous Besov spaces $B_{p,  r}^s$ (resp. homogeneous Besov spaces $\dot B_{p,  r}^s$) are defined by 
$$
B_{p,  r}^s=\{u\in \mathcal{S}'(\RR^2):\|u\|_{B_{p,  r}^s} <+\infty\}, \quad\dot B_{p,  r}^s=\{u\in  \mathcal{S}'(\RR^2)_{|\mathbb{P}}:\|u\|_{\dot B_{p,  r}^s} <+\infty\},
$$
where $\mathbb{P}$ refers to the set of polynomial functions in $\RR^2$ so that
$$
\|u\|_{B_{p,r}^s}\triangleq\left\{\begin{array}{ll}
\Big(\sum_{q\ge-1}2^{rqs}\|\Delta_q u\|_{L^{p}}^r\Big)^{1/r} & \textrm{if $r\in[1, +\infty[,$}\\
\sup_{q\ge-1}2^{qs}\Vert\Delta_q u\Vert_{L^p} & \textrm{if $r=+\infty$.}
\end{array}
\right.
$$
and 
$$
 \|u\|_{\dot B_{p,r}^s}\triangleq\left\{\begin{array}{ll}
\Big(\sum_{q\in\ZZ}2^{rqs}\|\dot\Delta_q u\|_{L^{p}}^r\Big)^{1/r} & \textrm{if $r\in[1, +\infty[,$}\\
\sup_{q\in\ZZ}2^{qs}\Vert\dot\Delta_q u\Vert_{L^p} & \textrm{if $r=+\infty$.}
\end{array}
\right.
$$
\end{defi}
In particular, the spaces $B_{2,2}^s$ coincide with the classical Sobolev spaces $H^s$, whereas for $s\in\RR_{+}\backslash\NN$ the spaces $B^s_{\infty,\infty}$ close to the well-known H\"older spaces $C^s$ defined in particular case by \eqref{sp:1}.

\hspace{0.5cm}The celebrate Bernstein's inequalities are listed in the following lemma. 
\begin{lem}\label{Bernstein} There exists a constant $C>0$ such that for $1\le a\le b\le\infty$, for every function $u$ and every $q\in\NN\cup\{-1\}$, we have
\begin{enumerate}
\item[{\bf(i)}] $\sup_{\vert\alpha\vert=k}\Vert\partial^{\alpha}S_{q}u\Vert_{L^{b}}\le C^{k}2^{q(k+2({1}/{a}-{1}/{b}))}\Vert S_{q}u\Vert_{L^{a}}$.
\item[{\bf(ii)}] $C^{-k}2^{qk}\Vert\Delta_{q}u\Vert_{L^{a}}\le\sup_{\vert\alpha\vert=k}\Vert\partial^{\alpha}\Delta_{q}u\Vert_{L^{a}}\le C^{k}2^{qk}\Vert\Delta_{q}u\Vert_{L^{a}}$.
\end{enumerate}
\end{lem}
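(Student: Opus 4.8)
The plan is to derive both inequalities from the single principle that $S_q u$ and $\Delta_q u$ are \emph{frequency localized}: by the construction of $\chi$ and $\varphi$, the function $S_qu=\sum_{j\le q-1}\Delta_j u$ has its Fourier transform supported in the ball $\{|\xi|\le 2^{q}\}$, whereas $\Delta_q u=\varphi(2^{-q}\DD)u$ has its Fourier transform supported in the annulus $\{2^{q-1}\le|\xi|\le 2^{q+1}\}$. On such sets a derivative $\partial^\alpha$ acts essentially as multiplication by a factor of size $2^{q|\alpha|}$, and the $L^a\to L^b$ gain is governed by Young's convolution inequality in dimension two; this is exactly where the exponent $2(1/a-1/b)$ is produced.

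\smallskip

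For the upper bounds, that is item (i) and the right-hand inequality of (ii), I would first reabsorb the cut-off. Choose a radial $\widetilde\chi\in\mathscr{D}(\RR^2)$ equal to $1$ on the ball of radius $1$ and supported in the ball of radius $2$, so that $S_q u=\widetilde\chi(2^{-q}\DD)S_q u$. Writing $h=\mathscr{F}^{-1}\widetilde\chi$, which is a Schwartz function, and setting $g_{2^q}(x)=2^{2q}g(2^qx)$, one has $\partial^\alpha S_q u=(\partial^\alpha h_{2^q})\star S_q u$ with $\partial^\alpha h_{2^q}=2^{q|\alpha|}(\partial^\alpha h)_{2^q}$. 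Differentiation thus produces the prefactor $2^{q|\alpha|}$, and Young's inequality with exponents $1+\tfrac1b=\tfrac1a+\tfrac1c$ gives
\[
\|\partial^\alpha S_qu\|_{L^b}\le \|\partial^\alpha h_{2^q}\|_{L^c}\,\|S_qu\|_{L^a},\qquad \|\partial^\alpha h_{2^q}\|_{L^c}=2^{q|\alpha|}\,2^{2q(\frac1a-\frac1b)}\,\|\partial^\alpha h\|_{L^c}.
\]
Since $h$ is Schwartz, $\|\partial^\alpha h\|_{L^c}\le C^{|\alpha|}$ uniformly in $c$, which yields (i) with $|\alpha|=k$; taking $a=b$ and replacing $\widetilde\chi$ by a function equal to $1$ on the annulus gives the right-hand inequality of (ii).

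\smallskip

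The delicate point is the lower bound in (ii), which genuinely uses the \emph{annular} (rather than ball) localization of $\Delta_q u$. The idea is to invert the first-order derivatives by a smooth, compactly supported multiplier. Pick $\widetilde\varphi\in\mathscr{D}(\RR^2)$ supported in a fixed annulus and equal to $1$ on $\supp\varphi$, and set $\Phi_j(\eta)=\eta_j|\eta|^{-2}\widetilde\varphi(\eta)$, which is smooth with support bounded away from the origin precisely because we are on an annulus. Using $\sum_j\eta_j^2=|\eta|^2$ one checks the reconstruction identity
\[
\widehat{\Delta_qu}(\xi)=2^{-q}\sum_{j=1}^{2}\Phi_j(2^{-q}\xi)\,\tfrac1i\,\widehat{\partial_j\Delta_qu}(\xi),
\]
so that $\Delta_q u=2^{-q}\sum_{j}\Phi_j(2^{-q}\DD)\tfrac1i\,\partial_j\Delta_q u$. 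Each $\Phi_j(2^{-q}\DD)$ is convolution with an $L^1$ kernel of norm $\|\mathscr{F}^{-1}\Phi_j\|_{L^1}$ independent of $q$, whence Young's inequality gives $2^{q}\|\Delta_qu\|_{L^a}\le C\sup_{|\alpha|=1}\|\partial^\alpha\Delta_qu\|_{L^a}$. Since $\partial_j\Delta_q u$ is again spectrally supported in the same annulus, I would iterate this estimate $k$ times; each step contributes one factor $C$ and one extra order of differentiation, producing $C^{-k}2^{qk}\|\Delta_qu\|_{L^a}\le\sup_{|\alpha|=k}\|\partial^\alpha\Delta_qu\|_{L^a}$, as claimed.

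\smallskip

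I expect the main obstacle to be exactly this lower estimate: it fails for a ball-localized block and relies on constructing the inverse multipliers $\Phi_j$, so some care is needed to ensure that their kernels lie in $L^1$ with $q$-independent norm and that the iteration keeps the constants of the geometric form $C^k$. The remaining bookkeeping—tracking the dimension-two exponent $2(1/a-1/b)$ and verifying uniform control of the Schwartz seminorms $\|\partial^\alpha h\|_{L^c}$—is routine.
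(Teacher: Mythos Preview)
Your argument is correct and is precisely the standard proof of Bernstein's inequalities (see, e.g., Lemma~2.1 in \cite{Bahouri-Chemin-Danchin}). Note, however, that the paper does not supply a proof of this lemma at all: it is stated as classical background material immediately after the definition of the dyadic blocks, with no argument given. So there is no ``paper's own proof'' to compare against; your write-up simply fills in what the authors take for granted.
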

A nothworthy consequence of Bernstein's lemma  concerns the embedding relations given by the follwoing.
\begin{prop}\label{Emb} For $(s,\widetilde{s},p,p_1,p_2,r_1,r_2)\in\RR^2\times]1,\infty[\times[1,\infty]^4$ with $\widetilde{s} \le s, p_1\le p_2$ and $r_1\le r_2$, then we have
\begin{enumerate}
\item[{\bf(i)}] $B_{p,r}^{s} \hookrightarrow B_{p,r}^{\widetilde{s}} $.  
\item[{\bf(ii)}] $B_{p_{1},r_1}^{s}(\RR^2)  \hookrightarrow B_{p_2,r_2}^{s+2(1/p_2 -1/p_1)}(\RR^2)$.
\item[{\bf(iii)}] $B_{p,\min(p,2)}^{s} \hookrightarrow W^{s,p}   \hookrightarrow B_{p,\max(p,2)}^{s}$.  
\end{enumerate}
\end{prop}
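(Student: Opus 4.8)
The plan is to reduce each inclusion to the dyadic description of the Besov norms, invoking Bernstein's inequality (Lemma~\ref{Bernstein}) for the gain in integrability and the Littlewood--Paley square function theorem for the identification with $W^{s,p}$. For assertion (i) I fix $q\ge-1$ and write $2^{q\widetilde s}\|\Delta_q u\|_{L^p}=2^{q(\widetilde s-s)}2^{qs}\|\Delta_q u\|_{L^p}$; since $\widetilde s\le s$ and $q\ge-1$, the prefactor obeys $2^{q(\widetilde s-s)}\le 2^{s-\widetilde s}$ uniformly in $q$, so taking the $\ell^r$ norm in $q$ gives $\|u\|_{B_{p,r}^{\widetilde s}}\le 2^{s-\widetilde s}\|u\|_{B_{p,r}^s}$. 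This monotonicity in the regularity index is special to the inhomogeneous scale, where $q$ is bounded below.

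For assertion (ii) I split it into a gain of integrability followed by a loss in the summation index. Applying Lemma~\ref{Bernstein}(i) with $k=0$ to the block $\Delta_q u$, whose spectrum lies in a ball of radius $\sim 2^q$, yields for $p_1\le p_2$ the bound $\|\Delta_q u\|_{L^{p_2}}\le C\,2^{2q(1/p_1-1/p_2)}\|\Delta_q u\|_{L^{p_1}}$. Multiplying by $2^{q(s+2(1/p_2-1/p_1))}$ makes the powers of $2$ cancel, leaving $2^{q(s+2(1/p_2-1/p_1))}\|\Delta_q u\|_{L^{p_2}}\le C\,2^{qs}\|\Delta_q u\|_{L^{p_1}}$; taking $\ell^{r_1}$ norms proves $B_{p_1,r_1}^s\hookrightarrow B_{p_2,r_1}^{s+2(1/p_2-1/p_1)}$. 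Finally $r_1\le r_2$ gives the sequence embedding $\ell^{r_1}\hookrightarrow\ell^{r_2}$, hence $B_{p_2,r_1}^{\sigma}\hookrightarrow B_{p_2,r_2}^{\sigma}$, and composing the two inclusions yields (ii).

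Assertion (iii) is the substantial one. I read $W^{s,p}$ as the Bessel potential space normed by $\|\langle\DD\rangle^s u\|_{L^p}$, which agrees with the classical Sobolev space for integer $s$, and invoke for $1<p<\infty$ the Littlewood--Paley square function equivalence $\|u\|_{W^{s,p}}\approx\big\|(\sum_{q\ge-1}2^{2qs}|\Delta_q u|^2)^{1/2}\big\|_{L^p}$. Writing $f_q=2^{qs}\Delta_q u$, the claim reduces to comparing $\|(\sum_q|f_q|^2)^{1/2}\|_{L^p}$ with $\|(\|f_q\|_{L^p})_q\|_{\ell^{\min(p,2)}}$ and $\|(\|f_q\|_{L^p})_q\|_{\ell^{\max(p,2)}}$, and I treat the two regimes separately. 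For $p\ge2$ the inclusion $B_{p,2}^s\hookrightarrow W^{s,p}$ comes from Minkowski's inequality in $L^{p/2}$, namely $\|\sum_q|f_q|^2\|_{L^{p/2}}\le\sum_q\|f_q\|_{L^p}^2$, while $W^{s,p}\hookrightarrow B_{p,p}^s$ follows from the pointwise bound $\sum_q|f_q|^p\le(\sum_q|f_q|^2)^{p/2}$ integrated over $\RR^2$. For $1<p\le2$ the roles reverse: $B_{p,p}^s\hookrightarrow W^{s,p}$ uses the pointwise $(\sum_q|f_q|^2)^{1/2}\le(\sum_q|f_q|^p)^{1/p}$, and $W^{s,p}\hookrightarrow B_{p,2}^s$ uses the reverse Minkowski inequality in $L^{p/2}$, valid since $p/2\le1$, namely $\sum_q\|f_q\|_{L^p}^2\le\|\sum_q|f_q|^2\|_{L^{p/2}}$. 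The main obstacle is precisely this last assertion: establishing the square function equivalence, a consequence of the Mikhlin--H\"ormander multiplier theorem which is exactly where the restriction $1<p<\infty$ enters, and correctly orienting Minkowski's inequality and its reverse in $L^{p/2}$ according to whether $p\ge2$ or $p\le2$.
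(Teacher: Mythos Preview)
The paper does not prove this proposition at all: it is stated as a standard ``noteworthy consequence of Bernstein's lemma'' and the text immediately moves on to Bony's decomposition. Your proof is correct and supplies exactly the details the paper omits. Parts (i) and (ii) are the elementary reductions to Lemma~\ref{Bernstein} that the paper implicitly has in mind; your part (iii), which goes through the Littlewood--Paley square function characterisation $W^{s,p}=F^s_{p,2}$ and the Minkowski/reverse-Minkowski dichotomy in $L^{p/2}$, is the standard route and is carried out cleanly. There is nothing to compare against, since the paper gives no argument of its own.
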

 %For $s\in\NN$, the obtained spaces are well-known Zugmund-H\"older spaces and noted sometimes by $C^s_{\star}$ equipped with the following norm
%\begin{equation*}
%\|u\|_{C^s_{\star}}=\sup_{q\ge-1}2^{qs}\|\Delta_q u\|_{L^\infty}.
%\end{equation*}
\hspace{0.5cm}Now, we state Bony's decomposition \cite{Bony} which allows us to split formally the product of two tempered distributions $u$ and $v$ into three pieces. More precisely, we have.
%%%%%%%%%%%%%%%%%%%%%%%%%%%%%%%%%%%%%%%%%%%%%%%%%%%%%%%%%%%%%%%%%%%%%%%%%%%%%%%%%%%%%%%%%%%%%%%%%%%%%%%
\begin{defi}\label{def:1} For a given $u, v\in\mathcal{S}'$ we have
 $$
uv=T_u v+T_v u+\mathscr{R}(u,v),
$$
with
$$T_u v=\sum_{q}S_{q-1}u\Delta_q v,\quad  \mathscr{R}(u,v)=\sum_{q}\Delta_qu\widetilde\Delta_{q}v  \quad\hbox{and}\quad \widetilde\Delta_{q}=\Delta_{q-1}+\Delta_{q}+\Delta_{q+1}.
$$
\end{defi}
%%%%%%%%%%%%%%%%%%%%%%%%%%%%%%%%%%%%%%%%%%%%%%%%%%%%%%%%%%%%%%%%%%%%%%%%%%%%%%%%%%%%%%%%%%%%%%%%%%%%%%%
The mixed space-time spaces are stated as follows. 
%%%%%%%%%%%%%%%%%%%%%%%%%%%%%%%%%%%%%%%%%%%%%%%%%%%%%%%%%%%%%%%%%%%%%%%%%%%%%%%%%%%%%%%%%%%%%%%%%%%%%%%
\begin{defi} Let $T>0$ and $(s,\beta,p,r)\in\RR\times[1, \infty]^3$.  We define the spaces $L^{\beta}_{T}B_{p,r}^s$ and $\widetilde L^{\beta}_{T}B_{p,r}^s$ respectively by: 
$$
L^\beta_{T}B_{p,r}^s\triangleq\Big\{u: [0,T]\to\mathcal{S}^{'}; \Vert u\Vert_{L_{T}^{\beta}B_{p, r}^{s}}=\big\Vert\big(2^{qs}\Vert \Delta_{q}u\Vert_{L^{p}}\big)_{\ell^{r}}\big\Vert_{L_{T}^{\beta}}<\infty\Big\},
$$
$$
\widetilde L^{\beta}_{T}B_{p,r}^s\triangleq\Big\{u:[0,T]\to\mathcal{S}^{'}; \Vert u\Vert_{\widetilde L_{T}^{\beta}{B}_{p, r}^{s}}=\big(2^{qs}\Vert \Delta_{q}u\Vert_{L_{T}^{\beta}L^{p}}\big)_{\ell^{r}}<\infty\Big\}.
$$
The relationship between these spaces is given by the following embeddings. Let $ \varepsilon>0,$ then 
\begin{equation}\label{embeddings}
\left\{\begin{array}{ll}
L^\beta_{T}B_{p,r}^s\hookrightarrow\widetilde L^\beta_{T}B_{p,r}^s\hookrightarrow L^\beta_{T}B_{p,r}^{s-\varepsilon} & \textrm{if  $r\geq \beta$},\\
L^\beta_{T}B_{p,r}^{s+\varepsilon}\hookrightarrow\widetilde L^\beta_{T}B_{p,r}^s\hookrightarrow L^\beta_{T}B_{p,r}^s & \textrm{if $\beta\geq r$}.
\end{array}
\right.
\end{equation}
\end{defi}
%%%%%%%%%%%%%%%%%%%%%%%%%%%%%%%%%%%%%%%%%%%%%%%%%%%%%%%%%%%%%

The following result is a consequence of paradifferential calculus which will be beneficial later.  
\begin{cor}\label{Coro:1} Given $\EE\in]0,1[$ and $X$ be a vector field be such that $X, \Div X\in C^\EE$. Then for $f$ be a Lipschitz scalar function $k\in\{1,2\}$ the following statement holds.
\begin{equation*}
\|(\partial_k X)\cdot\nabla f\|_{C^{\EE-1}}\le C\|\nabla f\|_{L^\infty}\big(\|\Div X\|_{C^\EE}+\|X\|_{C^\EE}\big).
\end{equation*}
\end{cor}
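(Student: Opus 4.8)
The plan is to expand $(\partial_k X)\cdot\nabla f$ using Bony's decomposition and estimate each of the three pieces in $C^{\EE-1}=B^{\EE-1}_{\infty,\infty}$. Write, componentwise,
\[
(\partial_k X)\cdot\nabla f=\sum_{j=1}^2(\partial_k X^j)\,\partial_j f
=\sum_{j=1}^2\Big(T_{\partial_k X^j}\partial_j f+T_{\partial_j f}\partial_k X^j+\mathscr{R}(\partial_k X^j,\partial_j f)\Big).
\]
The term $T_{\partial_j f}\partial_k X^j$ is harmless: since $\partial_j f\in L^\infty$ the paraproduct operator $T_{\partial_j f}$ maps $B^{\EE-1}_{\infty,\infty}$ to itself, and $\partial_k X^j\in C^{\EE-1}$ because $X\in C^\EE$ (differentiation lowers the Besov index by one), so this piece is bounded by $C\|\nabla f\|_{L^\infty}\|X\|_{C^\EE}$. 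Similarly the remainder $\mathscr{R}(\partial_k X^j,\partial_j f)$ is controlled by $\|\partial_k X^j\|_{B^{\EE-1}_{\infty,\infty}}\|\partial_j f\|_{B^0_{\infty,\infty}}\lesssim\|\nabla f\|_{L^\infty}\|X\|_{C^\EE}$, using that the sum defining $\mathscr{R}$ converges in $B^{\EE-1}_{\infty,\infty}$ precisely because $(\EE-1)+0<0$ would be the problematic regime — here actually the relevant sum has total index $\EE-1>-1$, wait, the standard bound needs $s_1+s_2>0$, so I must instead pair regularities as $\|\partial_k X^j\|_{C^{\EE-1}}$ does not suffice directly; the cleaner route is to keep $X^j\in C^\EE$ and write $\mathscr{R}(\partial_k X^j,\partial_j f)=\partial_k\mathscr{R}(X^j,\partial_j f)-\mathscr{R}(X^j,\partial_k\partial_j f)$, but the second term is bad. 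So for the remainder I instead use $\mathscr{R}(\partial_k X^j,\partial_j f)$ with $\partial_k X^j\in B^{\EE-1}_{\infty,\infty}$ and $\partial_j f\in B^{0}_{\infty,\infty}$: the remainder operator $\mathscr{R}:B^{s_1}_{\infty,\infty}\times B^{s_2}_{\infty,\infty}\to B^{s_1+s_2}_{\infty,\infty}$ is bounded when $s_1+s_2>0$, which fails here ($\EE-1+0<0$). Hence the remainder must be handled together with the divergence structure.

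This is where the hypothesis $\Div X\in C^\EE$ enters, and it is the crux of the argument. The key identity is
\[
(\partial_k X)\cdot\nabla f=\partial_k\big(X\cdot\nabla f\big)-X\cdot\nabla(\partial_k f)
=\partial_k\big(X\cdot\nabla f\big)-\Div\big((\partial_k f)X\big)+(\partial_k f)\,\Div X,
\]
where in the last step I used $X\cdot\nabla(\partial_k f)=\Div((\partial_k f)X)-(\partial_k f)\Div X$. Now $(\partial_k f)\Div X\in C^\EE\cdot L^\infty$ has $C^{\EE-1}$-norm (indeed $L^\infty$-norm) bounded by $\|\nabla f\|_{L^\infty}\|\Div X\|_{C^\EE}$ — more than enough. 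For the first two terms I use that $X\cdot\nabla f$ and $(\partial_k f)X$ are products of an $L^\infty$ function with a $C^\EE$ function, hence lie in $C^\EE\hookrightarrow B^\EE_{\infty,\infty}$ with norm $\lesssim\|\nabla f\|_{L^\infty}\|X\|_{C^\EE}$ (here one needs the simple bilinear estimate $\|gh\|_{C^\EE}\lesssim\|g\|_{L^\infty}\|h\|_{C^\EE}+\|h\|_{L^\infty}\|g\|_{C^\EE}$, applied with $g=\partial_k f$ or $g$ a component of $\nabla f$ and $h$ a component of $X$). Applying $\partial_k$ or $\Div$ costs one derivative, mapping $B^\EE_{\infty,\infty}\to B^{\EE-1}_{\infty,\infty}=C^{\EE-1}$, so both terms are bounded by $C\|\nabla f\|_{L^\infty}\|X\|_{C^\EE}$.

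Collecting the three contributions gives
\[
\|(\partial_k X)\cdot\nabla f\|_{C^{\EE-1}}\le C\|\nabla f\|_{L^\infty}\big(\|X\|_{C^\EE}+\|\Div X\|_{C^\EE}\big),
\]
which is the claim. The main obstacle, as indicated above, is that the naive paraproduct/remainder splitting of $(\partial_k X)\cdot\nabla f$ does not close because the remainder term sits at a negative total regularity; the resolution is to avoid differentiating $X$ until after forming the scalar product $X\cdot\nabla f$, and to trade the stray term $X\cdot\nabla\partial_k f$ for a divergence plus a $\Div X$ term — this is exactly the algebraic manipulation that makes the hypothesis $\Div X\in C^\EE$ usable and is the whole point of the corollary.
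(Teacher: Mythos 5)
Your diagnosis of the obstruction is correct: in the Bony splitting the remainder $\mathscr{R}(\partial_k X^j,\partial_j f)$ sits at total regularity $(\EE-1)+0<0$ and cannot be estimated directly (and the paraproduct terms, including the one you did not discuss, $T_{\partial_k X^j}\partial_j f$, are indeed routine). But the fix you propose contains a genuine gap. The claim that $X\cdot\nabla f$ and $(\partial_k f)X$ lie in $C^{\EE}$ with norm $\lesssim\|\nabla f\|_{L^\infty}\|X\|_{C^{\EE}}$ is false: the tame estimate $\|gh\|_{C^{\EE}}\lesssim\|g\|_{L^\infty}\|h\|_{C^{\EE}}+\|h\|_{L^\infty}\|g\|_{C^{\EE}}$ requires \emph{both} factors to belong to $C^{\EE}$, and here $g=\partial_k f$ is merely bounded ($f$ is only Lipschitz), so the second term $\|X\|_{L^\infty}\|\nabla f\|_{C^{\EE}}$ is not controlled. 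A concrete counterexample to the intermediate claim: take $X=(1,0)$ and $f$ Lipschitz but not $C^{1+\EE}$; then $X\cdot\nabla f=\partial_1 f\notin C^{\EE}$, even though the corollary is trivially true in that case. In Bony language the culprit is the paraproduct $T_{X^j}\partial_j f$ (low frequencies of $X$ against high frequencies of $\nabla f$), which is only in $B^{0}_{\infty,\infty}$, so after applying $\partial_k$ your identity yields only $(\partial_k X)\cdot\nabla f\in C^{-1}$ --- this is essentially the weak definition \eqref{wd} of $\partial_X$, not the claimed gain to $C^{\EE-1}$.

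The correct resolution, and the one the paper uses, is to perform your integration by parts \emph{only inside the remainder}, where the two factors are spectrally comparable. You actually wrote down half of it before abandoning it: starting from $\mathscr{R}(\partial_k X^j,\partial_j f)=\partial_k\mathscr{R}(X^j,\partial_j f)-\mathscr{R}(X^j,\partial_k\partial_j f)$, the ``bad'' second term is cured by summing over $j$ and integrating by parts once more in $\partial_j$, which gives $\sum_j\mathscr{R}(X^j,\partial_j\partial_k f)=\sum_j\partial_j\mathscr{R}(X^j,\partial_k f)-\mathscr{R}(\Div X,\partial_k f)$. Every remainder on the right pairs $X^j$ or $\Div X$ in $C^{\EE}$ against a component of $\nabla f$ in $B^{0}_{\infty,\infty}$, total regularity $\EE>0$, so it lands in $B^{\EE}_{\infty,\infty}$ and survives one exterior derivative into $C^{\EE-1}$; this is precisely where the hypothesis $\Div X\in C^{\EE}$ enters. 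With the two paraproducts handled as you indicated, this closes the estimate.
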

\begin{proof} Exploring Bony's decomposition stated in Definition \eqref{def:1} to write 
\begin{equation}\label{eq:Ch-Z}
(\partial_k X)\cdot\nabla f=\sum_{i=1}^{2} T_{\partial_k X^i}\partial_i f+T_{\partial_i f}\partial_k X^i+\mathscr{R}(\partial_k X^i,\partial_i f).
\end{equation}
Moreover by  definition  we have 
\begin{equation*}
\mathscr{R}(\partial_k X^i,\partial_i f)=\sum_{q\geq-1} \Delta_q\partial_k X^i \widetilde{\Delta}_q \partial_i f.
\end{equation*}
Using the fact $\Delta_q\partial_k X^i \widetilde{\Delta}_q \partial_i f =\partial_k(\Delta_q X^i \widetilde{\Delta}_q \partial_i f)-\Delta_q X^i \widetilde{\Delta}_q \partial_k \partial_i f$ we get 
\begin{equation*}
\mathscr{R}(\partial_k X^i,\partial_i f)= \partial_k \big( \sum_{q\geq-1}(\Delta_q X^i \widetilde{\Delta}_q \partial_i f \big)-\sum_{q\geq-1}\Delta_q X^i \widetilde{\Delta}_q \partial_k \partial_i f .
\end{equation*} 
It follows that
\begin{equation*}
\mathscr{R}(\partial_k X^i,\partial_i f)= \partial_k \big( \sum_{q\geq-1}(\Delta_q X^i \widetilde{\Delta}_q \partial_i f \big)- \partial_i \big( \sum_{q\geq-1}\Delta_q X^i \widetilde{\Delta}_q \partial_k  f\big) + \sum_{q\geq-1} \Delta_q \partial_i X^i  \widetilde{\Delta}_q \partial_k f .
\end{equation*} 
Plug the last estimate in \eqref{eq:Ch-Z} we find 
\begin{equation*}\label{eq:Ch-Z}
(\partial_k X)\cdot\nabla f=\sum_{i=1}^{2} \Big( T_{\partial_k X^i}\partial_i f+T_{\partial_i f}\partial_k X^i+\partial_k \mathscr{R}( X^i,\partial_i f)- \partial_i \mathscr{R}(X^i,\partial_k  f) \Big) + \mathscr{R}(\Div X,\partial_k) .
\end{equation*}
Taking the $C^{\EE -1}-$ norm  to above equation after applying Lemma 2  page 6 in \cite{HH}  with $s=\EE-1 $ infer that 
\begin{eqnarray*}
\|(\partial_k X)\cdot\nabla f\|_{C^{\EE -1}} \lesssim  \|\nabla f\|_{L^\infty} \| X\|_{C^{\EE }} + \|\nabla f\|_{L^\infty} \|\Div X\|_{C^{\EE }}.
\end{eqnarray*}
This completes the proof of the Corollary \label{Coro:1}.

\end{proof}

%%%%%%%%%%%%%%%%%%%%%%%%%%%%%%%%%%%%%%%%%%%%%%%%%%%%%%%%%%%%%%

%%%%%%%%%%%%%%%%%%%%%%%%%%%%%%%%%%%%%%%%%%%%%%%%%%%%%%%%%%%%%%%%%%%%%%%%%%%%%%%%%%%%%%%%%%%%%%%%%%%%%%%
Accordingly, we have the following interpolation result.
%%%%%%%%%%%%%%%%%%%%%%%%%%%%%%%%%%%%%%%%%%%%%%%%%%%%%%%%%%%%%%%%%%%%%%%%%%%%%%%%%%%%%%%%%%%%%%%%%%%%%%% 
\begin{cor}
Let $T>0,\; s_1<s<s_2$ and $\zeta\in(0, 1)$ such that $s=\zeta s_1+(1-\zeta)s_2$. Then we have
\begin{equation}\label{m1}
\Vert u\Vert_{\widetilde L_{T}^{a}{B}_{p, r}^{s}}\le C\Vert u\Vert_{\widetilde L_{T}^{a}{B}_{p, \infty}^{s_1}}^{\zeta}\Vert u\Vert_{\widetilde L_{T}^{a}{B}_{p, \infty}^{s_2}}^{1-\zeta}.
\end{equation}
\end{cor}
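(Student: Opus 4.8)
The plan is to reduce \eqref{m1} to an elementary convexity inequality for weighted $\ell^r$ sequences. First I would set $c_q \triangleq \|\Delta_q u\|_{L^a_T L^p}$ for $q\ge-1$, so that the three norms appearing in \eqref{m1} become $\|u\|_{\widetilde L^a_T B^s_{p,r}}=\big\|(2^{qs}c_q)_q\big\|_{\ell^r}$, $A\triangleq\|u\|_{\widetilde L^a_T B^{s_1}_{p,\infty}}=\sup_{q\ge-1}2^{qs_1}c_q$ and $B\triangleq\|u\|_{\widetilde L^a_T B^{s_2}_{p,\infty}}=\sup_{q\ge-1}2^{qs_2}c_q$. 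If $A=0$ or $B=0$ then $u=0$ and the inequality is trivial, so I may assume $0<A,B<\infty$. The point that makes this transplantation to the Chemin--Lerner spaces harmless is that the time-Lebesgue norm is taken inside each $c_q$, before the $\ell^r$ summation, so the estimate is purely about a numerical sequence.

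Next I would split the $\ell^r$ sum at an integer threshold $N$ to be chosen. For $q\le N$ use $c_q\le A\,2^{-qs_1}$, so that $2^{qs}c_q\le A\,2^{q(s-s_1)}$; for $q>N$ use $c_q\le B\,2^{-qs_2}$, so that $2^{qs}c_q\le B\,2^{q(s-s_2)}$. Because $s-s_1>0$ and $s-s_2<0$, both geometric series $\sum_{q\le N}2^{rq(s-s_1)}$ and $\sum_{q>N}2^{rq(s-s_2)}$ converge, with sums comparable to $2^{rN(s-s_1)}$ and $2^{rN(s-s_2)}$ respectively; hence
\begin{equation*}
\big\|(2^{qs}c_q)_q\big\|_{\ell^r}\le C\big(A\,2^{N(s-s_1)}+B\,2^{N(s-s_2)}\big),
\end{equation*}
with $C=C(r,s_1,s,s_2)$.

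Then I would optimise in $N$: taking $N$ equal to the integer part of $(s_2-s_1)^{-1}\log_2(B/A)$ balances the two summands up to a universal constant and yields $\big\|(2^{qs}c_q)_q\big\|_{\ell^r}\le C\,A^{(s_2-s)/(s_2-s_1)}\,B^{(s-s_1)/(s_2-s_1)}$. Finally, the hypothesis $s=\zeta s_1+(1-\zeta)s_2$ gives $s_2-s=\zeta(s_2-s_1)$ and $s-s_1=(1-\zeta)(s_2-s_1)$, so the exponents are exactly $\zeta$ and $1-\zeta$, and \eqref{m1} follows.

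I do not expect a genuine obstacle: this is the classical interpolation (convexity) estimate for Besov norms, equivalently the embedding $B^{s_1}_{p,\infty}\cap B^{s_2}_{p,\infty}\hookrightarrow B^{s}_{p,1}\hookrightarrow B^{s}_{p,r}$, and it passes to $\widetilde L^a_T B^s_{p,r}$ verbatim. The only care needed is the rounding of the optimal cutoff $N$ to $\ZZ$ (which costs a bounded multiplicative factor) and the trivial degenerate cases $A=0$ or $B=0$.
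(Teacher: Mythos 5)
Your argument is correct and is the standard proof of this convexity inequality; the paper itself states the corollary without proof, treating it as a classical consequence of the dyadic characterization, and your splitting-at-$N$-plus-optimization argument is exactly the expected justification. The only micro-detail left implicit is that in the inhomogeneous setting the optimal threshold must be truncated to $N\ge-1$ (when $B\lesssim A$ one simply uses the high-frequency bound for all $q\ge-1$, which still yields the product form since then $B\le CA$), but this is subsumed by the rounding issue you already flagged.
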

%%%%%%%%%%%%%%%%%%%%%%%%%%%%%%%%%%%%%%%%%%%%%%%%%%%%%%%%%%%%%%%%%%%%%%%%%%%%%%%%%%%%%%%%%%%%%%%%%%%%%%%
%At the end of this subsection, we furnish a Theorem which treats the action of composition law with a smooth function in the Besov spaces and its proof can be found in \cite{Alinhac-Gerard}.
%\begin{Theo}\label{thm2.8}
%Let $G \in C^{[s]+2}$, with $G(0)=0$ and $s\in[0,\infty]$. Assume that $\theta \in B^{s}_{p,r} \cap L^{\infty} ,$ with $(p,r) \in [1,+\infty]^2$, then $G\circ \theta \in B^{s}_{p,r}$ and satisfying
%$$\|G \circ \theta\|_{B^{s}_{p,r}} \leq C(s)\sup_{|y|\leq C\|\theta\|_{L^\infty}}\|G^{[s]+2}(y)\|_{L^\infty} \|\theta\|_{B^{s}_{p,r}}.$$
%\end{Theo}
\subsection{Practical results} This subsection motivates by some preparatory results freely used throughout our analysis. Let us denote that the most results relative to the system depend mainly on a priori estimates in Besov space for the following transport-diffusion equation.
\begin{equation}\label{PC1.0}
\left\{ \begin{array}{ll}
\partial_{t}a + v\cdot\nabla a -\mu\Delta a=g,&\\ 
a_{\mid t=0}=a^{0}. 
\end{array} 
\right.
\end{equation}  
We start with the persistence of Besov regularity for \eqref{PC1.0} whose proof may be found for example in  \cite{Bahouri-Chemin-Danchin}
\begin{prop}\label{prop2.9}
Let $(s,r,p) \in ]-1, 1[ \times[1, \infty]^{2}$ and $v$ be a smooth vector field in free-divergence. Assume that $(a^{0},g)\in B_{p,r}^{s}\times L_{\loc}^{1}(\mathbb{R}_+; B_{p,r}^{s})$. Then for every smooth solution $a$ of $\eqref{PC1.0}$ and $t\geq 0$ we have
\begin{align*}
\Vert a(t) \Vert_{B_{p,r}^{s}}\leq Ce^{CV(t)}\left( \Vert a^{0}\Vert_{B_{p,r}^{s}}+ \int_0^{t}e^{-CV(\tau)}\Vert g(\tau)\Vert_{B_{p,r}^{s}}d\tau\right), 
\end{align*}
with the notation 
\begin{align*}
V(t)=\int_0^{t}\Vert\nabla v(\tau) \Vert_{L^{\infty}}d\tau,
\end{align*}
where $C=C(s)$ being a positive constant.
\end{prop}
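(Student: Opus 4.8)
The plan is the classical Littlewood--Paley/commutator route, so I shall only lay out the structure; since $a$ is assumed smooth, all the manipulations below are legitimate. First I would localize in frequency: applying $\Delta_q$ to \eqref{PC1.0} gives
$$\partial_t\Delta_q a+v\cdot\nabla\Delta_q a-\mu\Delta\Delta_q a=\Delta_q g-R_q,\qquad R_q:=\Delta_q(v\cdot\nabla a)-v\cdot\nabla\Delta_q a=[\Delta_q,v\cdot\nabla]a.$$
Then I would run an $L^p$ energy estimate on this localized equation. Multiplying by $|\Delta_q a|^{p-2}\Delta_q a$ and integrating over $\RR^2$, the transport term drops since $\int_{\RR^2}v\cdot\nabla\Delta_q a\,|\Delta_q a|^{p-2}\Delta_q a=\frac1p\int_{\RR^2}v\cdot\nabla|\Delta_q a|^p=0$ by $\Div v=0$, while the dissipative term $-\mu\int_{\RR^2}\Delta\Delta_q a\,|\Delta_q a|^{p-2}\Delta_q a$ is non-negative (for $p=\infty$ one invokes the $L^\infty$-contractivity of the heat flow instead). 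After dividing by $\|\Delta_q a\|_{L^p}^{p-1}$ and integrating in time this produces
$$\|\Delta_q a(t)\|_{L^p}\le\|\Delta_q a^0\|_{L^p}+\int_0^t\big(\|\Delta_q g(\tau)\|_{L^p}+\|R_q(\tau)\|_{L^p}\big)\,d\tau;$$
the smoothing factor $e^{-c\mu 2^{2q}(t-\tau)}$ is simply discarded here, as it is not needed for the present statement (it is kept in the maximal-smoothing estimates used elsewhere).

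The heart of the argument is the commutator estimate
$$2^{qs}\|R_q(\tau)\|_{L^p}\le C\,c_q(\tau)\,\|\nabla v(\tau)\|_{L^\infty}\,\|a(\tau)\|_{B_{p,r}^{s}},\qquad \big\|(c_q(\tau))_q\big\|_{\ell^r}=1,$$
valid precisely for $s\in\,]-1,1[$. I would prove it by decomposing $v\cdot\nabla a$ \`a la Bony (Definition \ref{def:1}) into $\sum_i\big(T_{v^i}\partial_i a+T_{\partial_i a}v^i+\mathscr R(v^i,\partial_i a)\big)$ and bounding the three contributions to $[\Delta_q,v\cdot\nabla]$ via the frequency-support properties of the blocks, Bernstein's inequalities (Lemma \ref{Bernstein}), and a first-order Taylor expansion of $S_{q'-1}v$ for the paraproduct $T_v\nabla a$; the restriction $s<1$ is what controls the $T_{\partial_i a}v^i$ term and $s>-1$ the remainder term, where one also uses $\Div v=0$ to rewrite $\mathscr R(v^i,\partial_i a)=\partial_i\mathscr R(v^i,a)$. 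This computation is standard, so I would simply cite \cite{Bahouri-Chemin-Danchin} rather than reproduce it; it is the only genuine obstacle in the proof.

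Finally I would assemble. Multiplying the time-integrated bound by $2^{qs}$, taking the $\ell^r(\ZZ)$-norm in $q$, exchanging $\|\cdot\|_{\ell^r}$ with $\int_0^t$ by Minkowski's inequality, and inserting the commutator estimate gives the scalar integral inequality
$$\|a(t)\|_{B_{p,r}^{s}}\le\|a^0\|_{B_{p,r}^{s}}+\int_0^t\|g(\tau)\|_{B_{p,r}^{s}}\,d\tau+C\int_0^t\|\nabla v(\tau)\|_{L^\infty}\,\|a(\tau)\|_{B_{p,r}^{s}}\,d\tau.$$
I would close with a weighted Gronwall argument: writing $R(t)$ for the sum of the last two integrals, one has $R'(\tau)\le\|g(\tau)\|_{B_{p,r}^{s}}+C\|\nabla v(\tau)\|_{L^\infty}\big(\|a^0\|_{B_{p,r}^{s}}+R(\tau)\big)$, whence $\frac{d}{dt}\big(e^{-CV(t)}R(t)\big)\le e^{-CV(t)}\|g(t)\|_{B_{p,r}^{s}}+C\|\nabla v(t)\|_{L^\infty}e^{-CV(t)}\|a^0\|_{B_{p,r}^{s}}$; integrating from $0$ to $t$ and substituting back into $\|a(t)\|_{B_{p,r}^{s}}\le\|a^0\|_{B_{p,r}^{s}}+R(t)$ yields exactly the claimed inequality. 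The endpoint cases $p=\infty$ or $r=\infty$ are obtained by replacing the $\ell^r$-sum by a supremum and the $L^p$ identity by a maximum-principle argument, with no new ideas.
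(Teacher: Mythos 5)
Your proposal is correct and follows exactly the standard Littlewood--Paley/commutator argument that the paper itself invokes by citation to \cite{Bahouri-Chemin-Danchin} (frequency localization, $L^p$ estimate on each block with the dissipation discarded by positivity, the commutator bound valid for $s\in\,]-1,1[$, Minkowski, and weighted Gronwall). Nothing further is needed.
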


The statement of maximal regularity for \eqref{PC1.0} in mixed space-time Besov space in given by the following result. For its proof, see \cite{Bahouri-Chemin-Danchin,Hmidi-Keraani2}.
\begin{prop}\label{prop2.10}
Let $(s,p_1,p_2,r)\in ]-1, 1[\times[1,\infty]^{3}$ and $v$ be a free-divergence vector field belongs to $L_{loc}^{1}(\mathbb{R}_+;\Lip)$ then there exists a constant $C\geq0$, so that for every smooth solution $a$ of \eqref{PC1.0} we have for all $ t\geq 0 $
\begin{equation*}
\mu^{\frac{1}{r}}\Vert a\Vert_{\widetilde{L}_t^{r}B_{p_{1},p_{2}}^{{s+\frac{2}{r}}}}\leq Ce^{CV(t)}(1 + \mu t)^{\frac{1}{r}}\Big(\Vert a^{0}\Vert_{B_{p_{1},p_{2}}^{{s}}} + \Vert g\Vert_{L_t^{1}B_{p_1,p_2}^{{s}}}\Big).
\end{equation*}
\end{prop}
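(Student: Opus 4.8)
The plan is to run a Littlewood--Paley energy argument on the dyadic blocks of $a$, combine it with the smoothing effect of the heat flow and a commutator bound for the transport term, and then close the estimate with the persistence bound of Proposition \ref{prop2.9}. The first step would be to localize in frequency: applying $\Delta_q$ to \eqref{PC1.0} gives
\begin{equation*}
\partial_{t}\Delta_q a + v\cdot\nabla\Delta_q a - \mu\Delta\Delta_q a = \Delta_q g - [\Delta_q, v\cdot\nabla]a =: g_q .
\end{equation*}
Testing against $|\Delta_q a|^{p_1-2}\Delta_q a$ and integrating over $\RR^2$, the transport term drops out since $\Div v=0$, and the diffusion term is bounded below by the generalized Bernstein inequality: for $q\ge0$ the Fourier support of $\Delta_q a$ lies in an annulus of size $2^q$, so $-\int_{\RR^2}|\Delta_q a|^{p_1-2}\Delta_q a\,\Delta\Delta_q a\,dx\ge c\,2^{2q}\|\Delta_q a\|_{L^{p_1}}^{p_1}$ for some $c>0$ (after the customary regularization, valid for every $p_1\in[1,\infty]$). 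Dividing out $\|\Delta_q a\|_{L^{p_1}}^{p_1-1}$ this yields, for $q\ge0$,
\begin{equation*}
\tfrac{d}{dt}\|\Delta_q a\|_{L^{p_1}} + c\,\mu\,2^{2q}\|\Delta_q a\|_{L^{p_1}} \le \|g_q\|_{L^{p_1}},
\end{equation*}
and the same inequality without the dissipative term for $q=-1$, since the low-frequency block carries no spectral gap. Gronwall's lemma then gives, for $q\ge0$,
\begin{equation*}
\|\Delta_q a(t)\|_{L^{p_1}} \le e^{-c\mu 2^{2q}t}\|\Delta_q a^{0}\|_{L^{p_1}} + \int_0^t e^{-c\mu 2^{2q}(t-\tau)}\|g_q(\tau)\|_{L^{p_1}}\,d\tau ,
\end{equation*}
and the analogue with kernel $\equiv 1$ for $q=-1$.

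Next I would take the $L^r$ norm in time. By Young's convolution inequality, $\|\Delta_q a\|_{L^r_tL^{p_1}}$ is controlled by $\|e^{-c\mu 2^{2q}\cdot}\|_{L^r(0,t)}\big(\|\Delta_q a^0\|_{L^{p_1}}+\|g_q\|_{L^1_tL^{p_1}}\big)$. For $q\ge0$ one has $\|e^{-c\mu 2^{2q}\cdot}\|_{L^r(0,t)}\le C(\mu 2^{2q})^{-1/r}$, so that multiplying by $\mu^{1/r}2^{q(s+2/r)}$ leaves a clean factor $2^{qs}$ in front of the data; for $q=-1$ the kernel contributes only $\|1\|_{L^r(0,t)}=t^{1/r}$, which after multiplication produces $\mu^{1/r}t^{1/r}=(\mu t)^{1/r}$. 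For the source, the piece $\Delta_q g$ gives $\|g\|_{L^1_tB^s_{p_1,p_2}}$ once summed, while for the commutator I would use the classical estimate, valid precisely for $s\in\,]-1,1[$,
\begin{equation*}
\|[\Delta_q, v\cdot\nabla]a(\tau)\|_{L^{p_1}} \le C\,c_q(\tau)\,2^{-qs}\,\|\nabla v(\tau)\|_{L^\infty}\,\|a(\tau)\|_{B^s_{p_1,p_2}},\qquad \|(c_q(\tau))_q\|_{\ell^{p_2}}=1,
\end{equation*}
and bound $\|a(\tau)\|_{B^s_{p_1,p_2}}$ by Proposition \ref{prop2.9}, i.e. by $Ce^{CV(\tau)}\big(\|a^0\|_{B^s_{p_1,p_2}}+\|g\|_{L^1_\tau B^s_{p_1,p_2}}\big)$ (equivalently, this a priori bound can be extracted from Step 1 with $r=\infty$).

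Finally I would take the $\ell^{p_2}$ norm in $q$, use $\int_0^t\|\nabla v(\tau)\|_{L^\infty}e^{CV(\tau)}\,d\tau\le e^{CV(t)}$ to absorb the commutator contribution into $e^{CV(t)}$, and the elementary inequalities $1\le(1+\mu t)^{1/r}$ and $(\mu t)^{1/r}\le(1+\mu t)^{1/r}$ to put the $q\ge0$ and $q=-1$ contributions on the same footing. This produces exactly
\begin{equation*}
\mu^{1/r}\|a\|_{\widetilde L^r_tB^{s+2/r}_{p_1,p_2}} \le Ce^{CV(t)}(1+\mu t)^{1/r}\big(\|a^0\|_{B^s_{p_1,p_2}}+\|g\|_{L^1_tB^s_{p_1,p_2}}\big).
\end{equation*}

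The main obstacle I anticipate is the bookkeeping: extracting the sharp factor $\mu^{1/r}2^{2q/r}$ from the heat kernel while recognizing that the undecaying low block $q=-1$ is precisely what forces the $(1+\mu t)^{1/r}$ loss, and handling the time-dependent $\ell^{p_2}$ sequences $(c_q(\tau))_q$ coming from the commutator uniformly in $\tau$ (done via Minkowski's inequality together with the $L^\infty_tB^s$ bound of Step 2). The endpoint cases $p_1,p_2\in\{1,\infty\}$ additionally require the usual mollification of the energy identity and a limiting argument in the generalized Bernstein inequality.
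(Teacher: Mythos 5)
Your argument is correct and is essentially the proof given in the references the paper cites for this proposition (Bahouri--Chemin--Danchin and Hmidi--Keraani): dyadic localization, the $L^{p_1}$ energy estimate with the generalized Bernstein inequality, Young's inequality in time to extract the factor $\mu^{1/r}2^{2q/r}$ (with the undamped block $q=-1$ producing the $(1+\mu t)^{1/r}$ loss), the classical commutator estimate valid for $s\in\,]-1,1[$, and Proposition \ref{prop2.9} to close the commutator contribution inside $e^{CV(t)}$. The paper itself does not reproduce a proof, so there is nothing further to compare; your flagged endpoint issues ($p_1\in\{1,\infty\}$) are handled in those references exactly as you indicate.
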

 
Due to $\Div v=0$, we derive via stream function the Biot-Savart $v=\nabla^{\perp}\Delta^{-1}\omega$, combined with Cald\'eron-Zygmund analysis, we may deduce
\begin{prop}\label{C.Z}
Let $p\in ]1, +\infty[$ and $v$ be a free-divergence vector field whose vorticity $\omega \in L^{p}$. Then $\nabla v \in L^{p}$ and  
\begin{align*}
\Vert \nabla v\Vert_{L^{p}}\leq C \frac{p^{2}}{p-1}\Vert \omega\Vert_{L^{p}}.
\end{align*}
with $ C $ being a universal constant.
\end{prop}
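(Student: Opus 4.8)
The plan is to realize $\nabla v$ as a fixed array of second-order Riesz transforms acting on $\omega$ and then invoke Calder\'on--Zygmund theory, keeping careful track of the $p$-dependence of the constants. First I would use the Biot--Savart law: since $\Div v=0$, we have $v=\nabla^\perp\Delta^{-1}\omega$, that is $v^1=-\partial_2\Delta^{-1}\omega$ and $v^2=\partial_1\Delta^{-1}\omega$, so that every entry of $\nabla v$ has the form $\partial_j v^k=\pm\,\partial_j\partial_\ell\Delta^{-1}\omega=\mp R_jR_\ell\,\omega$, where $R_j=\partial_j(-\Delta)^{-1/2}$ denotes the $j$-th Riesz transform on $\RR^2$. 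Hence $\mathcal T:\omega\mapsto\nabla v$ is a convolution operator whose Fourier multiplier matrix $(m_{jk})$ satisfies $\sum_{j,k}|m_{jk}(\xi)|^2=1$; by Plancherel (equivalently, by one integration by parts exploiting $\Div v=0$, which gives $\int_{\RR^2}|\nabla v|^2=\int_{\RR^2}\omega^2$) this yields the endpoint identity $\|\nabla v\|_{L^2}=\|\omega\|_{L^2}$, i.e. operator norm exactly $1$ on $L^2$.

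Next I would note that the convolution kernel of $\mathcal T$ is, away from the origin, smooth on $\RR^2$, homogeneous of degree $-2$, and of vanishing mean over circles centred at the origin; hence it is a standard Calder\'on--Zygmund kernel satisfying H\"ormander's regularity condition with a universal constant. Combined with the $L^2$ bound, the Calder\'on--Zygmund theorem gives that $\mathcal T$ is of weak type $(1,1)$ with a universal constant, and Marcinkiewicz interpolation between this weak $(1,1)$ estimate and the $L^2$ estimate produces, for $1<p\le2$, the bound $\|\nabla v\|_{L^p}\le \frac{C}{p-1}\|\omega\|_{L^p}$ with $C$ universal. For the range $2\le p<\infty$ I would argue by duality: since $R_j^\star=-R_j$, the adjoint $\mathcal T^\star$ is again an operator of exactly the same type, so applying the previous step to $\mathcal T^\star$ on $L^{p'}$ with $p'=p/(p-1)\in(1,2]$ gives $\|\mathcal T\|_{L^p\to L^p}=\|\mathcal T^\star\|_{L^{p'}\to L^{p'}}\le C/(p'-1)=C(p-1)\le Cp$.

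Putting the two ranges together, for every $p\in(1,\infty)$ one obtains
\[
\|\nabla v\|_{L^p}\le C\,\max\!\Big(p,\frac{1}{p-1}\Big)\,\|\omega\|_{L^p}\le C\,\frac{p^2}{p-1}\,\|\omega\|_{L^p},
\]
the last inequality holding because $p^2\ge p(p-1)$ when $p\ge2$ and $p^2\ge1$ when $1<p\le2$, which is the asserted estimate with $C$ universal. The routine parts (the kernel computation and the classical weak $(1,1)$ bound) present no difficulty; the only point requiring real care is making the $p$-dependence of the Marcinkiewicz constant explicit near $p=1$ (and, by duality, near $p=\infty$), so that the final constant is genuinely of size $p^2/(p-1)$ rather than merely finite. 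Alternatively, one could quote a sharp form of the $L^p$ operator norms of the Riesz transforms directly from the literature and compose them, reaching the same conclusion with perhaps a better numerical constant.
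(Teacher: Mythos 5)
Your proof is correct and takes exactly the approach the paper intends: the paper states this proposition without proof, merely invoking the Biot--Savart law $v=\nabla^{\perp}\Delta^{-1}\omega$ and ``Calder\'on--Zygmund analysis,'' and your argument (second-order Riesz transforms, the $L^2$ identity, weak $(1,1)$ plus Marcinkiewicz interpolation near $p=1$, duality near $p=\infty$) is the standard way to make that precise with the constant $C\,p^{2}/(p-1)$. The one point to polish, which you already flag yourself, is that the raw Marcinkiewicz constant between weak $(1,1)$ and $(2,2)$ also degenerates at the upper endpoint $p=2$; a Riesz--Thorin interpolation with the exact $L^2$ bound on, say, $p\in[3/2,2]$ removes this and leaves the stated $1/(p-1)$ blow-up only near $p=1$.
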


%To estimate $\theta-$variable in transport-diffusion regime, we require some standard properties of heat semigroup $(\Ss(t))_{t>0}$, in particular the estimate $L^p L^q $ with $\Ss(t)=e^{t\Delta}$. Especially we have.

%\begin{prop}\label{P,S} The family $(\Ss)_{t\ge0}$  is a strongly continuous semigroup of bounded linear operators from $L^p(\RR^2)$ for any $p\in[1,\infty]$. Furtheremore, for $1\le p\le q\le\infty$ the following assertions are hold.
%\begin{enumerate}
%\item[{\bf(i)}] For $f\in L^p(\RR^2)$, then for every $t>0$ we have 
%\begin{equation}\label{S(t)}
%\|\Ss(t) f \|_{L^q(\RR^2)} \leq \frac{C}{t^{\frac{1}{p}-\frac{1}{q}}}\|f\|_{L^p(\RR^2)}.
%\end{equation}
%\item[{\bf(ii)}] For $f\in L^p(\RR^2)$ and $\alpha\in\NN^N$, then for every $t>0$ we have
%\begin{equation}\label{S(t)-alpha}
%\|\Ss(t)\partial^{|\alpha|}f\|_{L^q(\RR^2)}\le\frac{C}{t^{\frac{|\alpha|}{2}\frac{1}{2}+\frac{1}{p}-\frac{1}{q}}}\|f\|_{L^p(\RR^2)}.
%\end{equation}
%\end{enumerate}
%\end{prop} 

A worthwile property of regularization of the heat semigroup in mixed time-space spaces is given by the following result. The proof can be found in Theorem 7.3, \cite{Lemarie-Rieusset}.

\begin{prop}\label{op A}
Let $(r,p) \times ]1,+\infty[^2 $ and  $\mathcal{A}$ be an operator defined by
\begin{equation*}
\mathcal{A}a= \int_0^t \nabla^2 \mathbb{S}(t-\tau)a(\tau,\cdot) d\tau.
\end{equation*}   
Then $\mathcal{A}$ is a bounded from $L^r_tL^p$ into itself, that is for $a\in L^r_tL^p$ we have
\begin{equation*}
\| \mathcal{A}a \Vert_{L^r_t L^{p}}\leq C \Vert a\Vert_{L^r_tL^{p}}.
\end{equation*}
\end{prop}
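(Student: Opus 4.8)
The plan is to realise $\mathcal{A}$ as a Fourier multiplier operator on the space--time $\RR_t\times\RR^2_x$ and then to quote a mixed--norm multiplier theorem. Writing $\mathbb{S}(s)=e^{s\Delta}$ for the heat semigroup and extending $a$ by zero to $t<0$, the substitution $s=t-\tau$ turns the definition into
\[
\mathcal{A}a(t)=\int_{\RR}k(s)\,a(t-s)\,ds,\qquad k(s)=\mathbf{1}_{\{s>0\}}\,\nabla^2\mathbb{S}(s),
\]
i.e. a convolution in $t$ whose operator kernel $k(s)$ acts on $L^p(\RR^2)$. Taking the Fourier transform in $x$ (which replaces $\nabla^2\mathbb{S}(s)$ by multiplication by $-\xi\otimes\xi\,e^{-s|\xi|^2}$) and then in $t$, and using $\int_0^{\infty}e^{-i\lambda s}e^{-s|\xi|^2}\,ds=(i\lambda+|\xi|^2)^{-1}$ for $\xi\neq0$, one sees that $\mathcal{A}$ is, componentwise, the Fourier multiplier on $\RR\times\RR^2$ with symbol
\[
m_{jk}(\lambda,\xi)=\frac{-\,\xi_j\xi_k}{\,i\lambda+|\xi|^2\,},\qquad j,k\in\{1,2\}.
\]
A brief remark makes this reduction legitimate on tempered distributions: $k$ is locally integrable away from $s=0$ since $\|\nabla^2\mathbb{S}(s)\|_{L^p\to L^p}\lesssim s^{-1}$, and $m_{jk}$ is bounded (indeed $|m_{jk}|\le1$).

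The computational core is to check that each $m_{jk}$ satisfies the hypotheses of a mixed--norm Fourier multiplier theorem of Marcinkiewicz--Lizorkin type, namely
\[
\sup_{\lambda\in\RR,\ \xi\in\RR^2\setminus\{0\}}\big|\lambda^{a}\,\xi^{\alpha}\,\partial_{\lambda}^{a}\partial_{\xi}^{\alpha}m_{jk}(\lambda,\xi)\big|<\infty\quad\text{for all }a\in\{0,1\},\ \alpha\in\{0,1\}^{2}.
\]
Here the parabolic scaling does the work: $m_{jk}$ is invariant under $(\lambda,\xi)\mapsto(c^{2}\lambda,c\xi)$, hence $\partial_{\lambda}^{a}\partial_{\xi}^{\alpha}m_{jk}$ is parabolically homogeneous of degree $-(2a+|\alpha|)$ while $\lambda^{a}\xi^{\alpha}$ is homogeneous of degree $2a+|\alpha|$, so the product is homogeneous of degree $0$; it then suffices to see that it does not blow up, which follows because differentiating $m_{jk}$ produces a finite sum of terms $P(\xi)(i\lambda+|\xi|^2)^{-(1+\ell)}$ with $P$ a monomial, each of which, multiplied by $\lambda^{a}\xi^{\alpha}$, is controlled by a ratio $|\lambda|^{a}|\xi|^{2b}(\lambda^{2}+|\xi|^{4})^{-(a+b)/2}\lesssim1$ (arithmetic--geometric inequality), together with the fact that these derivatives stay smooth at $\xi=0$ when $\lambda\neq0$.

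Granting the multiplier bounds, the Lizorkin mixed--norm multiplier theorem yields the boundedness of $\mathcal{A}$ on $L^r_tL^p$ for every $(r,p)\in\,]1,\infty[^{2}$, with constant depending only on $(r,p)$. I expect the genuine obstacle to be precisely this last step, and it is the reason one cannot get away with Minkowski's inequality: the kernel is truly singular in the time variable --- $\|\nabla^2\mathbb{S}(s)\|_{L^p\to L^p}\sim s^{-1}$ is not integrable near $s=0$ --- so the estimate lives at the level of singular integrals, and moreover in the more delicate mixed--norm setting. If one prefers to avoid invoking a mixed--norm multiplier theorem, the same conclusion is reachable by a two--stage extrapolation: the bound on $L^{2}(\RR_t\times\RR^2_x)$ is immediate from Plancherel and $\|m_{jk}\|_{L^\infty}\le1$; the full kernel $\mathbf{1}_{\{t>\tau\}}\nabla^2 G(t-\tau,x-y)$, with $G$ the Gaussian heat kernel, is a Calderón--Zygmund kernel with respect to the parabolic metric on $\RR_t\times\RR^2_x$ (right size $\lesssim(|t-\tau|^{1/2}+|x-y|)^{-4}$, cancellation $\int_{\RR^2}\nabla^2 G(s,\cdot)=0$, Hörmander regularity), which upgrades the $L^2$ bound to $L^q(\RR_t\times\RR^2_x)$ for all $q\in\,]1,\infty[$; and a concluding operator--valued Calderón--Zygmund argument in $t$ with values in the UMD space $L^p(\RR^2)$, using $\|k(s)\|_{L^p\to L^p}\lesssim|s|^{-1}$ and $\|k'(s)\|_{L^p\to L^p}=\|\nabla^2\Delta\,\mathbb{S}(s)\|_{L^p\to L^p}\lesssim|s|^{-2}$ and seeded by the $L^p_tL^p$ bound just obtained, produces the mixed norm $L^r_tL^p$. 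Either route recovers Theorem 7.3 of \cite{Lemarie-Rieusset}.
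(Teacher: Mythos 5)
The paper does not prove this proposition at all: it is stated as a known fact and delegated entirely to Theorem 7.3 of \cite{Lemarie-Rieusset}, so there is no in-paper argument to compare against. Your proposal is a correct proof of that cited result, and both of your routes are sound. The symbol computation $m_{jk}(\lambda,\xi)=-\xi_j\xi_k(i\lambda+|\xi|^2)^{-1}$ is right, the parabolic homogeneity of degree $0$ does give the Marcinkiewicz--Lizorkin bounds, and the alternative chain (Plancherel on $L^2(dt\,dx)$, parabolic Calder\'on--Zygmund to reach $L^q(dt\,dx)$, then the Benedek--Calder\'on--Panzone operator-valued singular-integral argument in $t$ with values in $L^p(\RR^2)$, using $\|\nabla^2\mathbb{S}(s)\|_{L^p\to L^p}\lesssim s^{-1}$ and $\|\nabla^2\Delta\mathbb{S}(s)\|_{L^p\to L^p}\lesssim s^{-2}$) is essentially the proof given in the cited reference. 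Your diagnosis of where the difficulty lives is also the right one: the time singularity $s^{-1}$ is non-integrable, so Minkowski plus the semigroup decay cannot work and a genuine singular-integral (or multiplier) argument is unavoidable. Two cosmetic points only: for the Lizorkin theorem the supremum is taken over the region where each coordinate $\xi_1,\xi_2,\lambda$ entering the mixed derivative is nonzero, which your homogeneity-plus-AM--GM bound already covers; and in the final operator-valued step UMD is not actually needed for the Benedek--Calder\'on--Panzone scheme you invoke --- the seed bound on $L^p_tL^p_x$ together with the H\"ormander condition $\int_{|s|>2|s'|}\|k(s-s')-k(s)\|_{L^p\to L^p}\,ds\lesssim 1$ suffices, with the remaining exponents recovered by duality.
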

%\begin{proof} Let $\alpha\in\NN^N$. On the one hand, a straightforward computation claims in general case that $\partial^{|\alpha|}\Ss(t) f=\Ss(t) \partial^{|\alpha|}f$. On the other hand, let $(r,p) \times ]1,+\infty[^2$ and $a \in L^r_tL^p$, ({\bf ii})-Proposition \ref{op A} and H\"older with respect to time imply immediately that \begin{eqnarray*} \|\mathcal{A}a(t)\|_{L^{p}}&\le& C\int_{0}^{t}\frac{1}{(t-\tau)}\|a(t)\|_{L^p}\\&\le & C\Big(\int_0^t\frac{d\tau}{(t-\tau)^{r'}}\Big)^{\frac{1}{r'}}\|a\|_{L^r_tL^{p}}.\end{eqnarray*}\end{proof}

The connextion between Besov norms with negative indices and the heat semigroup is stated in the following lemma, see, e. g. Theorem 2. 34 in \cite{Bahouri-Chemin-Danchin}.
\begin{prop}\label{A 1}
Let $(s,p',r) \in ]0,+\infty [ \times [1,+\infty]^2$. A constant $C$ exists which satisfies
\begin{equation*}
C^{-1} \Vert a \Vert_{\dot{B}_{{p',r}}^{-2s}} \le\big\| \|t^{s} \mathbb{S}(t) a\|_{L^p}\big \|_{L^{r}\big(\RR_+;\frac{dt}{t}\big)} \leq C \Vert a \Vert_{\dot{B}_{{p',r}}^{-2s}}. 
\end{equation*}
\end{prop}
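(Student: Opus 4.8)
The plan is to prove the two inequalities separately, both built on the localisation property of the heat semigroup $\mathbb{S}(t)=e^{t\Delta}$: there is $c>0$ so that, for all $q\in\ZZ$ and $t>0$, one has $\|\dot\Delta_q\mathbb{S}(t)f\|_{L^p}\lesssim e^{-ct2^{2q}}\|\dot\Delta_q f\|_{L^p}$ and $\|\dot\Delta_q(-\Delta)\mathbb{S}(t)f\|_{L^p}\lesssim 2^{2q}e^{-ct2^{2q}}\|\widetilde{\dot\Delta}_q f\|_{L^p}$, where $\widetilde{\dot\Delta}_q=\dot\Delta_{q-1}+\dot\Delta_q+\dot\Delta_{q+1}$. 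First I would record these by writing $\mathbb{S}(t)=\mathbb{S}(t/2)\circ\mathbb{S}(t/2)$ and bounding the $L^1$ norm of the convolution kernel of the frequency--localised multiplier $\varphi(2^{-q}\xi)|\xi|^2e^{-(t/2)|\xi|^2}$ by $C2^{2q}e^{-ct2^{2q}}$, together with Bernstein's Lemma~\ref{Bernstein}; this is the homogeneous--space analogue of the parabolic smoothing estimates recorded in Propositions~\ref{prop2.10} and~\ref{op A}.

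For the \emph{upper} bound, write $c_q=\|\dot\Delta_q a\|_{L^p}$ and use $\mathbb{S}(t)a=\sum_{q\in\ZZ}\dot\Delta_q\mathbb{S}(t)a$ to get $t^{s}\|\mathbb{S}(t)a\|_{L^p}\lesssim\sum_q t^{s}e^{-ct2^{2q}}c_q$. Then, for each fixed $t$, I would split this sum at the frequency $j=j(t)$ determined by $2^{-2j}\le t<2^{-2(j-1)}$: on the range $q\le j$ we have $t^{s}\simeq2^{-2js}$ and $e^{-ct2^{2q}}\le1$, so the contribution is a discrete convolution of $(2^{-2qs}c_q)_q$ with the geometric --- hence $\ell^1(\ZZ)$ --- sequence $(2^{-2ms})_{m\ge0}$; on the range $q\ge j$ the factor $e^{-ct2^{2q}}\le e^{-c2^{2(q-j)}}$ overwhelms the polynomial weight $2^{2(q-j)s}$, again producing an $\ell^1$--convolution. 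Young's inequality then bounds $t^{s}\|\mathbb{S}(t)a\|_{L^p}$ by $d_{j(t)}$ with $\|(d_j)_j\|_{\ell^r}\lesssim\|(2^{-2qs}c_q)_q\|_{\ell^r}=\|a\|_{\dot B^{-2s}_{p,r}}$; since $j(t)$ is constant on each block $[2^{-2j},2^{-2(j-1)})$, which carries fixed $\tfrac{dt}{t}$--mass, integrating in $t$ turns this $\ell^r$ bound into the claimed $L^r(\RR_+;\tfrac{dt}{t})$ bound.

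For the \emph{lower} bound, I would work modulo polynomials (which is exactly why the homogeneous space is the natural setting for this estimate) and start from the reproducing identity $a=\int_0^{\infty}(-\Delta)\mathbb{S}(t)a\,dt$. Localising and inserting the second heat bound yields $\|\dot\Delta_q a\|_{L^p}\lesssim\int_0^\infty2^{2q}e^{-ct2^{2q}}\|\mathbb{S}(t/2)a\|_{L^p}\,dt$. Substituting $\tau=t/2$ and writing $\|\mathbb{S}(\tau)a\|_{L^p}=\tau^{-s}F(\tau)$ with $F(\tau)=\tau^{s}\|\mathbb{S}(\tau)a\|_{L^p}$ turns this into $2^{-2qs}\|\dot\Delta_q a\|_{L^p}\lesssim\int_0^\infty K_q(\tau)F(\tau)\,\tfrac{d\tau}{\tau}$ with $K_q(\tau)=2^{2q(1-s)}\tau^{1-s}e^{-c\tau2^{2q}}$; the substitution $u=\tau2^{2q}$ shows that $\int_0^\infty K_q(\tau)\tfrac{d\tau}{\tau}$ is finite and independent of $q$ and that $\sup_{\tau>0}\sum_{q\in\ZZ}K_q(\tau)<\infty$, provided $s<1$. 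A mixed Schur test then bounds the integral operator $F\mapsto\big(\int_0^\infty K_q F\,\tfrac{d\tau}{\tau}\big)_q$ from $L^r(\RR_+;\tfrac{d\tau}{\tau})$ into $\ell^r(\ZZ)$, and taking $\ell^r$--norms gives $\|a\|_{\dot B^{-2s}_{p,r}}\lesssim\|F\|_{L^r(\RR_+;\frac{d\tau}{\tau})}$. For $s\ge1$ the constant $\int_0^\infty u^{-s}e^{-cu}\,du$ diverges at $u=0$, so there I would instead start from $a=c_k\int_0^\infty\big(t(-\Delta)\big)^{k}\mathbb{S}(t)a\,\tfrac{dt}{t}$ with an integer $k>s$ and rerun the same computation, which restores convergence.

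The step I expect to be the main obstacle is the sharp passage between the continuous $L^r(\RR_+;\tfrac{dt}{t})$ integral and the discrete Littlewood--Paley $\ell^r$ sum in both directions: a naive triangle inequality in the $t$--variable loses a power of $t$ and yields only an $\ell^1$ estimate on the dyadic pieces, so one must genuinely exploit the Gaussian/geometric localisation in the frequency parameter and repackage it as a Young convolution inequality (upper bound) or a Schur test (lower bound) on $\ell^r$. The secondary technical point, convergence of the reproducing integral at $t=0$ when $s\ge1$, is handled by the iteration with $(-\Delta)^k$ above; everything else --- the kernel bounds and the changes of variables --- is routine, and the argument follows the template of \cite{Bahouri-Chemin-Danchin}.
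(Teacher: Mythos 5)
Your proof is correct and is essentially the canonical argument: the paper offers no proof of this proposition at all, merely citing Theorem 2.34 of \cite{Bahouri-Chemin-Danchin}, and your two-sided scheme (heat-kernel localisation plus a Young convolution inequality over the dyadic blocks $2^{-2j}\le t<2^{-2(j-1)}$ for the upper bound; the reproducing formula $a=c_k\int_0^\infty(t(-\Delta))^k\mathbb{S}(t)a\,\tfrac{dt}{t}$ with $k>s$ followed by a Schur test for the lower bound) is precisely the proof found there. The only point worth flagging is the typo already present in the statement --- the inner norm should be $L^{p'}$ to match the Besov index --- which you have implicitly corrected by using a single exponent throughout.
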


\section{Smooth vortex patch }\label{S-V-P}
This section addresses actually to treat the smooth vortex in the setting of thermal time-dependent diffusivity. We present the necessary background on the admmissible family of vector field rises explicitly in our study.    
\subsection{Push-forward: definitions and properties} Let $X:\RR^N\rightarrow\RR^N$ be a smooth family of a vector fields and $f:\RR^N\rightarrow\RR$ be a smooth function. The derivative of $f$ in the direction $X$ is denoted by $\partial_X f$ and defined by
\begin{equation*}
X(f)=\partial_X f=\sum_{i=1}^{N}X^{i}\partial_{i}f=X\cdot\nabla f.
\end{equation*}
This is the Lie derivative of the function $f$ with respect to the vector field $X$, denoted usually by $\mathcal{L}_X f$ and in the previous formula we adopt different notations for this object.

\begin{defi}\label{def3.1} 
Let $X,Y:\RR^{N}\rightarrow\RR^{N}$ be a two family of vector fields. Their commutator is defined as the Lie bracket $[X,Y]$ which is given in the coordinates system by
\begin{eqnarray*}
[X,Y]^{i}&=&\sum_{j=1}^{N}(X^{j}\partial_{j}Y^{i}-Y^{j}\partial_{j}X^{i})\\
&=&\partial_X Y^{i}-\partial_Y X^{i}.
\end{eqnarray*} 
\end{defi}
We observe that the previous identity can also be written in the following form
\begin{equation*}
\partial_X\partial_Y-\partial_Y\partial_X=\partial_{\partial_X Y-\partial_Y X}.
\end{equation*}
For $f\in L^\infty$ and $X$ a family of vector fields we define $\partial_X f$ in a weak sense as
\begin{equation}\label{wd}
\partial_X f=\Div(Xf)-f\Div X.
\end{equation}
Next, we state the definition of the well-known {\it push-forward} of a family of vector fields $X$ by a diffeomorphism. To be precise we have.
\begin{defi}\label{def3.2}  Let $X:\RR^N\rightarrow\RR^N$ be a family of vector field and $\phi$ be a diffeomorphism of $\RR^N$. The push-forward of $X$ by $\phi$, denoted by $\phi_{\star}X$ is defined by
\begin{equation*}
(\phi_{\star}X)(x)=\big(X\cdot\nabla \phi\big)(\phi^{-1}(x)).
\end{equation*}
\end{defi} 

In particular case, where $X$ is replaced by a time-dependent vector field $v(t)$ in Lipschitz class. It is classical that this latter induces a diffeomorphism flow $\Psi(t,\cdot)$ given by \eqref{Eq-7}. %the following differential equation
%\begin{equation*}
%\partial_t\Psi(t,x)=v(t,\Psi(t,x)),\quad \Psi(0,x)=x.
%\end{equation*} 
Consequently, the push-forward for a given family of vector fields $X_0$ by the flow $\Psi(t,\cdot)$ is the time-dependent family of vector fields $(X_t)$ that can be written in the local coordinates as follows:
\begin{equation}\label{pf}
X_t(x)=\big(X_0\cdot\nabla\Psi(t,x)\big)\big(\Psi^{-1}(t,x)\big).
\end{equation}
The first important property of such family is that it evolves the following inhomogeneous transport equation
\begin{equation}\label{tr}
\partial_t X_t+v\cdot\nabla X_t=X_t\cdot\nabla v.
\end{equation}

Another main feature of the family $(X_{t})$ given by the equation \eqref{tr} reflects in its commutation with the transport operator $D_t=\partial_t+v\cdot\nabla$. This implies an important consequence about the dynamics of the tangential regularity of the vorticity subject to the system \eqref{Eq-5}. Actually, one obtains easily the following result.
%An application of the estimate \eqref{X-transport} is given by the following lemma which can be proved in a straightforward way.
\begin{prop}\label{com} Let $X$ be the push-forward of a smooth family of vector fields $X_0$ defined by \eqref{pf}. Then $X$ commutes with the transport operator $D_t=\partial_t+v\cdot\nabla$ in the sense
\begin{equation*}
[X,D_t]=\partial_X D_t-D_t\partial_X=0.
\end{equation*}
\end{prop}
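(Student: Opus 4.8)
The plan is to establish the commutation identity $[X,D_t]=0$ by a direct computation using the transport equation \eqref{tr} satisfied by the push-forward family $(X_t)$, together with the Lie bracket identity recorded just before Definition \ref{def3.2}, namely $\partial_X\partial_Y-\partial_Y\partial_X=\partial_{\partial_X Y-\partial_Y X}$. First I would unwind the definition: for a smooth scalar function $f$ we have $[X,D_t]f=\partial_X(D_t f)-D_t(\partial_X f)$, and the goal is to show this vanishes identically. Writing $D_t f=\partial_t f+v\cdot\nabla f=\partial_t f+\partial_v f$, I expand both terms using the Leibniz rule for the directional derivative $\partial_X$ in time and the bracket formula in space.

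The key step is the following bookkeeping. On one hand,
\begin{equation*}
\partial_X(D_t f)=\partial_X\partial_t f+\partial_X\partial_v f=\partial_t(\partial_X f)-\partial_{(\partial_t X)}f+\partial_v\partial_X f+\partial_{[X,v]}f,
\end{equation*}
where I used that $\partial_X$ and $\partial_t$ commute up to the time-derivative hitting the coefficients of $X$ (producing the term $-\partial_{(\partial_t X_t)}f$), and the spatial bracket identity $\partial_X\partial_v-\partial_v\partial_X=\partial_{\partial_X v-\partial_v X}=\partial_{[X,v]}$. On the other hand,
\begin{equation*}
D_t(\partial_X f)=\partial_t(\partial_X f)+\partial_v(\partial_X f).
\end{equation*}
Subtracting, the terms $\partial_t(\partial_X f)$ and $\partial_v\partial_X f$ cancel, leaving
\begin{equation*}
[X,D_t]f=-\partial_{(\partial_t X_t)}f+\partial_{[X,v]}f=\partial_{\,-\partial_t X_t+X\cdot\nabla v-v\cdot\nabla X\,}f.
\end{equation*}
But the transport equation \eqref{tr} says precisely that $\partial_t X_t+v\cdot\nabla X_t-X_t\cdot\nabla v=0$, so the vector field inside the last directional derivative is zero, and hence $[X,D_t]f=0$ for every smooth $f$. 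Since $f$ is arbitrary this gives the operator identity $[X,D_t]=0$.

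I would then remark that the computation extends to $f\in L^\infty$ using the weak definition \eqref{wd} of $\partial_X f$ together with $\Div v=0$, so that all the manipulations above make sense distributionally; this is the only genuinely delicate point, and it is handled exactly as in the references (e.g. \cite{Chemin2}) by testing against smooth compactly supported functions and moving derivatives onto the test function. The main obstacle, such as it is, is purely notational: one must be careful that when $\partial_X$ acts on the time derivative the coefficients $X^i=X^i_t$ depend on $t$, so $\partial_t$ and $\partial_X$ do not commute outright but produce exactly the term $-\partial_{(\partial_t X_t)}f$ that combines with the spatial commutator to reproduce the left-hand side of \eqref{tr}. Once that accounting is done correctly the proof is immediate.
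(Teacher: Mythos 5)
Your computation is correct and is exactly the standard direct verification: the only two non-cancelling terms, $-\partial_{(\partial_t X_t)}f$ and $\partial_{[X_t,v]}f$, combine into $\partial_{Z}f$ with $Z=-\partial_t X_t-v\cdot\nabla X_t+X_t\cdot\nabla v$, which vanishes by the transport equation \eqref{tr}. The paper states Proposition \ref{com} without proof ("one obtains easily..."), and your argument is precisely the intended one, so nothing further is needed.
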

%Next, we state some regularity properties of the family $X=(X_t)$ in $L^p-$norms with $p\in[2,\infty]$. To be precise, we will prove.
%\begin{prop} Let $p\in[2,\infty]$ and $X_0\in L^p(\RR^2)$. Then $X_t\in L^p(\RR^2)$ for any $t>0$ and
%\begin{equation}\label{X-L-p}
%\|X_{0}\|_{L^p}e^{-V(t)}\le \|X_t\|_{L^p}\le \|X_{0}\|_{L^p}e^{-V(t)}.
%\end{equation}
%\end{prop}

%\begin{proof} We distinguish two cases.\\
%{$\bullet$\it First case $p\in[2,\infty[$.} Multiplying \eqref{tr} by $|X_t|^{p-2}X_t$ and integrate by parts over $\RR^2$, so incompressibility condition and H\"older inequality leading 
%\begin{equation*}
%\frac1p\frac{d}{dt}\|X_t\|_{L^p}^{p}\le \|\nabla v(t)\|_{L^\infty}\|X_t\|_{L^p}^{p}.
%\end{equation*}
%However, Gronwall's inequality gives for any $T>0$
%\begin{equation*}
%\|X_t\|_{L^p}\le \|X_0\|_{L^p} e^{V(t)}.
%\end{equation*}
%For the first inequality of \eqref{X-L-p}, apply the time derivative to $\partial_{X_0}\Psi(t,x)$, invoking \eqref{tr} and \eqref{Eq-7} to obtain
%\begin{equation*}
%\partial_t\partial_{X_0}\Psi(t,x)=\nabla v\big(t,\Psi(t,x)\big)\cdot\partial_{X_0}\Psi(t,x),\;\partial_{X_0}\Psi(0,x)=X_0.
%\end{equation*}
%The time reversibility of this equation gives the desired estiamate.\\
%{$\bullet$\it Second case $p=\infty$.} Employ the continuity with repect to $p$ it suffices to take $p \rightarrow \infty $. Proposition is then proved.  
%\end{proof}

\subsection{Anisotropic spaces}\label{X_t} This subsection motivates by the definition of the anisotropic H\"older spaces which is considered as a cornerstone to reach the Lipschitz norm of the velocity.
\begin{defi}\label{3.1}
Let $\epsilon \in ]0, 1[$. A family of vector fields $X = (X_\lambda)_{\lambda \in \Lambda}$ is said to be admissible if and only if   the following assertions are hold.\\
\begin{enumerate}
\item[{\bf(i)}]Regularity: $\forall \lambda \in \Lambda \quad X_\lambda, \Div X_\lambda   \in C^{\epsilon}.$
\item[{\bf(ii)}] Non-degeneray: $ I(X)\triangleq \inf_{x\in \mathbb{R}^{N}}\sup_{\lambda \in \Lambda }\mid X_\lambda (x)\mid>0.$
\end{enumerate}
Setting
\begin{equation*}
\widetilde{\|}X_\lambda\|{C^{\epsilon}}\triangleq \| X_\lambda\|_{C^{\epsilon}} + \| \Div X_\lambda\|_{C^{\epsilon}}.
\end{equation*}

\end{defi}
\begin{defi}\label{def:3.2.1}
Let $\epsilon \in ]0, 1[$ and $X $ be an admissible family of vector fields. We say that $u \in C^{\epsilon}(X)$ if and only if:\\
\begin{enumerate}
\item[{\bf(i)}] $ u \in L^{\infty}$ and satisfies
\begin{equation*}
\forall \lambda \in \Lambda, \partial_{X_{\lambda}} u \in C^{\epsilon-1}, \quad \sup_{\lambda \in \Lambda }\Vert\partial_{X_{\lambda}} u\Vert_{C^{\epsilon-1}}< +\infty .
\end{equation*}   
\item[{\bf(ii)}]  $ C^{\epsilon}(X)$ is a normed space with
\begin{equation*}    
\Vert u\Vert_{C^{\epsilon}(X)}\triangleq \frac{1}{I(X)}\left( \Vert u\Vert_{L^{\infty}}\sup_{\lambda \in \Lambda}\widetilde{\Vert}X_\lambda\Vert_{C^{\epsilon}}+ \sup_{\lambda \in \Lambda }\Vert \partial_{X_{\lambda}} u\Vert_{C^{\epsilon-1}}\right) .
\end{equation*} 
\end{enumerate}
To derive the Lipschitzian norm of the velocity it is mandatory to state the following stationary logarithmic estimate which its original proof can be found in J. Chemin \cite{Chemin2}. 
\begin{Theo}\label{The estimate log}
Let $\epsilon \in ]0, 1[$ and $X = (X_{t,\lambda})_{\lambda \in \Lambda}$ be a family of vector fields as in Definition \ref{def:3.2.1}. Let $v$ be a free-divergence vector field such that its vorticity $\omega$ belongs to $ L^{2}\cap C^{\epsilon}(X)$. Then there exists a constant C depending only on $\epsilon $, such that
\begin{align}
\Vert \nabla v\Vert_{L^{\infty}}\leq C\left( \Vert \omega \Vert_{L^{2}} + \Vert \omega\Vert_{L^{\infty}} \log \left( e+ \frac{\Vert \omega \Vert_{C^{\epsilon}(X)}}{\Vert \omega \Vert_{L^{\infty}}} \right) \right) .
\end{align}
\end{Theo}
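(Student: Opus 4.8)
The plan is to follow Chemin's classical strategy, adapting the frequency-localized decomposition of $\nabla v$ to the anisotropic setting via the family $X$. First I would write $\nabla v = \nabla\nabla^\perp\Delta^{-1}\omega$ and split it dyadically as $\nabla v = \sum_{q\ge-1}\nabla\Delta_q v$, then separate the sum into low, middle, and high frequency blocks at two thresholds $-1\le q<N$ and $q\ge N$, where $N$ is an integer to be optimized at the end. The low block $\sum_{-1\le q<N}\|\nabla\Delta_q v\|_{L^\infty}$ is harmless: by Bernstein (Lemma \ref{Bernstein}) and the $L^2$-boundedness of the truncated Riesz-type multiplier, each term is controlled by $\|\omega\|_{L^2}$ up to a factor growing like $2^{q}\cdot2^{q(0-1)}$... more carefully one uses $\|\nabla\Delta_q v\|_{L^\infty}\lesssim 2^{q}\|\Delta_q v\|_{L^\infty}\lesssim 2^{q}2^{q}\|\Delta_q v\|_{L^1_{\mathrm{loc}}}$-type estimates; the clean route is $\|\nabla\Delta_q v\|_{L^\infty}\lesssim\|\Delta_q\omega\|_{L^\infty}$ for $q\ge0$ and $\lesssim\|\omega\|_{L^2}$ for $q=-1$, so the low-frequency sum is $\lesssim N\|\omega\|_{L^\infty}+\|\omega\|_{L^2}$. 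The very-high block $\sum_{q\ge N}\|\nabla\Delta_q v\|_{L^\infty}\lesssim\sum_{q\ge N}\|\Delta_q\omega\|_{L^\infty}$ must be estimated using the $C^\epsilon(X)$ regularity rather than crude summation: this is where the anisotropic structure enters.

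For the high-frequency block the key point is that membership of $\omega$ in $C^\epsilon(X)$ gives, for each $\lambda$, control of $\partial_{X_\lambda}\omega$ in $C^{\epsilon-1}$, and by the non-degeneracy condition $I(X)>0$ one can invert this: at each point $x$ some $X_\lambda(x)$ has length $\gtrsim I(X)$, so the full gradient $\nabla\Delta_q\omega$ (or rather $\Delta_q\omega$ itself at high frequency) can be recovered from the directional derivatives up to commutator errors. Concretely I would use the identity $\partial_{X_\lambda}\Delta_q\omega = \Delta_q\partial_{X_\lambda}\omega + [\partial_{X_\lambda},\Delta_q]\omega$, estimate the first term by $2^{q(1-\epsilon)}$ times $\sup_\lambda\|\partial_{X_\lambda}\omega\|_{C^{\epsilon-1}}$, and bound the commutator $[\partial_{X_\lambda},\Delta_q]\omega$ — which is a standard paraproduct/Bony-type commutator — by $2^{q(1-\epsilon)}(\|X_\lambda\|_{C^\epsilon}+\|\Div X_\lambda\|_{C^\epsilon})\|\omega\|_{L^\infty}$, using Corollary \ref{Coro:1} and the weak definition $\partial_{X_\lambda}f=\Div(X_\lambda f)-f\Div X_\lambda$ to keep only first-order information on $X_\lambda$. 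Dividing by $I(X)$ and summing the geometric series $\sum_{q\ge N}2^{-q\epsilon}$ then yields a bound $\lesssim 2^{-N\epsilon}\|\omega\|_{C^\epsilon(X)}$ for the tail.

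Collecting the three pieces gives
\begin{equation*}
\|\nabla v\|_{L^\infty}\le C\Big(\|\omega\|_{L^2}+N\|\omega\|_{L^\infty}+2^{-N\epsilon}\|\omega\|_{C^\epsilon(X)}\Big),
\end{equation*}
valid for every integer $N\ge1$. Optimizing in $N$ — choosing $N\sim\frac{1}{\epsilon}\log_2\!\big(e+\|\omega\|_{C^\epsilon(X)}/\|\omega\|_{L^\infty}\big)$ so that the middle and tail terms balance — produces exactly the claimed logarithmic estimate
\begin{equation*}
\|\nabla v\|_{L^\infty}\le C\Big(\|\omega\|_{L^2}+\|\omega\|_{L^\infty}\log\Big(e+\tfrac{\|\omega\|_{C^\epsilon(X)}}{\|\omega\|_{L^\infty}}\Big)\Big),
\end{equation*}
the constant $C$ depending only on $\epsilon$ through the geometric-series sums and the commutator estimates.

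The main obstacle, and the step deserving the most care, is the high-frequency commutator estimate for $[\partial_{X_\lambda},\Delta_q]\omega$ together with the inversion step: one must show that controlling all directional derivatives $\partial_{X_\lambda}$ (for $\lambda$ ranging over $\Lambda$) plus the non-degeneracy $I(X)>0$ genuinely recovers a pointwise bound on the localized pieces of $\nabla v$, uniformly in $q$, without losing the $2^{-q\epsilon}$ summability. This requires using the weak/divergence form of $\partial_{X_\lambda}$ so that only $\|X_\lambda\|_{C^\epsilon}$ and $\|\Div X_\lambda\|_{C^\epsilon}$ appear (not $\|\nabla X_\lambda\|$), exactly as packaged in the quantity $\widetilde{\|}X_\lambda\|_{C^\epsilon}$ and exploited in Corollary \ref{Coro:1}; everything else is a routine application of Bernstein's inequalities and Calderón–Zygmund boundedness. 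Since this is precisely Chemin's argument from \cite{Chemin2}, I would cite that source for the details and present only the structure above.
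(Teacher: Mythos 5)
Your overall skeleton is the right one and is indeed how the cited result is established (the paper itself gives no proof of this theorem, deferring entirely to \cite{Chemin2}): split $\nabla v$ into a low block controlled by $\Vert\omega\Vert_{L^2}$, a middle block of $N$ terms each controlled by $\Vert\omega\Vert_{L^\infty}$ via the $L^\infty$-boundedness of the frequency-localized Riesz multipliers, a high block controlled by $2^{-N\epsilon}\Vert\omega\Vert_{C^\epsilon(X)}$, and then optimize in $N$ to produce the logarithm. The balancing step at the end is fine.

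However, your treatment of the high-frequency block contains a genuine error. You reduce it to $\sum_{q\ge N}\Vert\Delta_q\omega\Vert_{L^\infty}$ and then claim that $\Delta_q\omega$ can be "recovered" from the directional derivatives $\partial_{X_\lambda}\Delta_q\omega$ via the non-degeneracy $I(X)>0$, giving $\Vert\Delta_q\omega\Vert_{L^\infty}\lesssim 2^{-q\epsilon}\Vert\omega\Vert_{C^\epsilon(X)}$. This cannot work: such a bound would say $\omega\in C^\epsilon$, whereas the whole point of the vortex-patch setting is that $\omega={\bf 1}_{\Omega_t}$ is discontinuous, so $\Vert\Delta_q\omega\Vert_{L^\infty}$ does not tend to $0$ and $\sum_{q\ge N}\Vert\Delta_q\omega\Vert_{L^\infty}$ diverges. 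The inversion step itself is the problem: in dimension two the non-degeneracy furnishes only \emph{one} good direction $X_\lambda(x)$ per point, and a function spectrally localized at frequency $2^q$ is not controlled by $2^{-q}$ times a single directional derivative — a function oscillating purely transversally to $X_\lambda$ (exactly the behaviour of $\omega$ across $\partial\Omega_t$, to which $X_\lambda$ is tangent) has $\partial_{X_\lambda}\omega$ small while $\Delta_q\omega$ stays of size one. Chemin's actual argument never estimates $\Delta_q\omega$; it exploits an algebraic factorization of the operator $\omega\mapsto\nabla v$. Freezing $e=X_\lambda(x_0)/|X_\lambda(x_0)|$ at the point of evaluation, the symbol $\xi_i\xi_j/|\xi|^2$ satisfies $\xi_i\xi_j/|\xi|^2=e^\perp_ie^\perp_j+(\xi\cdot e)\,L_{ij}(\xi)/|\xi|^2$ with $L_{ij}$ linear, so that $\nabla v=(\hbox{bounded matrix})\,\omega+(\hbox{order}-1\hbox{ operator})\,\partial_e\omega$; the first piece is bounded once and for all by $\Vert\omega\Vert_{L^\infty}$ (it is not summed over $q$), and only the second piece — where the tangential derivative appears intrinsically in the operator, not by inversion — yields the $2^{-q\epsilon}$ decay after using $\partial_{X_\lambda}\omega\in C^{\epsilon-1}$, dividing by $|X_\lambda(x_0)|\ge I(X)$, and absorbing the variable-coefficient errors by paraproduct estimates of the type of Corollary \ref{Coro:1}. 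Without this structural input your high-frequency bound, and hence the logarithmic estimate, does not follow.
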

\end{defi}
We now make a precise interpretation of the boundary regularity and the tangent space which will be explored in the proof of Theorem \ref{Th-1} 
\begin{defi}\label{Char} Let $0<\EE<1$, then we have the following definitions. 
\begin{enumerate}
\item[{\bf(1)}] A closed curve $\Sigma$ is said to be $C^{1+\EE}-$regular if there exists $f\in C^{1+\EE}(\RR^2)$ such that $\Sigma$ is a locally zero sets of $f$, i.e., there exists a neighborhood $V$ of $\Sigma$ such that
\begin{equation}\label{Reg-Om}
\Sigma=f^{-1}\{0\}\cap V,\quad \nabla f(x)\ne 0\quad\forall x\in V.
\end{equation}
\item[{\bf(2)}] A vector field $X$ with $C^\EE-$regularity is said to be tangent to $\Sigma$ if $X\cdot\nabla f_{|\Sigma}=0$. The set of such vector field denoted by $\mathcal{T}_{\Sigma}^{\EE}$.
\end{enumerate}
\end{defi}

Given a compact curve $\Sigma$ of the regularity $C^{1+\EE}$, with $0<\EE<1$. The striated or co-normal space  $C^{\EE}_{\Sigma}$ associated  to $\Sigma$ is defined by
$$
C^{\EE}_{\Sigma}\triangleq\big\{u\in L^\infty(\RR^2); \forall X\in\mathcal{T}^{\EE}_{\Sigma},\;(\Div X=0)\Rightarrow\Div(Xu)\in C^{\EE-1}\big\}.
$$
According Danchin's result \cite{Danchin0}, the class $C^{\EE}_{\Sigma}$ doesn't covers only the vortex patch of the type $\omega_0={\bf 1}_{\Omega_0}$, but also encompass the so-called general vortex. Specifically, we have.
\begin{prop}\label{G-V} Let $\Omega_0$ be a $C^{1+\EE}-$bounded domain, with $0<\EE<1$. Then for every function $f\in C^\EE$, we have
\begin{equation*}
f{\bf 1}_{\Omega_0}\in C^{\EE}_{\Sigma}.
\end{equation*}
\end{prop}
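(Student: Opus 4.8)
## Proof plan for Proposition \ref{G-V}

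The plan is to show that for any divergence‑free vector field $X \in \mathcal{T}^{\EE}_{\Sigma}$, the quantity $\Div(X f \mathbf{1}_{\Omega_0})$ lies in $C^{\EE-1}$. Write $\Div(X f \mathbf{1}_{\Omega_0}) = \partial_X(f\mathbf{1}_{\Omega_0}) = f\,\partial_X \mathbf{1}_{\Omega_0} + \mathbf{1}_{\Omega_0}\,\partial_X f$ in the weak sense \eqref{wd}; since $\Div X = 0$ both terms make sense as distributions. The second term $\mathbf{1}_{\Omega_0}\partial_X f$ is the product of an $L^\infty$ function with $\partial_X f = X \cdot \nabla f$, and since $X, f \in C^\EE$ (and $\EE>0$, so $C^\EE$ is an algebra acting on itself) this term is in fact in $L^\infty \hookrightarrow C^{\EE-1}$, hence harmless. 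So the whole difficulty is concentrated in the first term $f\,\partial_X\mathbf{1}_{\Omega_0}$.

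First I would recall the known base case, which is precisely Danchin's result: since $\Omega_0$ is a $C^{1+\EE}$ bounded domain, $\mathbf{1}_{\Omega_0} \in C^{\EE}_\Sigma$, i.e. $\partial_X \mathbf{1}_{\Omega_0} = \Div(X\mathbf{1}_{\Omega_0}) \in C^{\EE-1}$ for every divergence‑free $X \in \mathcal{T}^{\EE}_\Sigma$. (Concretely, writing $\Sigma = g^{-1}\{0\}\cap V$ with $g \in C^{1+\EE}$, tangency means $X\cdot\nabla g$ vanishes on $\Sigma$; away from $\Sigma$ the indicator is locally constant so $\partial_X \mathbf{1}_{\Omega_0}$ is supported on $\Sigma$ and its action against a test function reduces, by the divergence theorem, to a boundary integral of $X\cdot n$, which is controlled in $C^{\EE-1}$ because $X \cdot \nabla g \in C^\EE$ vanishes on $\Sigma$.) Thus it remains to check that multiplication by $f \in C^\EE$ maps $C^{\EE-1}$ into $C^{\EE-1}$, i.e. that $C^\EE \cdot C^{\EE-1} \hookrightarrow C^{\EE-1}$.

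The key step is therefore this paraproduct estimate. Using Bony's decomposition (Definition \ref{def:1}), split $f\,(\partial_X\mathbf{1}_{\Omega_0}) = T_f (\partial_X\mathbf{1}_{\Omega_0}) + T_{\partial_X\mathbf{1}_{\Omega_0}} f + \mathscr{R}(f, \partial_X\mathbf{1}_{\Omega_0})$. The paraproduct $T_f$ acting on $B^{\EE-1}_{\infty,\infty} = C^{\EE-1}$ is bounded since $f \in L^\infty$; the paraproduct $T_{\partial_X\mathbf{1}_{\Omega_0}} f$ maps into $B^{(\EE-1)+\EE}_{\infty,\infty} = C^{2\EE-1} \hookrightarrow C^{\EE-1}$ using $\partial_X\mathbf{1}_{\Omega_0} \in C^{\EE-1}$ and $f \in C^\EE$; and the remainder $\mathscr{R}(f,\partial_X\mathbf{1}_{\Omega_0})$ is well defined and lies in $C^{\EE + (\EE-1)} = C^{2\EE-1} \hookrightarrow C^{\EE-1}$ precisely because the sum of the regularity indices $\EE + (\EE - 1) = 2\EE - 1 > -1$ is positive enough (actually $>-1$ suffices) for the high‑high term to converge. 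Collecting the three pieces gives $f\,\partial_X\mathbf{1}_{\Omega_0} \in C^{\EE-1}$ with norm bounded by $C\|f\|_{C^\EE}\big(\|\mathbf{1}_{\Omega_0}\|_{L^\infty} + \|\partial_X\mathbf{1}_{\Omega_0}\|_{C^{\EE-1}}\big)$.

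Combining the two contributions, $\Div(Xf\mathbf{1}_{\Omega_0}) \in C^{\EE-1}$ for every divergence‑free $X \in \mathcal{T}^\EE_\Sigma$, which is exactly the defining property of $C^\EE_\Sigma$; hence $f\mathbf{1}_{\Omega_0} \in C^\EE_\Sigma$. The main obstacle is the remainder term $\mathscr{R}(f,\partial_X\mathbf{1}_{\Omega_0})$: one must verify carefully that the negative index $\EE-1$ of $\partial_X\mathbf{1}_{\Omega_0}$ is compensated by the positive index $\EE$ of $f$ so that the high‑frequency sum converges in $C^{\EE-1}$ — this is where the hypotheses $f \in C^\EE$ and $\EE>0$ (rather than merely $f \in L^\infty$) are genuinely used, mirroring the commutator estimate already carried out in Corollary \ref{Coro:1}.
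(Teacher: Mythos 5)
The paper offers no proof of Proposition \ref{G-V} at all --- it is simply quoted from Danchin \cite{Danchin0} --- so your argument has to stand on its own, and as written it has two genuine gaps. The first is the claim that $\mathbf{1}_{\Omega_0}\,\partial_X f$ is ``in fact in $L^\infty$''. For $f$ merely in $C^\EE$ with $0<\EE<1$, $\nabla f$ is a distribution of order $\EE-1<0$ and $\partial_X f=X\cdot\nabla f$ is in general unbounded (take $f(x)=|x|^{\EE}$ near an interior point and $X$ nonvanishing there); the algebra property of $C^\EE$ gives $fX\in C^\EE$ and hence $\Div(fX)\in C^{\EE-1}$, but it does not give $X\cdot\nabla f\in L^\infty$. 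Worse, the term you declare to be the whole difficulty is actually zero: for a continuous divergence-free $X$ tangent to $\Sigma$, $\langle\Div(X\mathbf{1}_{\Omega_0}),\phi\rangle=-\int_{\Omega_0}\Div(\phi X)\,dx$ reduces, after mollifying $X$ and applying the divergence theorem on the $C^{1}$ domain, to the boundary integral of $\phi\,X\cdot n$, which vanishes by tangency. So $\partial_X\mathbf{1}_{\Omega_0}=0$, and all of the difficulty is concentrated in the term you dismissed.

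The second gap is the remainder estimate. The operator $\mathscr{R}$ maps $C^{s}\times C^{t}$ into $C^{s+t}$ only when $s+t>0$: the dyadic sum $\sum_{q\ge j}2^{-q(s+t)}$ controlling $\|\Delta_j\mathscr{R}(u,v)\|_{L^\infty}$ diverges for $s+t\le 0$, and your parenthetical claim that $s+t>-1$ suffices is not correct (that relaxation is available for the paraproduct $T_uv$, not for the remainder). Consequently the product $C^\EE\cdot C^{\EE-1}$ is not even well defined, let alone bounded into $C^{\EE-1}$, unless $2\EE-1>0$, i.e.\ $\EE>1/2$; for $\EE\le 1/2$ your Leibniz splitting and the key step both collapse. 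The route that works for every $\EE\in(0,1)$ --- and is the one behind the citation --- avoids paraproducts on this rough product entirely: one localizes near $\Sigma$, flattens the boundary, extends $f$ from each side of $\Sigma$ by Whitney's theorem, and uses the fact that $f\mathbf{1}_{\Omega_0}$ keeps its $C^\EE$ modulus of continuity in the directions tangent to $\Sigma$, which is exactly the regularity a tangent derivative $\partial_X$ detects. You would need either to restrict to $\EE>1/2$ or to replace the paraproduct step by such a geometric argument.
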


%The proof of the previous theorem involves some a priori estimates, respectively for the density, vorticity, and the velocity in different functional spaces listed below.  
%\section{proof f theorem \eqref{the3.8} }
%The existence and uniqueness of the solution the system \eqref{Eq:1} was proved in  \cite{M.Paicu-N.Zhu}  , we only need to give some necessary a priori estimates for the proof of Theorem \eqref{the3.8}

\subsection{A priori estimates} This part is considered as the pivot in our analysis. We shall give some a priori estimates about velocity and vorticity, as well as the temperature in different functional spaces.  
\begin{prop}\label{prop v}
Let $(v_\mu,\theta_\mu)$  be a smooth solution of \eqref{Eq-3} then the following estimates hold true.
\begin{enumerate}
\item[{\bf(i)}] Let $p \in [2,+\infty]$ then for $ t \geq 0$ we have
\begin{equation*}
\Vert \theta_\mu(t) \Vert_{L^p} \leq \Vert \theta_\mu^0 \Vert_{L^p}.
\end{equation*}
\item[{\bf(ii)}] Let $(v_\mu^0,\theta_\mu^0) \in L^2 \times L^2 $ then for $ t \geq 0 $ we have 
$$
\Vert v_\mu(t) \Vert_{L^2} \leq \Vert v_\mu^0 \Vert_{L^2}+t\Vert \theta_\mu^0 \Vert_{L^2}.
$$
and
\begin{equation*}
\Vert v_\mu(t) \Vert_{L^2}^2+ 2\mu \int_0^t||\nabla v_\mu (\tau)||_{L^2}^2 d\tau \leq C_0(1+t^2). 
\end{equation*}
\item[{\bf(iii)}] Let $(\omega_\mu^0, \theta_\mu^0)\in L^2 \times L^2 $ then for $ t \geq 0 $ we have
\begin{equation*} 
\Vert \omega_\mu(t)\Vert_{L^2} \leq  \Vert {\omega_\mu}^0\Vert_{L^2}+ \kappa_0\Vert \theta_\mu^0\Vert_{L^2}+t.
\end{equation*}  
%\item[{\bf(iv)}] Let $p\in ]2,+\infty]$ then for $t>0$ we have
%\begin{equation*} 
%\Vert \omega_\mu(t)\Vert_{L^p} \leq  \Vert {\omega}_\mu^0\Vert_{L^p}+\Vert \nabla\theta_\mu\Vert_{L^1_tL^p}.
%\end{equation*} 
\end{enumerate}
\end{prop}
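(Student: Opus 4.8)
The three estimates are classical energy-type bounds and the plan is to treat them in order, using the structure of \eqref{Eq-3} and the $L^p$ conservation built from the divergence-free condition on $v_\mu$. For \textbf{(i)}, I would multiply the temperature equation $\partial_t\theta_\mu+v_\mu\cdot\nabla\theta_\mu-\nabla\cdot(\kappa(\theta_\mu)\nabla\theta_\mu)=0$ by $|\theta_\mu|^{p-2}\theta_\mu$ and integrate over $\RR^2$. The transport term $\int (v_\mu\cdot\nabla\theta_\mu)|\theta_\mu|^{p-2}\theta_\mu\,dx = \tfrac1p\int v_\mu\cdot\nabla(|\theta_\mu|^p)\,dx = 0$ since $\Div v_\mu=0$; the diffusion term gives $-\int \nabla\cdot(\kappa(\theta_\mu)\nabla\theta_\mu)\,|\theta_\mu|^{p-2}\theta_\mu\,dx = (p-1)\int \kappa(\theta_\mu)|\theta_\mu|^{p-2}|\nabla\theta_\mu|^2\,dx\ge 0$ by \eqref{Eq-2} (positivity of $\kappa$). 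Hence $\tfrac{d}{dt}\|\theta_\mu(t)\|_{L^p}^p\le 0$, which integrates to the claim; the case $p=\infty$ follows by letting $p\to\infty$ (or by the maximum principle for the transport-diffusion equation).

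For \textbf{(ii)}, I would pair the velocity equation in \eqref{Eq-3} with $v_\mu$ in $L^2$. The pressure term drops by incompressibility ($\int\nabla p\cdot v_\mu=-\int p\,\Div v_\mu=0$), the transport term vanishes as above, and $-\mu\int\Delta v_\mu\cdot v_\mu\,dx=\mu\|\nabla v_\mu\|_{L^2}^2$, yielding
\begin{equation*}
\tfrac12\tfrac{d}{dt}\|v_\mu(t)\|_{L^2}^2+\mu\|\nabla v_\mu(t)\|_{L^2}^2=\int_{\RR^2}\theta_\mu\, v_\mu^2\,dx\le\|\theta_\mu(t)\|_{L^2}\|v_\mu(t)\|_{L^2}.
\end{equation*}
Dropping the (nonnegative) viscous term and using $\|\theta_\mu(t)\|_{L^2}\le\|\theta_\mu^0\|_{L^2}$ from part (i) gives $\tfrac{d}{dt}\|v_\mu(t)\|_{L^2}\le\|\theta_\mu^0\|_{L^2}$, hence the first bound $\|v_\mu(t)\|_{L^2}\le\|v_\mu^0\|_{L^2}+t\|\theta_\mu^0\|_{L^2}$. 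For the second bound, I keep the viscous term, integrate the differential inequality $\tfrac12\tfrac{d}{dt}\|v_\mu\|_{L^2}^2+\mu\|\nabla v_\mu\|_{L^2}^2\le\|\theta_\mu^0\|_{L^2}\|v_\mu\|_{L^2}$ in time, and insert the already-proven linear-in-$t$ bound on $\|v_\mu\|_{L^2}$, which produces a right-hand side of order $1+t^2$; absorbing constants into $C_0$ gives the stated inequality.

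For \textbf{(iii)}, I would work with the vorticity equation $\partial_t\omega_\mu+v_\mu\cdot\nabla\omega_\mu-\mu\Delta\omega_\mu=\partial_1\theta_\mu$ from \eqref{Eq-5}, test against $\omega_\mu$ in $L^2$: again transport and pressure-free structure kill the convective term, the viscous term is nonnegative, and
\begin{equation*}
\tfrac12\tfrac{d}{dt}\|\omega_\mu(t)\|_{L^2}^2\le\|\partial_1\theta_\mu(t)\|_{L^2}\|\omega_\mu(t)\|_{L^2},
\end{equation*}
so $\tfrac{d}{dt}\|\omega_\mu(t)\|_{L^2}\le\|\partial_1\theta_\mu(t)\|_{L^2}\le\|\nabla\theta_\mu(t)\|_{L^2}$. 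The only real point here is to control $\int_0^t\|\nabla\theta_\mu(\tau)\|_{L^2}\,d\tau$: going back to the $L^2$ energy identity for the temperature, $\tfrac12\tfrac{d}{dt}\|\theta_\mu\|_{L^2}^2+\int\kappa(\theta_\mu)|\nabla\theta_\mu|^2\,dx=0$, and using $\kappa\ge\kappa_0^{-1}$ from \eqref{Eq-2}, one gets $\kappa_0^{-1}\int_0^t\|\nabla\theta_\mu(\tau)\|_{L^2}^2\,d\tau\le\tfrac12\|\theta_\mu^0\|_{L^2}^2$; then Cauchy--Schwarz in time gives $\int_0^t\|\nabla\theta_\mu(\tau)\|_{L^2}\,d\tau\le t^{1/2}\big(\int_0^t\|\nabla\theta_\mu\|_{L^2}^2\big)^{1/2}\le\tfrac{\kappa_0^{1/2}}{\sqrt2}\,t^{1/2}\|\theta_\mu^0\|_{L^2}$. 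Strictly this yields a $t^{1/2}$-type term rather than the clean $\kappa_0\|\theta_\mu^0\|_{L^2}+t$ written in the statement; to match the stated form I would instead split using Young's inequality $\|\nabla\theta_\mu\|_{L^2}\le\tfrac{\kappa_0}{2}\|\nabla\theta_\mu\|_{L^2}^2+\tfrac1{2\kappa_0}$ (or integrate the ODE with the $L^1_t$ bound and the constant $1$ absorbed), so that $\int_0^t\|\nabla\theta_\mu\|_{L^2}\,d\tau\le\kappa_0\|\theta_\mu^0\|_{L^2}+t$ up to constants, which then gives exactly $\|\omega_\mu(t)\|_{L^2}\le\|\omega_\mu^0\|_{L^2}+\kappa_0\|\theta_\mu^0\|_{L^2}+t$. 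The main obstacle throughout is simply bookkeeping the temperature dissipation correctly and feeding it into the vorticity estimate with the constants packaged as in \eqref{Eq-2}; everything else is standard $L^p$ energy manipulation using $\Div v_\mu=0$.
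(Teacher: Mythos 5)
Your proposal is correct and follows essentially the same route as the paper: $L^p$ energy estimates on the $\theta_\mu$-equation using $\Div v_\mu=0$ and the lower bound $\kappa\ge\kappa_0^{-1}$, the $L^2$ pairing of the velocity equation with Cauchy--Schwarz and part (i), and the $L^2$ vorticity estimate closed by the temperature dissipation bound $\|\nabla\theta_\mu\|_{L^2_tL^2}^2\le\kappa_0\|\theta_\mu^0\|_{L^2}^2$ combined with Cauchy--Schwarz in time and Young's inequality. The $t^{1/2}$ issue you flag in (iii) is resolved in the paper exactly as you suggest (Cauchy--Schwarz in time followed by Young), and the residual constant mismatch ($\|\theta_\mu^0\|_{L^2}$ versus $\|\theta_\mu^0\|_{L^2}^2$) is present in the paper itself.
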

\begin{proof}
{\bf(i)} Multiplying $\theta_{\mu}-$equation by $|\theta_\mu| ^{p-2} \theta_\mu $, integrating by parts over $\RR^2$ and bearing in mind that $\nabla \cdot v_\mu=0$ leads 
$$
\frac{1}{p} \frac{d}{dt} \Vert \theta_\mu(t) {\Vert}^p_{L^p} + \int_{\mathbb{R}^2} \kappa(\theta_\mu)\nabla \theta_\mu \nabla(|\theta_\mu| ^{p-2} \theta_\mu)dx=0.  
$$
In light to \eqref{Eq-2} we may write
\begin{equation}\label{3.6}
\frac{1}{p} \frac{d}{dt} \Vert \theta_\mu(t) {\Vert}^p_{L^p} + \kappa^{-1}_0(p-1)\int_{\mathbb{R}^2}  |\nabla\theta_\mu| ^2|\theta_\mu|^{p-2}  dx  \leq 0.
\end{equation}
Integrating in time over $[0,t]$ we infer that
\begin{equation*}\label{3.7}
\Vert \theta_\mu(t) {\Vert}_{L^p} \leq \Vert \theta_\mu^0 {\Vert}_{L^p}.
\end{equation*}
The case, where $p=\infty$ is a direct consequence of the maximum principle.\\
{\bf(ii)} The classical scalar product in $L^2$ for $v_{\mu}-$equation allows us to achieve. 
$$
\frac{1}{2} \frac{d}{dt} \Vert v_\mu(t) {\Vert}^2_{L^2} + \mu \int_{\mathbb{R}^2} |\nabla v_\mu|^2 dx  \leq \int_{\mathbb{R}^2} |\theta_\mu v_\mu| dx.
$$
By virtue of Cauchy-Schwarz inequality, we readily get
\begin{equation}\label{ grad v l2}
\frac{1}{2}  \frac{d}{dt}  \Vert v_\mu(t) {\Vert}^2_{L^2} + \mu\|\nabla v_\mu(t)\|^2_{L^2}    \leq   \Vert v_\mu(t) {\Vert}_{L^2}  \Vert \theta_\mu(t) {\Vert}_{L^2}.
\end{equation} 
Thanks to {\bf(i)} for $p=2$, that is $\Vert \theta_\mu(t) {\Vert}_{L^2} \leq \Vert \theta_\mu^0 {\Vert}_{L^2}$ it follows after an integration with respect to time that
$$ 
\Vert v_\mu(t) \Vert_{L^2} \leq \Vert v_\mu^0 \Vert_{L^2}+t\Vert \theta_\mu^0 \Vert_{L^2}.
$$
Finally, integrating in time \eqref{ grad v l2}  and using the last estimate
\begin{eqnarray*}
\Vert v_\mu(t) \Vert_{L^2}^2+ 2\mu \int_0^t||\nabla v_\mu (\tau)||_{L^2}^2 d\tau &\le&  \Vert v^0_\mu \Vert^2_{L^2} +2t\Vert \theta^0_\mu \Vert_{L^2}\Vert v^0_\mu \Vert_{L^2}+t^2\|\theta^0_\mu \Vert^2_{L^2} \\ &\le& 2(\Vert \theta_\mu^0 \Vert_{L^2}+\Vert v^0_\mu \Vert_{L^2})^2(1+t^2). 
\end{eqnarray*}
{\bf(iii)} The classical $L^2-$estimate of $\omega_\mu-$equation
gives

$$
\frac12\frac {d}{dt} \Vert \omega_\mu(t) \Vert_{L^{2}}^2 + \mu \int_{\mathbb{R}^{2}}|\nabla \omega_\mu |^2 dx = \int_{\mathbb{R}^{2}} \partial_1 \theta_\mu \omega_\mu dx.
$$
On account  Cauchy-Schwartz provides   
$$
\frac12\frac{d}{dt}\Vert \omega_\mu(t) \Vert_{L^{2}}^2 +\mu\int_{\mathbb{R}^{2}}|\nabla \omega_\mu |^2 dx\le\|\nabla \theta_\mu(t)\|_{L^2}\|\omega_\mu(t) \|_{L^2}.
$$
Thus we have
$$
\frac12\frac{d}{dt}\Vert \omega_\mu(t) \Vert_{L^{2}} \le\|\nabla \theta_\mu(t)\|_{L^2}.
$$
Integrating in time and employ the Cauchy-Schwarz inequality with respect to time, one has  
$$ \Vert \omega_\mu(t)\Vert_{L^2} \leq  \Vert {\omega_\mu}^0\Vert_{L^2}+t^{\frac{1}{2}}\Vert\nabla\theta_\mu{\Vert}_{L^2_tL^2}  $$
Young's inequality gives
\begin{equation}\label{3.10}
 \Vert \omega_\mu(t)\Vert_{L^2} \leq  \Vert {\omega_\mu}^0\Vert_{L^2}+t+ \Vert\nabla\theta_\mu{\Vert}^2_{L^2_tL^2}. 
\end{equation} 
On the other hand, by exploring \eqref{3.6} for $p=2$, it holds
\begin{equation*}
\frac{1}{2} \frac{d}{dt} \Vert \theta_\mu(t) {\Vert}^2_{L^2} + \kappa^{-1}_0\Vert\nabla\theta_\mu(t){\Vert}^2_{L^2}  \leq 0.
\end{equation*}
It follows from integrating in time that
\begin{equation}\label{grad teta L^2}
\Vert\nabla\theta_\mu{\Vert}^2_{L^2_tL^2}  \leq  \kappa_0\Vert \theta^0_\mu {\Vert}^2_{L^2} .
\end{equation}
Plug the last  estimate in \eqref{3.10} we deduce that
$$\Vert \omega_\mu(t)\Vert_{L^2} \leq  \Vert {\omega_\mu}^0\Vert_{L^2}+ \kappa_0\Vert \theta_\mu^0\Vert^2_{L^2}+t $$ 

%{\bf(iv)} First case for $p\in ]2,+\infty[$ \\
%Similarly
%$$\frac {1}{p}\frac {d}{dt} \Vert \omega_\mu(t) \Vert_{L^{p}}^p + \mu (p-1)\int_{\mathbb{R}^{2}}|\nabla \omega_\mu |^2|\omega_\mu|^{p-2} dx = \int_{\mathbb{R}^{2}} \partial_1 \theta \omega_\mu|\omega_\mu|^{p-2} dx $$
%Therefore 
%$$\frac {1}{p}\frac {d}{dt} \Vert \omega_\mu(t) \Vert_{L^{p}}^p \leq \int_{\mathbb{R}^{2}} \partial_1 \theta_\mu \omega_\mu|\omega_\mu|^{p-2} dx$$
%Thanks to  H$\ddot{\textnormal{o}}$lder  inequality we obtain 
%$$\frac {d}{dt}  \Vert \omega_\mu(t) \Vert_{L^{p}} \leq \Vert \nabla \theta_\mu(t) \Vert_{L^{p}}  $$
%By integrating in time gives the desired estimate\\
%\textcolor{red}{case for $p=+\infty$ a direct result from maximum principle }\\
%Second case for $p=+\infty$ a direct result from maximum principle 
This completes the proof of the Proposition \ref{prop v}
\end{proof} 
At this stage, we need to bound the term $\nabla \theta_\mu$ in $L^1_t L^p$ space, that is $\theta_\mu$ in $L^1_t W^{1,p}$. For this reason, we explore the maximal regularity of the heat equation to gain also that $\theta_\mu $ is well-controlled in $ L^\eta W^{2,p}$.
\begin{prop}\label{prop theta}
Let $(v_\mu,\theta_\mu)$  be a smooth solution of  \eqref{Eq-3} which satisfying the assumptions of theorem. Then for $(\eta,p)\in[1,\infty[\times]2,\infty[ $ and $t\geq0$ we have
\begin{equation*}
\theta_\mu\in L^{\eta}_t W^{2,p}.
\end{equation*}
More precisely, 
\begin{equation*}
\| \nabla \theta_\mu\|_{L^{2\eta}_t L^{2p}} \leq C_0(1+t)^{\frac{7}{2}},\quad \|\nabla^2 \theta_\mu\|_{L^\eta_t L^{p}}\leq C _0(1+t)^{7}.
\end{equation*}
\end{prop}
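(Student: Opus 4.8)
The plan is to rewrite the temperature equation in the form of the transport-diffusion equation \eqref{PC1.0} and then apply Proposition \ref{prop2.10}. Writing the $\theta_\mu$-equation as
\[
\partial_t\theta_\mu + v_\mu\cdot\nabla\theta_\mu - \Delta\theta_\mu = \Delta\theta_\mu - \nabla\cdot\big(\kappa(\theta_\mu)\nabla\theta_\mu\big) = \nabla\cdot\big((1-\kappa(\theta_\mu))\nabla\theta_\mu\big),
\]
we obtain \eqref{PC1.0} with $\mu=1$ and forcing term $g=\nabla\cdot\big((1-\kappa(\theta_\mu))\nabla\theta_\mu\big)$. The idea is to absorb $g$ into the left-hand side using the smallness assumption \eqref{Assup-Th-1}: since $\|\kappa(\cdot)-1\|_{L^\infty}\le\epsilon_0$, the forcing has size comparable to $\epsilon_0\|\nabla^2\theta_\mu\|$ plus a term involving $\kappa'(\theta_\mu)\nabla\theta_\mu\cdot\nabla\theta_\mu$. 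First I would set up the bootstrap: apply Proposition \ref{prop2.10} with $s=2-2/r-$ something and $r=\eta$ (choosing the Besov indices so that $B_{p,?}^{s+2/\eta}\hookrightarrow W^{2,p}$ via Proposition \ref{Emb}(iii), which forces the relation $\frac1p+\frac2\eta<2$ appearing in the hypotheses of Theorem \ref{Th-1}), so that
\[
\|\theta_\mu\|_{\widetilde L^\eta_t B_{p,?}^{2}} \le Ce^{CV(t)}(1+t)^{\frac1\eta}\Big(\|\theta_\mu^0\|_{B_{p,?}^{2-\frac2\eta}} + \|g\|_{L^1_t B_{p,?}^{0}}\Big).
\]

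Next I would estimate $g$ in $L^1_t B_{p,?}^0$, or more conveniently in $L^1_t L^p$. By the product/composition rules and \eqref{Eq-2},
\[
\|g\|_{L^p} \lesssim \|(1-\kappa(\theta_\mu))\nabla^2\theta_\mu\|_{L^p} + \|\kappa'(\theta_\mu)|\nabla\theta_\mu|^2\|_{L^p} \lesssim \epsilon_0\|\nabla^2\theta_\mu\|_{L^p} + \|\nabla\theta_\mu\|_{L^{2p}}^2.
\]
Integrating in time and invoking Hölder, the first term contributes $\epsilon_0\|\nabla^2\theta_\mu\|_{L^\eta_tL^p}\,t^{1-1/\eta}$ which — for $\epsilon_0$ small enough relative to the constant from Proposition \ref{prop2.10} and after localizing the time interval or running a continuity argument — can be absorbed into the left-hand side. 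The square term $\|\nabla\theta_\mu\|_{L^{2p}}^2$ requires the mixed estimate $\|\nabla\theta_\mu\|_{L^{2\eta}_tL^{2p}}$, which one gets by interpolating between $\|\theta_\mu\|_{L^\infty_tL^p}$ (from Proposition \ref{prop v}(i)) and $\|\nabla^2\theta_\mu\|_{L^\eta_tL^p}$ using the Gagliardo–Nirenberg-type inequality $\|\nabla\theta_\mu\|_{L^{2p}}\lesssim\|\theta_\mu\|_{L^p}^{1/2}\|\nabla^2\theta_\mu\|_{L^p}^{1/2}$; this also explains the exponent bookkeeping that produces the powers $(1+t)^{7/2}$ and $(1+t)^7$. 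The factor $e^{CV(t)}$ is controlled because $V(t)=\int_0^t\|\nabla v_\mu\|_{L^\infty}\,d\tau$ and Theorem \ref{Th-1}(1) gives $\|\nabla v_\mu\|_{L^\infty}\le C_0e^{C_0\tau^8}$; more honestly one runs this estimate together with the vorticity estimate $\|\omega_\mu(t)\|_{L^\infty}\le C_0(1+t)^7$ mentioned in the remarks, so the polynomial growth $(1+t)^7$ enters through that $L^\infty$ bound on the vorticity rather than through $V(t)$ alone.

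Finally I would close the argument by collecting the estimates: $\|\theta_\mu\|_{L^\eta_tW^{2,p}}\lesssim Ce^{CV(t)}(1+t)^{1/\eta}\big(\|\theta_\mu^0\|_{B_{p,?}^{2-2/\eta}}+\epsilon_0\|\theta_\mu\|_{L^\eta_tW^{2,p}}t^{1-1/\eta}+\|\nabla\theta_\mu\|_{L^{2\eta}_tL^{2p}}^2\big)$, absorb the $\epsilon_0$-term, feed in the interpolation bound for the square term, and solve the resulting (Grönwall-type or polynomial) inequality to arrive at $\|\nabla^2\theta_\mu\|_{L^\eta_tL^p}\le C_0(1+t)^7$, whence $\|\nabla\theta_\mu\|_{L^{2\eta}_tL^{2p}}\le C_0(1+t)^{7/2}$ by interpolation. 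The main obstacle I anticipate is the absorption step: making precise that the $\epsilon_0$-term genuinely sits on the left (this needs either a global smallness of the constant $C$ in Proposition \ref{prop2.10}, which is not uniform in $t$ because of the $(1+t)^{1/\eta}$ prefactor, or a partition of $[0,T]$ into finitely many subintervals of controlled length on each of which the absorption is legitimate) and then patching the subinterval estimates back together, keeping track of how the constants compound into the stated growth rate. A secondary technical point is choosing the second Besov index correctly so that both the embedding $B_{p,?}^{2}\hookrightarrow W^{2,p}$ and the initial-data membership $\theta_\mu^0\in B_{p,r}^{2-2/r}$ are respected simultaneously, which is exactly where the condition $\frac1p+\frac2r<2$ is used.
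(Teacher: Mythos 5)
Your route is genuinely different from the paper's, and as written it has two gaps that are fatal rather than merely technical. The first is circularity in the growth rate. By keeping $v_\mu\cdot\nabla\theta_\mu$ on the left and invoking Proposition \ref{prop2.10}, every estimate you produce carries the factor $e^{CV_\mu(t)}$ with $V_\mu(t)=\int_0^t\|\nabla v_\mu(\tau)\|_{L^\infty}d\tau$. But Proposition \ref{prop theta} is an ingredient in the proof of the Lipschitz bound of Theorem \ref{theo-GP}: it feeds Corollary \ref{cor}, hence the $L^\infty$ bound on $\omega_\mu$ and the striated estimates. So you may not assume $\|\nabla v_\mu\|_{L^\infty}\le C_0e^{C_0t^8}$ at this stage (and the vorticity bound $\|\omega_\mu\|_{L^\infty}\le C_0(1+t)^7$ you fall back on is itself a consequence of this very proposition, and in any case does not control $V_\mu$). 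Even granting the Lipschitz bound, $e^{CV_\mu(t)}$ is doubly exponential in $t$, which is incompatible with the polynomial rates $(1+t)^{7/2}$ and $(1+t)^7$ the proposition asserts and which the later Gronwall argument for $\nabla v_\mu$ requires. The paper's proof is built precisely to avoid $V_\mu$: it treats the equation as a pure heat equation with $-\nabla\cdot(v_\mu\theta_\mu)$ as a source, estimates the Duhamel terms with Proposition \ref{op A} (whose constant is independent of $t$), and controls $\|v_\mu\|_{L^\infty_tL^\infty}$ through $\|v_\mu\|_{L^2}$ and $\|\omega_\mu\|_{L^{2p}}$ via Gagliardo--Nirenberg and Calder\'on--Zygmund, i.e.\ through quantities available from Proposition \ref{prop v} and the $L^{2p}$ vorticity estimate alone; the unknown $\|\nabla\theta_\mu\|_{L^{2\eta}_tL^{2p}}$ then enters only to the sublinear power $\frac{p}{2p-1}<1$ and is closed by Young's inequality.

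The second gap is the order of the two estimates. You propose to prove the $W^{2,p}$ bound first and recover $\|\nabla\theta_\mu\|_{L^{2\eta}_tL^{2p}}$ afterwards by interpolation. But then the quadratic source $\kappa'(\theta_\mu)|\nabla\theta_\mu|^2$ satisfies, by that same interpolation, $\||\nabla\theta_\mu|^2\|_{L^1_tL^p}\lesssim\|\theta_\mu\|_{L^\infty_tL^\infty}\|\nabla^2\theta_\mu\|_{L^1_tL^p}$: this is \emph{linear} in the unknown with constant of order $\|\theta_\mu^0\|_{L^\infty}$, which is not assumed small, so it cannot be absorbed. The paper resolves this by first closing the estimate $\|\nabla\theta_\mu\|_{L^{2\eta}_tL^{2p}}\le C_0(1+t)^{7/2}$ on its own (the first-order Duhamel term involves $v_\mu\theta_\mu$ with no derivative on $\theta_\mu$), so that in the second-order step the quadratic term $\mathrm{II}_2$ is already a known quantity of size $C_0(1+t)^7$.

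Two smaller points. Proposition \ref{prop2.10} requires $s\in]-1,1[$, and to land in $W^{2,p}$ with $r=\eta$ you need $s=2-2/\eta$, which restricts you to $\eta<2$, whereas the statement covers all $\eta\in[1,\infty[$. Also, the Gagliardo--Nirenberg inequality you want is $\|\nabla\theta_\mu\|_{L^{2p}}\lesssim\|\theta_\mu\|_{L^\infty}^{1/2}\|\nabla^2\theta_\mu\|_{L^p}^{1/2}$, not with $\|\theta_\mu\|_{L^p}^{1/2}$, which is dimensionally inconsistent in $\RR^2$. Your worry about absorbing the $\epsilon_0$-term against the $(1+t)^{1/\eta}$ prefactor is well placed and is another symptom of the same structural issue: with Proposition \ref{op A} in place of Proposition \ref{prop2.10} the absorption constant is uniform in $t$ and no subinterval decomposition is needed.
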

\begin{proof} We start by estimating the quantity $\nabla \theta_\mu$ in $L^{2\eta}_tL^{2p}$. For this purpose we write $\theta-$ variable as a solution of an adequate heat equation in the following way.
\begin{equation*}
\partial_{t}\theta_\mu+v_\mu\cdot\nabla \theta_\mu = \nabla\cdot(\kappa(\theta_\mu)\nabla \theta_\mu).
\end{equation*} 
A straightforward manupilation and on account $\nabla\cdot v_\mu=0$ yield
\begin{equation*}\label{equatioin theta vartion 2}
\partial_{t}\theta_{\mu}-\Delta\theta_{\mu}=-\nabla\cdot(v_\mu \theta_\mu) +\nabla\cdot \big((\kappa(\theta_\mu)-1)\nabla \theta_\mu\big).
\end{equation*}
Since $\Delta$ is a good infinitesimal generator for the heat semigroup $\Ss(t)=e^{t\Delta}$, so $\theta$ is expressed by Duhamel's formula,
\begin{equation}\label{Duh-form}
\theta_\mu (t,x)=  \Ss(t)\theta^0_\mu(x) -\int_0^t \mathbb{S}(t-\tau) \nabla\cdot(v_\mu  \theta_\mu)(\tau,x) d \tau+\int_0^t  \mathbb{S}(t-\tau) \nabla\cdot((\kappa(\theta_\mu)-1)\nabla \theta_\mu) (\tau,x) d \tau.
\end{equation}
Apply $\nabla$ to this formula to obtain
\begin{eqnarray}\label{F thita }
\nabla\theta_\mu (t,x)&=& \nabla\mathbb{S}(t) \theta^0_\mu(x) -\int_0^t \nabla \mathbb{S}(t-\tau)\big(\nabla\cdot(v_\mu   \theta_\mu)(\tau,x)\big) d \tau\\
\nonumber&&+\int_0^t  \nabla  \mathbb{S}(t-\tau)\nabla\cdot\big((\kappa(\theta_\mu)-1)\nabla \theta_\mu) (\tau,x)\big) d \tau. 
\end{eqnarray} 
Taking the $L^{2\eta}_tL^{2p}-$norm to \eqref{F thita }, so by integrating by parts over $\RR^2$, it follows
\begin{eqnarray} \label{I-0+I-1+I-2}
\Vert \nabla\theta_\mu  \Vert_{L^{2\eta}_tL^{2p}} &\leq& \big\Vert \nabla\mathbb{S}(t) \theta^0_\mu  \big\Vert_{L^{2\eta}_tL^{2p}}+\bigg\| \int_0^t  \nabla^2 \mathbb{S}(t-\tau) (v_  \mu   \theta_\mu) d\tau \bigg\|_{L^{2\eta}_t L^{2p}}
\nonumber \\&+&\bigg\| \int_0^t   \nabla^2 \mathbb{S}(t-\tau) ((\kappa(\theta_\mu)-1)\nabla \theta_\mu) d\tau \bigg\|_{L^{2\eta}_tL^{2p}} 
\nonumber \\
&\triangleq& \mathrm{I}_0+\mathrm{I}_1+\mathrm{I}_2.
\end{eqnarray}  
To treat $\mathrm{I}_0$ we require to apply Proposition \ref{A 1} with $s=\frac{1} {2\eta},r=2\eta$ and $p'=2p$ to write
\begin{equation*}
\mathrm{I}_0\triangleq\big\Vert \nabla\mathbb{S}(t) \theta^0_\mu  \big\Vert_{L^{2\eta}_tL^{2p}} \lesssim  \Vert\nabla\theta^0_\mu  \Vert_{\dot{B}_{{2p,2\eta}}^{-\frac{1}{\eta}}}.  
\end{equation*}
The fact that $\eta\ge1$ implies $2\eta\geq \eta$, so we have $ {\dot{B}_{{2p,\eta}}^{-\frac{1}{\eta}}} \hookrightarrow {\dot{B}_{{2p,2\eta}}^{-\frac{1}{\eta}}}$. Thus, in view of the continuity $\nabla: {\dot{B}_{{2p,\eta}}^{1-\frac{1}{\eta}}}\rightarrow {\dot{B}_{{2p,\eta}}^{-\frac{1}{\eta}}}$, we infer that
\begin{equation}\label{I-0} 
\mathrm{I}_0 \lesssim   \Vert\nabla\theta^0_\mu  \Vert_{\dot{B}_{{2p,\eta}}^{-\frac{1}{\eta}}} \lesssim \Vert \theta^0_\mu  \Vert_{\dot{B}_{{2p,\eta}}^{1-\frac{1}{\eta}}}
\lesssim \Vert \theta^0_\mu  \Vert_{B_{{2p,\eta}}^{1-\frac{1}{\eta}}}.
\end{equation}
In the setting, where $ \frac{1}{p} +\frac{1}{r} \leq 1$, we have immediately $r\geq \frac{p}{p-1}>1$. By taking $\eta =r $, then in accordance with  {\bf{(i)}}  and {\bf{(ii)}} in Proposition \ref{Emb}, one gets 
\begin{equation*}
B^{2-\frac{2}{r}}_{p,r}\hookrightarrow B^{1-\frac1r+\frac{1}{p}}_{p,r}\hookrightarrow B^{1-\frac{1}{r}}_{2p,r}.
\end{equation*}

For the case $ \frac{1}{p} +\frac{1}{r}>1$ we have $r> \eta>1$, with $\frac{1}{\eta}=\frac{1}{p} +\frac{2}{r}-1$, again {\bf{(i)}} and {\bf{(ii)}} in Proposition \ref{Emb} provide the embeddings 
\begin{equation*}
{B_{{p,r}}^{2-\frac{2}{r}}} \hookrightarrow {B_{{2p,r}}^{2-\frac{1}{p}-\frac{2}{r}}} \hookrightarrow {B_{{2p,r}}^{1-\frac{1}{\eta}}}.
\end{equation*}
Combining the last two assertions and plug them in \eqref{I-0}, it holds 
\begin{equation}\label{I-0-f}
\mathrm{I}_0 \lesssim  \Vert \theta^0_\mu  \Vert_{B_{{p,r}}^{2-\frac{2}{r}}}.
\end{equation}
For the term $\mathrm{I}_1$, we explore Proposition \ref{op A} and H\"older's inequality to conclude
\begin{eqnarray}\label{I-1}
\mathrm{I}_1\triangleq \bigg\| \int_0^t  \nabla^2 \mathbb{S}(t-\tau) (v_  \mu  \cdot \theta_\mu) d\tau \bigg\|_{L^{2\eta}_t L^{2p}} &\lesssim& \Vert v_\mu \theta_\mu  \Vert_{L^{2\eta}_tL^{2p}}\\
&\lesssim&\|v_{\mu}\|_{L^\infty_t L^\infty}\|\theta_{\mu}\|_{L_t^{2\eta}L^{2p}}.\nonumber 
\end{eqnarray}
Let us denote that the term $\Vert v_\mu \Vert_{L^{\infty}_tL^{\infty}}$ can be done by employing Gagliardo-Nirenberg inequality and 
Cald\'eron-Zygmung inequatlity stated in a Proposition \ref{C.Z}, i.e. for $p\in]2,\infty[$ we have
\begin{eqnarray*} 
\Vert v_\mu \Vert_{L^{\infty}_tL^{\infty}}  & \lesssim & \Vert v_\mu \Vert_{L^{\infty}_tL^{2}} ^{\frac{p-1}{2p-1}} \Vert \nabla v_\mu  \Vert_{L^{\infty}_tL^{2p}}^{\frac{p}{2p-1}} \\
& \lesssim &  \Vert v_\mu \Vert_{L^{\infty}_tL^{2}} ^{\frac{p-1}{2p-1}}   \Vert \omega_\mu  \Vert_{L^{\infty}_tL^{2p}}^{\frac{p}{2p-1}}. 
\end{eqnarray*}
For the term $\Vert \omega_\mu  \Vert_{L^{\infty}_tL^{2p}}^{\frac{p}{2p-1}}$, we make use the classical $L^{2p}$ estimate for $\omega_{\mu}-$equation to get
\begin{equation*}
\Vert \omega_\mu(t)\Vert_{L^{2p}} \leq  \Vert {\omega}_\mu^0\Vert_{L^{2p}}+\Vert \nabla\theta_\mu\Vert_{L^1_tL^{2p}},
\end{equation*}
combined with the last estimate and {\bf(ii)} in Proposition \ref{prop v}, it happens  
\begin{equation*}\label{I-1-1} 
 \Vert v_\mu \Vert_{L^{\infty}_tL^{\infty}}  \leq  C_0(1+t)^{\frac{p-1}{2p-1}}    \Big(\Vert \omega_\mu^0 \Vert_{L^{2p}} ^{\frac{p}{2p-1}} +\Vert \nabla \theta_\mu  \Vert_{L^{1}_tL^{2p}}^{\frac{p}{2p-1}}\Big).
\end{equation*}
Substitute  the last estimate in  \eqref{I-1} we get
\begin{equation}\label{I-1-I} 
\mathrm{I}_1 \leq  C_0(1+t)^{\frac{p-1}{2p-1}}   \Big(\Vert \omega_\mu^0 \Vert_{L^{2p}} ^{\frac{p}{2p-1}} +\Vert \nabla \theta_\mu  \Vert_{L^{1}_tL^{2p}}^{\frac{p}{2p-1}}\Big)\|\theta_{\mu}\|_{L_t^{2\eta}L^{2p}}.
\end{equation}
To close our claim, it remains to bound the two terms $\Vert \nabla \theta_\mu  \Vert_{L^{1}_tL^{2p}}^{\frac{p}{2p-1}}$ and $\|\theta_{\mu}\|_{L_t^{2\eta}L^{2p}}$. For the first one,  H\"older's inequality with respect to time allows us to write
\begin{equation*}
\Vert \nabla \theta_\mu  \Vert_{L^{1}_tL^{2p}}^{\frac{p}{2p-1}}\le t^{\frac{2\eta-1}{2\eta}\frac{p}{2p-1}}\|\nabla\theta_\mu\|_{L^{2\eta}_t L^{2p}}^{\frac{p}{2p-1}},
\end{equation*}
whereas, for the second one is derived as follows
\begin{equation*}
\|\theta_{\mu}\|_{L_t^{2\eta}L^{2p}}\le t^{\frac{1}{2\eta}}\|\theta_{\mu}\|_{L_t^{\infty}L^{2p}}.
\end{equation*}
Gathering the last two estimates and plugging them in \eqref{I-1-I}, it holds 
\begin{equation*} 
\mathrm{I}_1 \leq  C_0(1+t)^{\frac{p-1}{2p-1} } t^{\frac{1}{2\eta}}  \Big(\Vert \omega_\mu^0 \Vert_{L^{2p}} ^{\frac{p}{2p-1}} +t^{\frac{ 2\eta-1}{2\eta} {\frac{p}{2p-1}} }   \Vert \nabla \theta_\mu  \Vert_{L^{2\eta}_tL^{2p}}^{\frac{p}{2p-1}}\Big)\|\theta_{\mu}\|_{L_t^{\infty}L^{2p}}.
\end{equation*}
Therefore 
\begin{eqnarray}\label{I1_1_1_1}
\mathrm{I}_1  &\leq &  C_0(1+t)^{\frac{p-1}{2p-1}+\frac{1}{2\eta}} (1+t)^{\frac{ 2\eta-1}{2\eta} {\frac{p}{2p-1}}} \Vert \nabla \theta_\mu  \Vert^{\frac{p}{2p-1}} _{L^{2\eta}_tL^{2p}} \|\theta_{\mu}\|_{L_t^{\infty}L^{2p}} \nonumber  \\ &\leq &  C_0(1+t)^{\frac{p-1}{2p-1}+\frac{1}{2\eta} +{\frac{ 2\eta-1}{2\eta}  {\frac{p}{2p-1}}} } \Vert \nabla \theta_\mu  \Vert^{\frac{p}{2p-1}} _{L^{2\eta}_tL^{2p}} \|\theta_{\mu}\|_{L_t^{\infty}L^{2p}}  .  
\end{eqnarray}  
On the one hand, by hypothesis $\theta^0_\mu\in B^{2-\frac2r}_{p,r}$, however, the assumption $\frac1p+\frac2r < 2$ leads to ${B_{{p,r}}^{2-\frac{2}{r}}}  \hookrightarrow L^{2p} $. Meaning that $\theta^0_\mu\in L^{2p}$. On the other hand, {\bf(i)}-Proposition \ref{prop v} ensures that $\|\theta_\mu\|_{L^\infty_t L^{2p}}$ is bounded, that is $\|\theta_\mu\|_{L^\infty_t L^{2p}}\le \|\theta_{\mu}^0\|_{L^{2p}}$. Finally,  by setting $r=\frac{2p-1}{p-1}, r'=\frac{2p-1}{p}$ and applying Young's inequality, one may write
\begin{equation}\label{I-1-f}
\mathrm{I}_1\le C_0\alpha^{\frac{2p-1}{p-1}}\Big(\frac{p-1}{2p-1}\Big)(1+t)^{ 1+\frac{1}{2\eta}\frac{2p-1}{p-1}+ \frac{ 2\eta-1}{2\eta}  \frac{p}{p-1}}  +\frac{1}{\alpha^{\frac{2p-1}{p}}}\Big(\frac{p}{2p-1}\Big)\Vert \nabla \theta_\mu  \Vert_{L^{2\eta}_t L^{2p}}.
\end{equation}
For $ \alpha = (\frac{4p}{2p-1})^{\frac{p}{2p-1}}$, it follows that 
\begin{eqnarray}\label{I-1-f}
\mathrm{I}_1&\le& C_0(\frac{4p}{2p-1})^{\frac{p}{p-1}}\Big(\frac{p-1}{2p-1}\Big)(1+t)^{1+\frac{1}{2\eta}\frac{2p-1}{p-1}+ \frac{ 2\eta-1}{2\eta}  \frac{p}{p-1}}  +\frac{1}{4}\Vert \nabla \theta_\mu  \Vert_{L^{2\eta}_t L^{2p}} \nonumber \\&\le& C_pC_0(1+t)^{1+\frac{1}{2\eta}\frac{2p-1}{p-1}+ \frac{ 2\eta-1}{2\eta}  \frac{p}{p-1}}  +\frac{1}{4}\Vert \nabla \theta_\mu  \Vert_{L^{2\eta}_t L^{2p}},
\end{eqnarray}
with $ C_p=(\frac{4p}{2p-1})^{\frac{p}{p-1}}\frac{p-1}{2p-1}$.

\hspace{0.5cm}Moving to estimate $\mathrm{I}_2$. Again Proposition \ref{op A} combined with assumption \eqref{Assup-Th-1} enables us to obtain
\begin{eqnarray}\label{I-2-f} 
\mathrm{I}_2\triangleq\bigg\| \int_0^t   \nabla^2 \mathbb{S}(t-\tau) ((\kappa(\theta_\mu)-1)\nabla \theta_\mu) d\tau \bigg\|_{L^{2\eta}_tL^{2p}} &\le& \Vert \kappa(\cdot )-1 \Vert_{L^\infty}  \Vert \nabla \theta_\mu  \Vert_{L^{2\eta}_tL^{2p}} \\
&\lesssim & \frac{1}{4}\Vert \nabla \theta_\mu  \Vert_{L^{2\eta}_tL^{2p}}.\nonumber
\end{eqnarray}
Finally, collecting \eqref{I-0-f}, \eqref{I-1-f} and \eqref{I-2-f} and plug them in \eqref{I-0+I-1+I-2} to infer that
\begin{equation*}
\Vert \nabla \theta_\mu  \Vert_{L^{2\eta}_tL^{2p}} \leq C_0(1+t)^{1+\frac{1}{2\eta}\frac{2p-1}{p-1}+ \frac{ 2\eta-1}{2\eta}  \frac{p}{p-1}} +\frac{1}{2}\Vert \nabla \theta_\mu  \Vert_{L^{2\eta}_tL^{2p}}.
\end{equation*}
Therefore
\begin{equation*}
\Vert \nabla \theta_\mu  \Vert_{L^{2\eta}_tL^{2p}} \le C_0(1+t)^{1+\frac{1}{2\eta}\frac{2p-1}{p-1}+ \frac{ 2\eta-1}{2\eta}  \frac{p}{p-1}}. 
\end{equation*} 
The function $(\eta,p)\longmapsto 1+\frac{1}{2\eta}\frac{2p-1}{p-1}+ \frac{ 2\eta-1}{2\eta}  \frac{p}{p-1} =\frac{2p-1}{p-1}+\frac{1}{2\eta}$ admits  $7/2$ as a maximum for   $(\eta,p)\in [1,\infty[\times]2,\infty[$  then we finally obtain  
\begin{equation}\label{grad-theta}
\Vert \nabla \theta_\mu  \Vert_{L^{2\eta}_tL^{2p}} \le C_0(1+t)^{\frac{7}{2}}. 
\end{equation} 
Now, we come back to estimate $\nabla^2 \theta_\mu$ in ${L^{\eta}_tL^{p}}$. For this aim, we develop the Duhamel formula \eqref{Duh-form} to write
\begin{eqnarray*}
\theta_\mu (t,x)&= & \mathbb{S}(t) \theta^0_\mu(x) -\int_0^t \mathbb{S}(t-\tau) v_\mu \cdot \nabla  \theta_\mu(\tau,x) d \tau  +\int_0^t  \mathbb{S}(t-\tau) \kappa'(\theta_\mu)(\nabla \theta_\mu )^2(\tau,x) d \tau \nonumber \\&+&\int_0^t  \mathbb{S}(t-\tau) ((\kappa(\theta_\mu)-1)\Delta \theta_\mu) (\tau,x) d \tau. 
\end{eqnarray*}
Apply $\nabla^2$ operator to this equation and take the $L^{\eta}_tL^{p}-$norm, it happens
\begin{eqnarray} \label{II-0+II-1+II-2}
\Vert \nabla ^2 \theta_\mu  \Vert_{L^{\eta}_tL^{p}} &\leq & \big\Vert \nabla ^2 \mathbb{S}(t)    \theta^0_\mu d\tau \big\Vert_{L^{\eta}_tL^{p}}+\bigg\Vert \int_0^t \nabla^2  \mathbb{S}(t-\tau) (v_  \mu  \cdot \nabla \theta_\mu) d\tau  \bigg\Vert_{L^{\eta}_tL^{p}} \nonumber \\&+&\bigg\Vert \int_0^t \nabla^2 \mathbb{S}(t-\tau)\kappa'(\theta_\mu)(\nabla \theta_\mu )^2 d\tau \bigg\Vert_{L^{\eta}_tL^{p}} + \bigg\Vert \int_0^t  \nabla ^2 \mathbb{S}(t-\tau) ((\kappa(\theta_\mu)-1)\Delta \theta_\mu) d\tau \bigg\Vert_{L^{\eta}_tL^{p}}\nonumber \\
&\triangleq &\mathrm{II} _0+\mathrm{II}_1+\mathrm{II}_2 +\mathrm{II}_3.
\end{eqnarray}  
To estimate $\mathrm{II}_0$, combine Proposition \ref{A 1} with the fact that $ \nabla^2 :{\dot{B}_{{p,\eta}}^{2-\frac{2}{\eta}}} \rightarrow   {\dot{B}_{{p,\eta}}^{\frac{-2}{\eta}}}  $ is a continuous map to write  
\begin{eqnarray*} 
\mathrm{II} _0 \triangleq\big\Vert \nabla ^2 \mathbb{S}(t)    \theta^0_\mu d\tau \big\Vert_{L^{\eta}_tL^{p}}&\lesssim &\Vert \nabla^2 \theta^0_\mu  \Vert_{\dot{B}_{{p,\eta}}^{\frac{-2}{\eta}}}.   
\end{eqnarray*}
The case where $ \frac{1}{p} +\frac{1}{r} \leq 1$ we have $r \geq \frac{p}{p-1}>1$. By taking $\eta =r $ we get
\begin{eqnarray*} 
\mathrm{II} _0 \triangleq\big\Vert \nabla ^2 \mathbb{S}(t)    \theta^0_\mu d\tau \big\Vert_{L^{\eta}_tL^{p}}&\lesssim &\Vert \nabla^2 \theta^0_\mu  \Vert_{\dot{B}_{{p,\eta}}^{\frac{-2}{\eta}}}     \nonumber \\ &\lesssim &   \Vert \theta^0_\mu  \Vert_{\dot B_{{p,\eta}}^{2-\frac{2}{\eta}}}    \\&\lesssim&   \Vert \theta^0_\mu  \Vert_{ B_{{p,\eta}}^{2-\frac{2}{\eta}}}  ,  
\end{eqnarray*} 
while, for $ \frac{1}{p} +\frac{1}{r}> 1$ we have $\frac{1}{\eta}=\frac1p+\frac2r-1$ which implies that $1<\eta<r$ and $ 2-\frac{2}{\eta} < 2-\frac{2}{r}$. Thus, {\bf{(i)}-}Proposition \ref{Emb} yields $ {B_{{p,\eta}}^{2-\frac{2}{r}}} \hookrightarrow {B_{{p,r}}^{1-\frac{2}{\eta}}} $ meaning that
\begin{equation}\label{II-0}
\mathrm{II}_0 \lesssim  \Vert \theta^0_\mu  \Vert_{B_{{p,r}}^{2-\frac{2}{r}}}.
\end{equation}
The term $\mathrm{II}_1$ is a consequence of Proposition  \eqref{op A}, H\"older inequality , that is
\begin{eqnarray*}
\mathrm{II}_1\triangleq \bigg\Vert \int_0^t \nabla^2  \mathbb{S}(t-\tau) (v_  \mu  \cdot \nabla \theta_\mu) d\tau  \bigg\Vert_{L^{\eta}_tL^{p}} \leq  C\Vert v_  \mu  \cdot \nabla \theta_\mu  \Vert_{L^{\eta}_tL^{p}} \leq   \Vert  v_  \mu  \Vert_{L^{2\eta}_tL^{2p}} \Vert  \nabla \theta_\mu  \Vert_{L^{2\eta}_tL^{2p}}. 
\end{eqnarray*}
For the term $\Vert  v_  \mu  \Vert_{L^{2\eta}_tL^{2p}}$, a particular Gagliardo-Nirenberg inequality and Proposition \ref{C.Z} leading to
\begin{equation*}
\Vert  v_  \mu  \Vert_{L^{2p}(\RR^2)}\le C\|v\|^{\frac1p}_{L^2(\RR^2)}\|\nabla v\|^{1-\frac1p}_{L^2(\RR^2)}\le C\|v\|^{\frac1p}_{L^2(\RR^2)}\|\omega\|^{1-\frac1p}_{L^2(\RR^2)}.
\end{equation*}
Putting together the last two estimates, it follows 
\begin{equation}\label{II*1}
\mathrm{II}_1 \lesssim  \|v\|^{\frac1p}_{L^\infty_t L^2(\RR^2)}\|\omega\|^{1-\frac1p}_{L^\infty_t L^2(\RR^2)}\Vert  \nabla \theta_\mu  \Vert_{L^{2\eta}_tL^{2p}}. 
\end{equation}
From {\bf(ii), {\bf(iii)}}-Proposition \ref{prop v}, we have 
\begin{equation*}
\|v\|^{\frac1p}_{L^2(\RR^2)} \leq  C_0(1+t)^{\frac{1}{p}}, \quad \|\omega\|^{1-\frac1p}_{L^2(\RR^2)} \leq C_0(1+t)^{1-\frac{1}{p} }.
\end{equation*}

Plugging the last estimate in \eqref{II*1}, so, in view of  \eqref{grad-theta} we deduce that
\begin{equation}\label{II-1}
\mathrm{II}_1 \leq  C_0(1+t)^{\frac{9}{2}}.
\end{equation}
Step by step Proposition \ref{op A} and using \eqref{grad-theta} yield
\begin{equation}\label{II-2}
\mathrm{II}_2  \leq  C \Vert \kappa'(\theta)\Vert_{L^\infty}  \Vert \nabla \theta_\mu  \Vert_{L^{2\eta}_tL^{2p}}^2 \leq  C_0(1+t)^{7} .
\end{equation}
and 
\begin{equation}\label{II-3}
 \mathrm{II}_3  \leq  \Vert \kappa(\cdot )-1 \Vert_{L^\infty}  \Vert \nabla^2 \theta_\mu  \Vert_{L^{\eta}_tL^{p}} \leq   \frac{1}{2} \Vert \nabla^2 \theta_\mu  \Vert_{L^{\eta}_tL^{p}}.
\end{equation}
At this stage, collecting \eqref{II-0} , \eqref{II-1} , \eqref{II-2}, and \eqref{II-3} and plug them in \eqref{II-0+II-1+II-2} we conclude that
\begin{equation}\label{grad 2 theta} 
\Vert \nabla^2 \theta_\mu \Vert_{L^{\eta}_tL^{p}} \le C_0(1+t)^{7} . 
\end{equation}
This ends the proof of Proposition \ref{prop theta}.
\end{proof}

As a consequence of the previous results, we shall control the quantity $\nabla \theta_\mu$ and $\omega_\mu$ in $L^1_t L^\infty$ and ${L^\infty_t L^\infty}$ space respectively. Especially, we will establish.
\begin{cor}\label{cor}
Let $(\omega_\mu,\theta _\mu )$ be a smooth solution of the system \eqref{Eq-5}. Then the following assertions are hold.
\begin{enumerate}
\item[{\bf(i)}] For every  $ t \geq 0 $ and $ p\in ]2,+\infty[$, we have
\begin{equation*}  
\Vert \nabla \theta_\mu \Vert_{L^{1}_t L^{\infty}}\leq  C_0(1+t)^{7}.
\end{equation*}
%\item[(2)] for every  $ t \geq 0 $ and $ p\in ]2,+\infty[$   we have 
%$$\Vert  \theta_\mu \Vert_{L^1_tW^{2,p}} \le C_0(1+t)^{4+\frac{3p-1}{p-1}}.$$
%\item[(3)] for every  $ t \geq 0 $ and $ p\in ]2,+\infty[$   we have 
%$$\Vert \nabla \theta_\mu \Vert_{L^{ 2}_t L^{p}} \leq C_0(1+t)^{4+\frac{5p-1}{2(p-1)}}.$$ 
\item[{\bf(ii)}] For every  $ t \geq 0 $ and $ p\in ]2,+\infty[$ we have 
\begin{equation*}
\Vert \omega_\mu(t)\Vert_{L^\infty} \le C_0(1+t)^{7}.
\end{equation*} 
\end{enumerate}
\begin{proof}
{\bf(i)} Exploring the Gagliardo-Nirenberg inequality to obtain 
\begin{eqnarray*}
\Vert \nabla \theta_\mu (t)\Vert_{L^\infty} &\lesssim &  \Vert \nabla  \theta_\mu (t) \Vert_{L^{2}}^{\frac{p-2}{2p-2}} \Vert \nabla^2  \theta_\mu (t) \Vert_{L^{p}}^{\frac{p}{2p-2}},  
\end{eqnarray*}
so, Young's inequality leads  
\begin{eqnarray}\label{G-th}
\Vert \nabla \theta_\mu (t)\Vert_{L^\infty} &\le &  \Big({\frac{p-2}{2p-2}}\Big) \Vert \nabla  \theta_\mu (t) \Vert_{L^{2}} +\Big({\frac{p}{2p-2}}\Big)  \Vert \nabla^2  \theta_\mu (t) \Vert_{L^{p}}  \\ & \le & \nonumber \Vert \nabla  \theta_\mu (t) \Vert_{L^{2}}+  \Vert \nabla^2  \theta_\mu (t) \Vert_{L^{p}}.
\end{eqnarray}
Integrating in time over $[0,t]$ and make use the Cauchy-Schwarz inequality with respect to time we get 
\begin{eqnarray*}
\Vert \nabla \theta_\mu \Vert_{L^{1}_t L^\infty} &\le &    t^{\frac{1}{2}}\Vert \nabla  \theta_\mu  \Vert_{L^{2}_t L^{2}}+  \Vert \nabla^2  \theta_\mu  \Vert_{L^{1}_t L^{p}}.
\end{eqnarray*}
In particular, \eqref{grad 2 theta} for $\eta=1$ and \eqref{grad teta L^2} yield 
\begin{equation*}\label{3.83}
\Vert \nabla \theta_\mu \Vert_{L^{1}_t L^\infty} \le  t^{\frac{1}{2}} \kappa^{\frac{1}{2}}_0 \Vert \theta^0_\mu \Vert_{ L^{2}} + C_0(1+t)^{7} . 
\end{equation*}
Thus we have 
\begin{equation}\label{gred teta L infine}
\Vert \nabla \theta_\mu \Vert_{L^{1}_t L^\infty} \le    C_0(1+t)^{7} .
\end{equation}
%(2) Similarly
%\begin{eqnarray*}
%\Vert \nabla \theta_\mu(t) \Vert_{L^{p}} &\le& \Vert \nabla \theta_\mu(t) \Vert_{L^{2}}^{\frac{2}{p}}\Vert \nabla \theta_\mu(t) \Vert_{L^{\infty}} ^{1-\frac{2}{p}} \\ &\le& \Vert \nabla \theta_\mu(t) \Vert_{L^{2}}+\Vert \nabla \theta_\mu(t) \Vert_{L^{\infty}}.
%\end{eqnarray*}
%It follows that ,
%\begin{eqnarray*}
%\Vert \nabla \theta_\mu(t) \Vert_{L^1_tL^{p}} \le  t^{\frac{1}{2}} \Vert \nabla \theta_\mu \Vert_{L^2_tL^{2}} +\Vert \nabla \theta_\mu \Vert_{L^1_tL^{\infty}}.
%\end{eqnarray*}
%Thanks to \eqref{grad teta L^2} and \eqref{gred teta L infine} we end up with 
%\begin{eqnarray}\label{grad teta L^p}
%\Vert \nabla \theta_\mu(t) \Vert_{L^1_tL^{p}} \le  C_0(1+t)^{4+\frac{3p-1}{p-1}} .
%\end{eqnarray}
%Moreover combined with the last estimate and {\bf(i)} in Proposition \eqref{prop v} and \eqref{grad 2 theta} we deduce that  
%\begin{equation}
%\Vert  \theta_\mu \Vert_{L^1_tW^{2,p}} \le \Vert  \theta_\mu^0 \Vert_{L^{p}} + C_0(1+t)^{4+\frac{3p-1}{p-1}} \le C_0(1+t)^{4+\frac{3p-1}{p-1}}.
%\end{equation}
%(3)Taking the $L^{2}_t$ -norm to \eqref{3.83} after and  using \eqref{grad teta L^2}  and  \eqref{grad 2 theta} ,it happens  
%\begin{eqnarray}\label{3.40}
%\Vert \nabla \theta_\mu \Vert_{L^{2}_tL^p}  &\le& \Vert \nabla \theta_\mu \Vert_{L^{2}_tL^{2}}+\Vert \nabla \theta_\mu \Vert_{L^{2}_tL^{\infty}} \\ &\le&  \kappa_0 \Vert \nabla  \theta^0_\mu \Vert^2_{ L^{2}} +   C_0(1+t)^{4+\frac{5p-1}{2(p-1)}}\\ &\le&  C_0(1+t)^{4+%\frac{5p-1}{2(p-1)}}  .
%\end{eqnarray} 
{\bf(ii)} The maximum principal for $\omega_\mu-$equation enubles us to write 
\begin{equation*}
\Vert \omega_\mu(t)\Vert_{L^\infty} \leq  \Vert {\omega}_\mu^0\Vert_{L^\infty} +\Vert \nabla\theta_\mu\Vert_{L^1_tL^\infty},
\end{equation*}
combined with \eqref{gred teta L infine} gives the desired estimate, so, the proof of Corollary is completed.
\end{proof}
\end{cor}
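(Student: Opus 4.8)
The plan is to obtain both bounds as fairly direct consequences of Proposition~\ref{prop theta}, the energy estimates of Proposition~\ref{prop v}, and the smoothing estimate \eqref{grad teta L^2}, combined with an interpolation inequality and the maximum principle for the vorticity equation.

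For item \textbf{(i)} I would first convert the available controls $\nabla^2\theta_\mu\in L^p$ and $\nabla\theta_\mu\in L^2$ into an $L^\infty$ bound for $\nabla\theta_\mu$ by means of the two‑dimensional Gagliardo--Nirenberg inequality
\[
\Vert\nabla\theta_\mu(t)\Vert_{L^\infty}\lesssim \Vert\nabla\theta_\mu(t)\Vert_{L^2}^{\frac{p-2}{2p-2}}\,\Vert\nabla^2\theta_\mu(t)\Vert_{L^p}^{\frac{p}{2p-2}},\qquad p\in\,]2,\infty[,
\]
and then, since the two exponents sum to $1$, splitting the right‑hand side by weighted Young (AM--GM) into $\Vert\nabla\theta_\mu(t)\Vert_{L^2}+\Vert\nabla^2\theta_\mu(t)\Vert_{L^p}$. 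Integrating in time on $[0,t]$ and applying Cauchy--Schwarz in the time variable to the first term reduces the claim to bounding $t^{1/2}\Vert\nabla\theta_\mu\Vert_{L^2_tL^2}+\Vert\nabla^2\theta_\mu\Vert_{L^1_tL^p}$. The first piece is handled by \eqref{grad teta L^2}, which gives $\Vert\nabla\theta_\mu\Vert_{L^2_tL^2}^2\le\kappa_0\Vert\theta^0_\mu\Vert_{L^2}^2$ and hence is $O(C_0\,t^{1/2})$; the second is exactly Proposition~\ref{prop theta} specialized to $\eta=1$, yielding $C_0(1+t)^7$. As the first contribution is of strictly lower order in $t$, both are absorbed into $C_0(1+t)^7$.

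For item \textbf{(ii)} I would invoke the maximum principle for the transport--diffusion equation $\partial_t\omega_\mu+v_\mu\cdot\nabla\omega_\mu-\mu\Delta\omega_\mu=\partial_1\theta_\mu$ satisfied by the vorticity in \eqref{Eq-5}: since $v_\mu$ is divergence free and $-\mu\Delta$ is order preserving, Duhamel's representation yields
\[
\Vert\omega_\mu(t)\Vert_{L^\infty}\le \Vert\omega_\mu^0\Vert_{L^\infty}+\int_0^t\Vert\partial_1\theta_\mu(\tau)\Vert_{L^\infty}\,d\tau\le\Vert\omega_\mu^0\Vert_{L^\infty}+\Vert\nabla\theta_\mu\Vert_{L^1_tL^\infty},
\]
using $|\partial_1\theta_\mu|\le|\nabla\theta_\mu|$. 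Recalling that $\omega_\mu^0$ is the characteristic function of a bounded domain, hence lies in $L^\infty$, and inserting item \textbf{(i)} gives $\Vert\omega_\mu(t)\Vert_{L^\infty}\le C_0(1+t)^7$.

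As for the main difficulty: essentially all of the analytic work has already been expended in Proposition~\ref{prop theta}, where the maximal smoothing of the heat semigroup is pushed through against the quadratic nonlinearity $\kappa'(\theta_\mu)(\nabla\theta_\mu)^2$ and the near‑identity term $(\kappa(\theta_\mu)-1)\Delta\theta_\mu$, the latter closed only thanks to the smallness assumption \eqref{Assup-Th-1}. The present corollary is mechanical by comparison; the sole point requiring care is to track the polynomial power of $t$ so that the interpolation‑plus‑time‑integration step in \textbf{(i)} does not degrade the exponent beyond the $(1+t)^7$ inherited from $\Vert\nabla^2\theta_\mu\Vert_{L^1_tL^p}$, and to verify that the Gagliardo--Nirenberg and Young exponents are admissible for every $p\in\,]2,\infty[$.
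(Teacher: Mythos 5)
Your proposal is correct and follows essentially the same route as the paper: Gagliardo--Nirenberg interpolation between $\Vert\nabla\theta_\mu\Vert_{L^2}$ and $\Vert\nabla^2\theta_\mu\Vert_{L^p}$ with the exponents $\frac{p-2}{2p-2}$ and $\frac{p}{2p-2}$, Young's inequality, time integration with Cauchy--Schwarz, and the inputs \eqref{grad teta L^2} and Proposition~\ref{prop theta} at $\eta=1$ for part (i), followed by the maximum principle for the transport--diffusion vorticity equation for part (ii). No gaps; the exponent bookkeeping and the admissibility of the interpolation for $p\in\,]2,\infty[$ are exactly as in the paper.
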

\hspace{0.5cm}By exploring the previous preparatory part, in particular, the Proposition \ref{G-V}, we shall prove Theorem \ref{Th-1} in the more general case. More precisely, we will establish the following theorem.
\begin{Theo}\label{theo-GP}
Let $0<\EE<1, X_0$ be a family of admissible vector fields and $v_{\mu}^0$ be a free-divergence vector field in the sense that $\omega^0_{\mu}\in L^2\cap C^{\EE}(X_0)$. Let $  \theta_\mu^0 \in {L^{2}}\cap B_{p,r}^{2-\frac{2}{r}}$ with $(p,r)\in ]2, \infty[\times]1,\infty[$ be such that $\frac1p+\frac2r\le1 $, , then for $\mu\in]0,1[$ the system \eqref{Eq-3} admits a unique global solution 
\begin{equation*}
(v_\mu,\theta_\mu)\in L^\infty\big([0,T];\Lip\big)\times L^\infty\big([0,T];L^2\big)\cap L^{\eta}\big([0,T];W^{2,p}\big),\quad\eta >1. 
\end{equation*}
More precisely,
\begin{equation*}
\| \nabla v_\mu\|_{L^\infty_t L^{\infty}}\leq  C_0e^{C_0t^{8}}   . 
\end{equation*}
Furthermore,
\begin{equation*}
\| \omega_\mu\|_{L^\infty_tC^{\epsilon}(X_t)}+{\widetilde{\|}} X_{\lambda }\|_{L^\infty_t C^{\epsilon}(X_t)} +\|  \Psi_\mu \|_{{L^\infty_tC^\epsilon}(X_t)} \leq   C_0e^{\exp C_0t^{8}}.
\end{equation*}
\end{Theo}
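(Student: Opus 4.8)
The proof follows Chemin's striated-regularity paradigm, the two genuinely new ingredients being the viscous term $-\mu\Delta$ in the vorticity equation and the coupling with the temperature. Set $V(t)\triangleq\int_0^t\|\nabla v_\mu(\tau)\|_{L^\infty}\,d\tau$ and $D_t^\mu\triangleq\partial_t+v_\mu\cdot\nabla-\mu\Delta$. The a priori bounds of Propositions~\ref{prop v}, \ref{prop theta} and Corollary~\ref{cor} are in hand; in particular $\|\omega_\mu\|_{L^\infty_tL^\infty}+\|\nabla\theta_\mu\|_{L^1_tL^\infty}\le C_0(1+t)^7$, $\|\omega_\mu\|_{L^\infty_tL^2}\le C_0(1+t)$, and $\theta_\mu\in L^\eta_tW^{2,p}$. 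The strategy is to produce a control of $\|\omega_\mu\|_{C^\EE(X_t)}$ which, once inserted in the stationary logarithmic estimate of Theorem~\ref{The estimate log}, closes a Gronwall inequality for $V$.

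\emph{Step 1: the admissible family $X_t$.} Let $X_t$ be the push-forward \eqref{pf} of an admissible family $X_0$; in the vortex-patch case $X_0$ consists of divergence-free fields tangent to $\partial\Omega^0$, completed so that $I(X_0)>0$, and Proposition~\ref{G-V} gives $\omega^0_\mu\in C^\EE(X_0)$. By Proposition~\ref{com} each component of $X_t$ solves \eqref{tr}, $\partial_tX_t+v_\mu\cdot\nabla X_t=\partial_{X_t}v_\mu$, while a short computation using $\Div v_\mu=0$ shows that $\Div X_t$ is merely transported, $(\partial_t+v_\mu\cdot\nabla)\Div X_t=0$. Hence Proposition~\ref{prop2.9} with $s=\EE$ (so that $C^\EE=B^\EE_{\infty,\infty}$) yields
\[
\|X_t\|_{C^\EE}+\|\Div X_t\|_{C^\EE}\le Ce^{CV(t)}\Bigl(\|X_0\|_{C^\EE}+\|\Div X_0\|_{C^\EE}+\int_0^t e^{-CV(\tau)}\|\partial_{X_\tau}v_\mu\|_{C^\EE}\,d\tau\Bigr),
\]
and a commutator estimate following \cite{Chemin2} for $v_\mu=\nabla^\perp\Delta^{-1}\omega_\mu$ bounds $\|\partial_{X_t}v_\mu\|_{C^\EE}$ by $C\bigl(\|\omega_\mu\|_{L^\infty}(\|X_t\|_{C^\EE}+\|\Div X_t\|_{C^\EE})+\|\partial_{X_t}\omega_\mu\|_{C^{\EE-1}}\bigr)$, which is what couples the size of $X_t$ to the striated regularity of $\omega_\mu$. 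Finally, from $X_t\circ\Psi_\mu=X_0\cdot\nabla\Psi_\mu$ and $\|(\nabla\Psi_\mu)^{\pm1}\|_{L^\infty}\le e^{V(t)}$ one gets the non-degeneracy $I(X_t)\ge e^{-V(t)}I(X_0)$.

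\emph{Step 2: striated regularity of the vorticity.} Applying $\partial_{X_t}$ to the $\omega_\mu$-equation of \eqref{Eq-5} and using $[X_t,\partial_t+v_\mu\cdot\nabla]=0$ gives
\[
D_t^\mu\bigl(\partial_{X_t}\omega_\mu\bigr)=-\mu[\Delta,\partial_{X_t}]\omega_\mu+\partial_{X_t}\partial_1\theta_\mu .
\]
For the temperature source we split $\partial_{X_t}\partial_1\theta_\mu=\partial_1\bigl(\partial_{X_t}\theta_\mu\bigr)+[\partial_{X_t},\partial_1]\theta_\mu$: the commutator $[\partial_{X_t},\partial_1]\theta_\mu=-(\partial_1X_t)\cdot\nabla\theta_\mu$ is bounded in $C^{\EE-1}$ by Corollary~\ref{Coro:1} against $\|\nabla\theta_\mu\|_{L^\infty}\bigl(\|X_t\|_{C^\EE}+\|\Div X_t\|_{C^\EE}\bigr)$, which is time-integrable by Corollary~\ref{cor}(i), while $\|\partial_1(\partial_{X_t}\theta_\mu)\|_{C^{\EE-1}}\lesssim\|X_t\|_{C^\EE}\|\nabla\theta_\mu\|_{L^\infty}+\|X_t\|_{L^\infty}\|\theta_\mu\|_{C^{1+\EE}}$, with $\theta_\mu\in L^1_tC^{1+\EE}$ inherited from $\theta_\mu\in L^\eta_tW^{2,p}$, $p>2$. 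Proposition~\ref{prop2.9} with $s=\EE-1\in{]-1,0[}$ then controls $\|\partial_{X_t}\omega_\mu\|_{L^\infty_tC^{\EE-1}}$ through $V(t)$, the quantities of Step~1 and the viscous commutator $\mu[\Delta,\partial_{X_t}]\omega_\mu=\mu\sum_j\bigl(2\nabla X_t^j\cdot\nabla\partial_j\omega_\mu+(\Delta X_t^j)\partial_j\omega_\mu\bigr)$. This last term involves two derivatives of $\omega_\mu$, which is only bounded a priori; it is the crux of the proof and is handled \emph{uniformly in $\mu\in{]0,1[}$} by adapting the viscous vortex-patch technique of \cite{Danchin0,Hmidi-1} for $2$D Navier--Stokes, trading the derivative loss against the smoothing of the heat semigroup via Propositions~\ref{op A} and \ref{A 1} and the heat-kernel bounds of Section~2; it produces a contribution $C_0e^{CV(t)}(1+t)^{N}$ plus a small multiple of $\sup_{[0,t]}\|\partial_{X_\tau}\omega_\mu\|_{C^{\EE-1}}$ which is absorbed. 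Combining this with Step~1 gives a closed Gronwall system for the pair $\bigl(\|X_t\|_{C^\EE}+\|\Div X_t\|_{C^\EE},\ \|\partial_{X_t}\omega_\mu\|_{C^{\EE-1}}\bigr)$, and with $I(X_t)\ge e^{-V(t)}I(X_0)$ it follows that
\[
\|\omega_\mu\|_{L^\infty_tC^\EE(X_t)}+\sup_\lambda\Bigl(\|X_{t,\lambda}\|_{L^\infty_tC^\EE}+\|\Div X_{t,\lambda}\|_{L^\infty_tC^\EE}\Bigr)\le C_0e^{CV(t)}(1+t)^{N}.
\]

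\emph{Step 3: closing the bootstrap, and the remaining assertions.} Inserting the last bound, together with $\|\omega_\mu(t)\|_{L^2}\le C_0(1+t)$ and $\|\omega_\mu(t)\|_{L^\infty}\le C_0(1+t)^7$ (Corollary~\ref{cor}(ii)), into Theorem~\ref{The estimate log} gives $\|\nabla v_\mu(t)\|_{L^\infty}\le C_0(1+t)^7\bigl(1+V(t)\bigr)$, i.e. $V'(t)\le C_0(1+t)^7(1+V(t))$; Gronwall yields $1+V(t)\le C_0\exp\bigl(C_0(1+t)^8\bigr)$, hence $\|\nabla v_\mu\|_{L^\infty_tL^\infty}\le C_0e^{C_0t^8}$ and, feeding back into Step~2, $\|\omega_\mu\|_{L^\infty_tC^\EE(X_t)}+\sup_\lambda\bigl(\|X_{t,\lambda}\|_{L^\infty_tC^\EE}+\|\Div X_{t,\lambda}\|_{L^\infty_tC^\EE}\bigr)\le C_0e^{\exp C_0t^8}$. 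Existence then follows by regularizing the data, solving the smooth problem --- which is global because all the preceding estimates are uniform in the regularization parameter --- and passing to the limit by compactness, the regularity classes being read off from Propositions~\ref{prop v} and \ref{prop theta}; uniqueness, under $\tfrac1p+\tfrac2r\le1$ (which secures $v_\mu\in L^\infty_t\Lip$ and $\nabla\theta_\mu\in L^1_tL^\infty$), comes from an $L^2$ energy estimate on the difference of two solutions plus Gronwall. Finally $\Psi_\mu$ is transported by $v_\mu$ and satisfies $X_t\circ\Psi_\mu=\partial_{X_0}\Psi_\mu$, so a transport estimate transfers the striated regularity of $X_t$ to $\Psi_\mu$, giving $\|\Psi_\mu\|_{L^\infty_tC^\EE(X_t)}\le C_0e^{\exp C_0t^8}$; and in the vortex-patch case, taking $f_0\in C^{1+\EE}$ with $\partial\Omega^0=f_0^{-1}\{0\}$ and $\nabla f_0\ne0$ near $\partial\Omega^0$, the transported level-set function $f_t=f_0\circ\Psi_\mu^{-1}(t,\cdot)$ has $\nabla^\perp f_t$ among the fields of the family $(X_t)$, hence $\nabla^\perp f_t\in C^\EE$, so $f_t\in C^{1+\EE}$ and $\partial\Omega(t)=\Psi_\mu(t,\partial\Omega^0)=f_t^{-1}\{0\}$ is a $C^{1+\EE}$ Jordan curve on every $[0,T]$. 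The one place where the viscosity genuinely alters the inviscid analysis, and therefore the main obstacle, is the $\mu$-uniform control of $\mu[\Delta,\partial_{X_t}]\omega_\mu$ in $C^{\EE-1}$ in Step~2; its $\mu$-independence is also what later powers the inviscid limit of Theorem~\ref{Theo1.2}.
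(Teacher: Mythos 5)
Your proposal is correct and follows essentially the same route as the paper: propagate the striated regularity $\partial_{X_t}\omega_\mu\in C^{\EE-1}$ along the push-forward family, treat the viscous commutator $\mu[\Delta,\partial_{X_t}]\omega_\mu$ \`a la Danchin--Hmidi by trading the derivative loss against the smoothing of the transport-diffusion propagator (the paper makes this precise via Bony's decomposition into a paraproduct part estimated in $C^{\EE-3}$ and a remainder absorbed through the maximal-regularity bound on $\mu\|\omega_\mu\|_{\widetilde L^1_tB^2_{\infty,\infty}}$), and close with the stationary logarithmic estimate and Gronwall. The only cosmetic deviation is your splitting of the source $\partial_{X_t}\partial_1\theta_\mu$ into $\partial_1\partial_{X_t}\theta_\mu+[\partial_{X_t},\partial_1]\theta_\mu$, whereas the paper bounds $X_t\cdot\nabla\partial_1\theta_\mu$ directly in $C^{\EE-1}$ through $L^p\hookrightarrow C^{\EE-1}$ and $\nabla^2\theta_\mu\in L^1_tL^p$; both rest on the same input from Proposition \ref{prop theta}.
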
 
\begin{proof}
The existence part of the theorem is classical and can be done for example by using a standard recursive method, see, e.g. \cite{Paicu-Zhu}. We will focus on explicit that the velocity is a Lipschtizian function through the striated regularity of its vorticity. For this aim, taking the directional derivative $\partial_{X_t,\lambda}$ to $\omega_{\mu}-$equation in the system \eqref{Eq-5}, it follows in accordance with Proposition \ref{com} that
\begin{equation*}\label{Eq8}
(\partial_t+v\cdot\nabla-\mu\Delta)\partial_{X_t,\lambda}\omega_\mu={X_{t,\lambda}} \cdot \nabla \partial_1\theta_\mu-\mu[\Delta,{X_{t,\lambda}}]\omega_\mu.
\end{equation*}
The key thus is to estimate the commutator $\mu[\Delta,{X_{t,\lambda}}]\omega_\mu$. Via Bony's decomposition, we write
\begin{equation*}
\mu[\Delta,X_{t,\lambda}]\omega_\mu=\mathfrak{A}+\mu\mathfrak{B},
\end{equation*}
with
\begin{equation*}
\mathfrak{A}\triangleq2\mu T_{\nabla X^{i}_{t,\lambda}}\partial_i\nabla\omega_\mu+2\mu T_{\partial_i\nabla\omega_\mu} \nabla  X^{i}_{t,\lambda}+ \mu T_{\Delta  X^{i}_{t,\lambda}} \partial_i\omega_\mu+\mu T_{\partial_i\omega_\mu}\Delta  X{^{i}_{t,\lambda}}.
\end{equation*}
and
\begin{equation*}
\mathfrak{B} \triangleq 2\mathcal{R}(\nabla X^i_{t,\lambda},\partial_i\nabla \omega_\mu)+\mathcal{R}(\Delta X^i_{t,\lambda},\partial_i\omega_\mu).
\end{equation*}
The famous Theorem 3.38 page 162 in \cite{Bahouri-Chemin-Danchin} confirms us
\begin{equation}\label{X omega}
\Vert\partial_{X_{\lambda}} \omega_\mu \Vert_{L_t^{\infty}C^{\epsilon-1}}\leq C e^{CV_\mu(t)}\Big(\Vert\partial_{X_0,\lambda}\omega^0_\mu\Vert_{C^{\epsilon-1}}+\Vert\partial_{X_{\lambda}}\partial_1\theta_\mu \Vert_{L^1_tC^{\epsilon-1}}+(1+\mu t)\Vert \mathfrak{A} \Vert_{{L_t^\infty}C^{\epsilon-3}}+ \mu \Vert \mathfrak{B} \Vert_{\widetilde{L}_t^1C^{\epsilon-1}}\Big )\\.
\end{equation}
To simplify our presentation we set
\begin{eqnarray}
\mathrm{III_1} &\triangleq & \Vert\partial_{X_0,\lambda}\omega^0_\mu\Vert_{C^{\epsilon-1}}+\Vert\partial_{X_{\lambda}}\partial_1\theta_\mu \Vert_{L^1_tC^{\epsilon-1}} \nonumber \\ \mathrm{III_2} & \triangleq &  \Vert \mathfrak{A} \Vert_{{L_t^\infty}C^{\epsilon-3}} \nonumber \\ \mathrm{III_3} & \triangleq & \mu \Vert \mathfrak{B} \Vert_{\tilde{L}_t^1C^{\epsilon-1}} \nonumber .
\end{eqnarray}
We embark by estimating III$_1$. The fact that $L^p \hookrightarrow  C^{\epsilon-1} $ for $p>\frac{2}{1-\EE}$ and H\"older's inequality leading to 
\begin{eqnarray*}
\mathrm{III_1} &\leq & C \big( \Vert\partial_{X_0,\lambda}\omega^0\Vert_{L^p}+\Vert{X_{\lambda}} \cdot \nabla  \partial_1\theta\Vert_{L^1_tL^{p}}\big)\\ &\leq & C \big( \Vert\partial_{X_0,\lambda}\omega^0_\mu \Vert_{L^p}+\Vert{X_{\lambda}} \Vert_{L^\infty_t L^{\infty}} \Vert \nabla^2 \theta_\mu \Vert_{L^1_t L^{p}}\big).
\end{eqnarray*}
In particular, \eqref{grad 2 theta} for $ \eta =1$ and the embedding $ C^\epsilon \hookrightarrow L^\infty$ ensure that 
\begin{eqnarray}\label{I estimate of grad v}
\mathrm{III_1} &\leq & C_0  (1+t )^{7} \Vert X_{t,\lambda }\Vert_{L^{\infty}C^{\epsilon}}.
\end{eqnarray}
For the term $\mathrm{III_2}$, we inspire the idea from \cite{Bahouri-Chemin-Danchin,Hmidi-1} to state
\begin{equation*} 
\mathrm{III_2} \leq C \Vert\omega _\mu \Vert _{L^\infty_t L^\infty}  \Vert X_{t,\lambda }\Vert_{L^{\infty}C^{\epsilon}},
\end{equation*}
which provides in view of {\bf(ii)} in Corollary \ref{cor} to
\begin{equation}\label{II estimate of grad v}
\mathrm{III_2} \leq C_0(1+t )^{7}    \Vert X_{t,\lambda }\Vert_{C^{\epsilon}}.
\end{equation}
Let us move to estimate $\mathrm{III_3}$, by employing again \cite{Bahouri-Chemin-Danchin,Hmidi-1}, one obtains
\begin{equation}\label{III 1}
\mathrm{III_3} \leq C \mu \Vert\omega_\mu\Vert _{\widetilde{L}^1_tB^2_{\infty,\infty}}  \Vert X_{t,\lambda }\Vert_{C^{\epsilon}}.
\end{equation}
Concerning the term $\Vert\omega_\mu\Vert _{\widetilde{L}^1_tB^2_{\infty,\infty}}$, we make use the maximal regularity stated in Proposition \ref{prop2.10} for $a=\omega_\mu, g=\partial_1\theta_\mu, r=1, s=0$, and $p_1=p_2=\infty$ to ensure
\begin{equation*} 
\mu \Vert\omega_\mu\Vert _{\widetilde{L}^1_tB^2_{\infty,\infty}}   \leq Ce^{CV_\mu(t)}(1+\mu t) \Big( \Vert\omega^0_\mu\Vert _{B^0_{\infty,\infty}}+ \int_{0}^{t} \Vert \partial_1\theta_\mu(\tau) \Vert _{B^0_{\infty,\infty}} d\tau \Big ).
\end{equation*}
Or, the embedding $L^\infty \hookrightarrow B^0_{{\infty,\infty}}$ implies
\begin{equation*} 
\mu \Vert\omega_\mu\Vert _{\widetilde{L}^1_tB^2_{\infty,\infty}}  \leq Ce^{CV_\mu(t)}(1+\mu t) \big(\Vert\omega^0 _\mu \Vert _{L^{\infty}}+  \Vert \nabla \theta_\mu \Vert _{L^1_t L^{\infty}}\big).
\end{equation*}
By means of {\bf(i)} in Corollary \ref{cor} we get
\begin{equation*} 
 \mu \Vert\omega_\mu\Vert _{\widetilde{L}^1_tB^2_{\infty,\infty}}   \leq  C_0e^{CV_\mu(t)}(1+\mu t)(1+t )^{7},
\end{equation*}
combined with \eqref{III 1}, we end up with
\begin{equation} \label{III estimate of grad v}
  \mathrm{III_3}  \leq  C_0e^{CV_\mu(t)}(1+\mu t)(1+t )^{7}.
\end{equation}
Adding \eqref{I estimate of grad v},\eqref{II estimate of grad v},\eqref{III estimate of grad v} and put them in \eqref{X omega}, bearing in mind that $\mu \in ]0,1[ $ we infer that
\begin{equation}\label{Est11} 
\Vert\partial_{X_{t,\lambda}} \omega_\mu\Vert_{L_t^{\infty}C^{\epsilon-1}} \leq C_0e^{CV_\mu(t)}(1+t )^{8} \widetilde{\Vert} X_{\lambda,t} \Vert_{L^{\infty}C^{\epsilon}}.
\end{equation}
Now, we bound the term $\widetilde{\Vert }X_{\lambda,t} \Vert_{C^{\epsilon}}$. Thanks to Proposition \ref{prop2.9} for $a= X_{t,\lambda }, s=\epsilon$ and $p=r= \infty$, we readily get
\begin{equation}\label{X C epsilon -1}
\Vert X_{t,\lambda }\Vert_{C^{\epsilon}}\leq Ce^{CV_\mu(t)}\Big(\Vert X_{0,\lambda}\Vert_{C^{\epsilon}}+ \int_0^t e^{- CV_{\mu}(\tau)} \Vert  \partial_{X_{\tau,\lambda}} v_\mu(\tau)\Vert_{C^{\epsilon}} d\tau\Big).             
\end{equation}
We make use the following result which its proof can be found in \cite{Bahouri-Chemin-Danchin,Chemin2}
\begin{equation*} 
\Vert  \partial_{X_{t,\lambda}} v_\mu(t)\Vert_{C^{\epsilon}} \leq  C\big( \Vert \nabla v_\mu(t) \Vert_{L^\infty} \widetilde{\Vert}  X_{t,\lambda }\Vert_{C^{\epsilon}}+\Vert \omega_\mu(t) \Vert_{C^{\epsilon-1}}\big).
\end{equation*}
Thus we get in view the last estimate and \eqref{Est11},
\begin{equation}\label{X v}
\Vert  \partial_{X_{t,\lambda}} v_\mu(t)\Vert_{C^{\epsilon}} \leq  C \widetilde{\Vert}  X_{t,\lambda }\Vert_{C^{\epsilon}}\Big(\Vert \nabla v_\mu(t) \Vert_{L^\infty} + C_0e^{CV_\mu(t)}(1+t )^{8}\Big).
\end{equation}
Substituting \eqref{X v} in\eqref{X C epsilon -1} with an obvious change of constant we have

\begin{equation*}
{\Vert} X_{t,\lambda }\Vert_{C^{\epsilon}}\leq {Ce^{CV_\mu(t)}\bigg(\Vert X_{0,\lambda}\Vert_{C^{\epsilon}} + C_0\int_0^t e^{-{CV_{\mu}(\tau)}} \widetilde{\Vert}}  X_{\tau,\lambda }\Vert_{C^\epsilon}\Big( \Vert \nabla v_\mu(\tau) \Vert_{L^\infty} +(1+\tau )^{8}\Big)d\tau \bigg).  
\end{equation*}
To close our claim we treat the term ${\Vert} \Div X_{t,\lambda }\Vert_{C^{\epsilon}}$ by applying the divergence operator to \eqref{tr} and using the fact $\Div v_{\mu} =0$ we eventually get $(\partial_t+v\cdot\nabla) \Div {X_{t,\lambda}}=0$, so in view of Proposition \ref{prop2.9}, it happens  
\begin{equation}\label{Div-X}
\|\Div {X_{t,\lambda}}\|_{C^\EE}\le Ce^{CV_\mu(t)}\|\Div X_{0,\lambda}\|_{C^\EE}.
\end{equation}
Combining the last two estimates to conclude that
\begin{equation}\label{3.52}
e^{-CV_\mu(t)}\widetilde{\Vert} X_{t,\lambda }\Vert_{C^{\epsilon}}\le C\bigg(\widetilde{\Vert} X_{0,\lambda}\Vert_{C^{\epsilon}} + C_0\int_0^t e^{-{CV_{\mu}(\tau)}} \tilde{\Vert}  X_{\tau,\lambda }\Vert_{C^\epsilon}\Big( \Vert \nabla v_\mu(\tau) \Vert_{L^\infty}+(1+\tau)^{8}\Big)d\tau\bigg).
\end{equation}   
At this stage Gronwall's inequality tells us 
\begin{equation}\label{Est-X}
{\widetilde{\Vert}} X_{t,\lambda }\Vert_{C^{\epsilon}} \leq C_0 e^{ C_0  V_\mu(t)}e^{ C_0 t^{9}}.
\end{equation}
Gathering \eqref{Est11} and \eqref{Est-X}, one obtains
\begin{equation*}\label{3.26.}
\Vert\partial_{X_{t,\lambda}} \omega_\mu(t)\Vert_{C^{\epsilon-1}} \leq C_0e^{ C_0  V_\mu(t)}e^{ C_0 t^{9}}.
\end{equation*}
Moreover, from the last two estimates and {\bf(ii)} in Corollary \ref{cor} we thus get   
\begin{equation}\label{3.27}
\Vert\partial_{X_{t,\lambda}} \omega_\mu(t)\Vert_{C^{\epsilon-1}}+\Vert \omega_\mu(t)\Vert_{L^\infty} {\widetilde{\Vert}} X_{t,\lambda }\Vert_{C^{\epsilon}} \leq C_0 e^{ C_0  V_\mu(t)} e^{ C_0 t^{9}}. 
\end{equation}
Finally, we must estimate $\omega_\mu$ in anisoropic H\"older space $ C^{\epsilon}(X_t) $. For this goal, we recall that 
\begin{equation}\label{3.56}
\Vert \omega \Vert_{C^{\epsilon}(X)}\triangleq \frac{1}{I(X_t)}\bigg( \Vert \omega\Vert_{L^{\infty}}\sup_{\lambda \in \Lambda}\widetilde{\Vert}X_\lambda\Vert_{C^{\epsilon}}+ \sup_{\lambda \in \Lambda }\Vert \partial_{X_{\lambda}} \omega \Vert_{C^{\epsilon-1}}\bigg).
\end{equation}
To control the term  $I(X_t)$ we apply the derivative in time to the quantitity $\partial_{X_{0,\lambda}}\Psi$, it follows  
\begin{equation*}
\left\{ \begin{array}{ll}
\partial_t \partial_{X_0,\lambda} \Psi(t,x) = \nabla v(t,\Psi(t,x)) \partial_{X_0,\lambda} \psi(t,x) &\\
\partial_{X_{0,\lambda}} \Psi(0,x) ={X_{0,\lambda}}.
\end{array} 
\right.
\end{equation*}
The time reversibilty of the previous equation and Gronwall's inequality ensure that 
\begin{equation*}
| X_{0,\lambda}(x)| \leq |\partial_{X_{0,\lambda}}\Psi(t,x)|e^{V_{\mu}(t)}.
\end{equation*}   
From {\bf(ii)} in  Definition \ref{3.1} we deduce  that 
\begin{equation}\label{I-X-t}
 I(X_t)\geq I(X_0)e^{-V_{\mu}(t)} > 0.
\end{equation}  
In accordance with \eqref{3.27}, \eqref{3.56} and \eqref{I-X-t}, we end up with 
\begin{equation}\label{Est15}
\Vert \omega_\mu(t)\Vert_{C^{\epsilon}(X_t)} \leq C_0 e^{ C_0 t^{9}}e^{ C_0  V_\mu(t)} .
\end{equation}
In keeping with logarithmic estimate stated in Theorem \ref{The estimate log}, one obtains in virtue of {\bf{(iii)-}} in Proposition \ref{prop v} and {\bf(ii)} in Corollary \ref{cor} the following
$$ \Vert \nabla v_\mu(t)\Vert_{L^{\infty}}\leq C\left( C_0(1+t) + C_0(1+t)^{7}  \log \left( e+ \frac{\Vert \omega_\mu(t) \Vert_{C^{\epsilon}(X)}}{\Vert \omega_\mu(t) \Vert_{L^{\infty}}} \right) \right).  $$
The property of increasing function $ x\longmapsto \log (e+\frac{a}{x})$ yields
$$ \Vert \nabla v_\mu(t)\Vert_{L^{\infty}}\leq  C_0 \left( (1+t)^{7} \log \left( e+ \frac{\Vert \omega_\mu(t) \Vert_{C^{\epsilon}(X)}}{\Vert \omega^0_\mu \Vert_{L^{\infty}}} \right) \right).  $$ 
It follows from \eqref{Est15} that
$$ \Vert \nabla v_\mu(t)\Vert_{L^{\infty}}\leq C_0(1+t)^{7}  \Big( (1+t)^{9}+\int_0^t  \Vert \nabla v_\mu(\tau)\Vert_{L^{\infty}}d\tau\Big). $$
Again, Gronwall's inequality gives 
\begin{equation}\label{Lip v}
\Vert \nabla v_\mu(t)\Vert_{L^{\infty}}\leq C_0e^{C_0t^{8} }.
\end{equation}
together with \eqref{Est15}, we may write   
\begin{equation}\label{Est16}
\Vert \omega_\mu(t)\Vert_{C^{\epsilon}(X_t)} \leq C_0 e^{ \exp C_0t^{8}} .
\end{equation} 
Now, it remains to bound $\Psi_\mu$ in $C^\epsilon(X_t)$. First we recall that $\partial_{X_{0,\lambda}} \Psi_\mu(t) = {X_{t,\lambda}}\circ\Psi_\mu(t) $, so we exploit in general case the definition $\|f\|_{C^\EE}=\|f\|_{L^\infty}+\sup_{x\ne y}\frac{|f(x)-f(y)|}{|x-y|^{\EE}}$, one has
\begin{equation*}
 \Vert  {X_{t,\lambda}}\circ\Psi_\mu(t) \Vert_{C^\epsilon} \leq  \Vert  {X_{t,\lambda}}\Vert _{C^\epsilon}{\Vert \nabla \Psi_\mu(t) \Vert}^{\EE}_{L^{\infty}} \leq  \Vert  {X_{t,\lambda}}\Vert _{C^\epsilon} e^{CV_{\mu}(t)},
\end{equation*}
where, we have used ${\Vert \nabla \Psi_\mu(t) \Vert}_{L^{\infty}} \leq e^{CV_{\mu}(t)}$. Consequently,  
\begin{equation}\label{3.31}
\Vert  \Psi_\mu(t) \Vert_{{C^\epsilon}(X_{t})} \leq C_0 e^{ \exp C_0t^{8}}.
\end{equation}
The proof of Theorem \ref{theo-GP} is finished. 
\end{proof}
\subsection*{Proof of Theorem \ref{Th-1}} The proof of Theorem \ref{Th-1} comes from Theorem \ref{theo-GP}, however it remains only to establish the persistence regularity of the boundary of the transported patch $\Omega_t$. For this reason, we will erect an initial admissible family $ X_0=(X_{0,\lambda})_{\lambda\in \{0,1 \} }$ for which $ \omega^0_\mu=1_{\Omega_0}\in C^{\EE}(X_{0})$. Since $\partial\Omega_0$ is a Jordan curve with $C^{1+\EE}-$regularity, so in light of Definition \ref{Char} there exists a real function $ f_0 \in C^{1+\epsilon}$ and neighborhood $W_0$ fill the following property 
\begin{equation*}
\partial \Omega _{0}=W_0 \cap f^{-1}(\{0\}),\quad\nabla f_0\ne 0\;\mbox{on}\; W_0.
\end{equation*}
Let $ \chi \in \mathcal{D}(\mathbb{R}^{2}) $ be such that $ 0\leq \chi \leq1, \supp \chi \subset W_0 $ and $\chi\equiv1$ on a small neighborhood of $W_1 \Subset W_0  $, then define the two vectors
\begin{displaymath} 
 X_{0,0} =\nabla^{\bot}f_0 \quad \textnormal{ and} \quad  X_{0,1} =(1-\chi) \binom {1}{0}. 
\end{displaymath}
First, we check easily that $ X_0=(X_{0,\lambda})_{\lambda \in \{0,1 \} } $ is non-degenerate in accoordance with {\bf(i)} in Definition \ref{3.1}, moreover, for $\lambda\in\{0,1\}$ we claim that  $X_{0,\lambda},\Div X_{0,\lambda}\in C^{\epsilon}$, so due to the Definition \ref{def:3.2.1} we deduce that $ X_0=(X_{0,\lambda})_{\lambda \in \{0,1 \} } $ is an admissible family. Second, $ X_{0}=(X_{0,\lambda})_{\lambda \in \{0,1 \} } $ is tangential family with respect to $\Sigma=\partial\Omega_0$. Indeed, we remark that

\begin{equation*}
\left\{\begin{array}{ll}
 X_{0,0}\cdot\nabla f_{{0}{|\partial\Omega_0}}= \nabla^{\bot} f_0\cdot f_{{0}_{|\partial\Omega_0}}=0 ,\\ X_{0,1}\cdot \nabla f_{{0}{|\partial\Omega_0}}=(1-\chi)\partial_x f_0=0.
\end{array}
\right.
\end{equation*}
In last line we have used the fact that $\chi\equiv 1$ on $W_1$. We thus deduce that $X_0\in\mathcal{T}^{\EE}_{\Sigma}$. 

\hspace{0.5cm}The Definition \ref{def3.2} ensures that the pushforward of the family vector field $X_0$ is defined for $x\in\RR^2$ and $t\ge0$ by $ X_{t,\lambda}(x)= (\partial_{X_{0,\lambda}}\Psi_\mu)(t,\Psi_\mu^{-1})(t,x) $. In light of \eqref{Div-X}, \eqref{Est-X}  and \eqref{I-X-t}, the family $X_t$ still remains non-degenerate and satisfies for $\lambda\in\{0,1\}$ the regurality $X_{t,\lambda},\Div X_{t,\lambda} \in C^\epsilon$ which helps us  to say that $ X_t=(X_{t,\lambda})_{\lambda \in \{0,1 \} }$ is also an admissible family.

Next, for $x_0\in\partial\Omega_0$ we parametrize the curve $\partial\Omega_0$ as a solution of the following ordinary equation

\begin{equation*}
\left\{
\begin{array}{ll}
\partial_\zeta\gamma^{0}(\zeta) =X_{0,0}\gamma^{0}(\zeta), &\\
\gamma^{0}(0) =x_0.
\end{array}
\right.
\end{equation*}
A straightforward computation yields $\gamma^{0} \in C^{1+\epsilon}(\RR,\RR^2)$. On the other hand, the legitimite way to define the evolution parametrization of $ \partial \Omega_t$ is the transport process, that is for all $t\geq0$ we set $$\gamma(t,\zeta)\triangleq \Psi(t,\gamma^{0}(\zeta)).$$ 

The criterion differentiation with respect to $\zeta$ leads $\partial_{\zeta}\gamma(t,\zeta)=( \partial_{X_{0,0}} \Psi_\mu)(t,\gamma^{0}(\zeta))$. But $\partial_{X_{0,0}} \Psi_\mu \triangleq {X_{0,0}}\circ\Psi_\mu $, so, in view of \eqref{3.31} one finds that $\partial_{X_{0,0}} \Psi _\mu\in L^{\infty}_{\loc}((\mathbb{R}_+,C^{\epsilon}) $. Finally, we infer that $\gamma(t) \in  L^{\infty}_{\loc}((\mathbb{R_+},C^{1+\epsilon})  $ this confirms the regularity persistence of the curve $ \partial \Omega_t$ through the time, so the proof of Theorem \ref{Th-1} is accomplished.

\section{Inviscid limit}
This section concerns the inviscid limit in the context of a smooth patch of the system \eqref{Eq-3} towards \eqref{Eq-4} whenever the viscosity parameter $\mu$ goes to $0$ and quantify the rate of convergence between velocities, densities, and the associated flows. For this reason, we embark on the following technical results. The first one deals with the regularity of the vortex patch in certain homogeneous Besov space where its proof can be found in detail in \cite{Meddour-Zerguine}. The second one cares about the complex interpolation between the Laplacian of the velocity and its vorticity by means of Biot-Savart law. Especially, we have.      
\begin{prop}Let $0<\epsilon<1$ and  $\Omega_0$ be a $C^{\epsilon+1}$ -bounded domain, then the function $\mathbf{1}_{\Omega_0}$ belongs to ${\dot{B}_{{p,\infty}}^{\frac{1}{p}}}.$
\end{prop}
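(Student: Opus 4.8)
The plan is to verify the defining estimate $\sup_{q\in\ZZ}2^{q/p}\|\dot\Delta_q\mathbf{1}_{\Omega_0}\|_{L^p}<\infty$ by hand, treating low and high frequencies separately; a detailed version of this computation is carried out in \cite{Meddour-Zerguine}. Write $h\triangleq\mathscr{F}^{-1}\varphi$, a Schwartz function with $\int_{\RR^2}h=0$ (because $\varphi$ vanishes near the origin), so that $\dot\Delta_q\mathbf{1}_{\Omega_0}(x)=2^{2q}\int_{\RR^2}h\big(2^q(x-y)\big)\big(\mathbf{1}_{\Omega_0}(y)-\mathbf{1}_{\Omega_0}(x)\big)\,dy$. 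For the low frequencies $q\le0$, since $\mathbf{1}_{\Omega_0}\in L^1$, Young's inequality gives $\|\dot\Delta_q\mathbf{1}_{\Omega_0}\|_{L^p}\lesssim 2^{2q(1-1/p)}|\Omega_0|$, whence $2^{q/p}\|\dot\Delta_q\mathbf{1}_{\Omega_0}\|_{L^p}\lesssim 2^{q(2-1/p)}|\Omega_0|\le|\Omega_0|$ since $p>1$; so only $q>0$ is genuinely at stake.

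For $q>0$ set $d(x)\triangleq\mathrm{dist}(x,\partial\Omega_0)$. In the integral the factor $\mathbf{1}_{\Omega_0}(y)-\mathbf{1}_{\Omega_0}(x)$ vanishes unless the segment $[x,y]$ crosses $\partial\Omega_0$, i.e. unless $|x-y|\ge d(x)$; combining this with the rapid decay $|h(z)|\lesssim_M(1+|z|)^{-M}$ yields the pointwise bound $|\dot\Delta_q\mathbf{1}_{\Omega_0}(x)|\lesssim_M(1+2^qd(x))^{2-M}$ for every $M>2$. Then $\|\dot\Delta_q\mathbf{1}_{\Omega_0}\|_{L^p}^p\lesssim_M\int_{\RR^2}(1+2^qd(x))^{(2-M)p}\,dx$, and slicing $\RR^2$ along the dyadic level sets $\{2^{-q}2^k\le d(x)<2^{-q}2^{k+1}\}$, $k\ge0$, together with the tube $\{d(x)<2^{-q}\}$, reduces the whole matter to the measure bound $|\{x:d(x)<\delta\}|\lesssim\delta$ for $0<\delta\le\delta_0$ (with $|\{x:d(x)<\delta\}|\lesssim\delta^2$ for the finitely many far annuli where $\delta$ exceeds $\mathrm{diam}\,\Omega_0$). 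Summing the resulting geometric series in $k$, with $M$ chosen so that $(M-2)p>2$, gives $\|\dot\Delta_q\mathbf{1}_{\Omega_0}\|_{L^p}^p\lesssim 2^{-q}$, i.e. $2^{q/p}\|\dot\Delta_q\mathbf{1}_{\Omega_0}\|_{L^p}\lesssim1$ uniformly in $q>0$. Together with the low-frequency bound this proves $\mathbf{1}_{\Omega_0}\in\dot B^{1/p}_{p,\infty}$.

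The key — and essentially the only non-routine — step is the tubular-neighbourhood estimate $|\{x:\mathrm{dist}(x,\partial\Omega_0)<\delta\}|\le C\,\mathcal{H}^1(\partial\Omega_0)\,\delta$ for small $\delta$, which is where the $C^{1+\epsilon}$ regularity of $\partial\Omega_0$ enters: since $\partial\Omega_0$ is a compact $C^{1+\epsilon}$ Jordan curve it has finite length and admits a finite atlas of $C^{1+\epsilon}$ graph charts, in each of which the normal coordinate gives the linear-in-$\delta$ control (finite upper Minkowski content). I would establish this first. An equivalent and slightly slicker route avoids Littlewood--Paley altogether: using the finite-difference characterization of $\dot B^{s}_{p,\infty}$ for $0<s=1/p<1$ (see \cite{Bahouri-Chemin-Danchin}), one has $\|\mathbf{1}_{\Omega_0}(\cdot+h)-\mathbf{1}_{\Omega_0}\|_{L^p}^p=|\Omega_0\,\triangle\,(\Omega_0-h)|$, the segment-crossing argument gives $\Omega_0\,\triangle\,(\Omega_0-h)\subset\{x:d(x)\le|h|\}$, and the same measure bound yields $|h|^{-1/p}\|\mathbf{1}_{\Omega_0}(\cdot+h)-\mathbf{1}_{\Omega_0}\|_{L^p}\lesssim1$ for $|h|\le\delta_0$, while for $|h|>\delta_0$ one simply bounds the symmetric difference by $2|\Omega_0|$.
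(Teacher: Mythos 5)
Your argument is correct. Note that the paper itself gives no proof of this proposition: it simply defers to \cite{Meddour-Zerguine}, where the statement is established by exactly the ``slicker route'' you mention at the end, namely the finite-difference characterization of $\dot B^{s}_{p,\infty}$ for $0<s=1/p<1$ together with the identity $\|\mathbf{1}_{\Omega_0}(\cdot+h)-\mathbf{1}_{\Omega_0}\|_{L^p}^p=|\Omega_0\,\triangle\,(\Omega_0-h)|$ and the tubular-neighbourhood bound $|\{x:\mathrm{dist}(x,\partial\Omega_0)<\delta\}|\lesssim\delta$. So your second route reproduces the standard proof, while your primary route (estimating $\|\dot\Delta_q\mathbf{1}_{\Omega_0}\|_{L^p}$ directly from the kernel representation with $\int h=0$, the segment-crossing observation $|x-y|\ge d(x)$, and dyadic slicing along level sets of $d$) is a more laborious but equally valid alternative; both ultimately rest on the same single geometric input, the finite upper Minkowski content of the compact $C^{1+\varepsilon}$ Jordan curve $\partial\Omega_0$, and neither uses more than Lipschitz (indeed, rectifiable) regularity of the boundary, so the hypothesis $C^{1+\epsilon}$ is more than enough. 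Two cosmetic points: in the far regime of your slicing there are infinitely many (not ``finitely many'') dyadic annuli with $2^{-q}2^{k}\gtrsim\mathrm{diam}\,\Omega_0$, but the bound $|\{d<\delta\}|\lesssim\delta^2$ there still makes the geometric series converge once $(M-2)p>2$, so nothing is lost; and for the finitely many $q>0$ with $2^{-q}>\delta_0$ one should, as you implicitly do, fall back on the trivial bound $\|\dot\Delta_q\mathbf{1}_{\Omega_0}\|_{L^p}\lesssim|\Omega_0|^{1/p}$.
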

\begin{prop}\label{Delta v}
Let$ (p,r,\beta) \in [1,+\infty]\times ]-1,1[ $ and $ v_\mu$ be free-divergence vector field covered by the Biot-Savart law $v_\mu =\Delta ^{-1} \nabla ^\bot \omega_\mu$ then the following estimate holds true.
$$ \Vert \Delta v_\mu  \Vert_{L^r_tL^p} \leq C \Vert  \omega_\mu  \Vert_{\widetilde{L} ^r_t {B_{{p,\infty}}^{\beta}}}^\frac{1+\beta}{2}  \Vert  \omega_\mu  \Vert_{\widetilde{L} ^r_t {B_{{p,\infty}}^{2+\beta}}}^\frac{1-\beta}{2}.$$
\end{prop}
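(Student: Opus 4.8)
The plan is to push everything onto the vorticity and then interpolate in frequency. Because $v_\mu=\Delta^{-1}\nabla^\perp\omega_\mu$, applying $\Delta$ gives $\Delta v_\mu=\nabla^\perp\omega_\mu$, so $|\Delta v_\mu|=|\nabla\omega_\mu|$ pointwise and $\|\Delta v_\mu\|_{L^r_tL^p}=\|\nabla\omega_\mu\|_{L^r_tL^p}$. Decomposing $\nabla\omega_\mu=\sum_{q\ge-1}\nabla\Delta_q\omega_\mu$, the triangle inequality in $L^r_tL^p$ together with the Bernstein inequality of Lemma~\ref{Bernstein}(ii) yields
\begin{equation*}
\|\Delta v_\mu\|_{L^r_tL^p}\le\sum_{q\ge-1}\|\nabla\Delta_q\omega_\mu\|_{L^r_tL^p}\lesssim\sum_{q\ge-1}2^{q}\|\Delta_q\omega_\mu\|_{L^r_tL^p}=\|\omega_\mu\|_{\widetilde L^r_tB^{1}_{p,1}},
\end{equation*}
the last equality being just the definition of the Chemin--Lerner norm. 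Thus the whole statement reduces to an interpolation bound for $\|\omega_\mu\|_{\widetilde L^r_tB^{1}_{p,1}}$.

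Next I would invoke the interpolation inequality \eqref{m1} with time-integrability exponent $r$, target regularity $s=1$, target summation index $1$, and endpoints $s_1=\beta$, $s_2=2+\beta$. The requirement $s_1<s<s_2$ is precisely $\beta<1<2+\beta$, i.e. $\beta\in\,]-1,1[$, and the convex weight $\zeta$ solving $1=\zeta\beta+(1-\zeta)(2+\beta)$ is $\zeta=\frac{1+\beta}{2}\in\,(0,1)$, with $1-\zeta=\frac{1-\beta}{2}$. Hence
\begin{equation*}
\|\omega_\mu\|_{\widetilde L^r_tB^{1}_{p,1}}\le C\,\|\omega_\mu\|_{\widetilde L^r_tB^{\beta}_{p,\infty}}^{\frac{1+\beta}{2}}\,\|\omega_\mu\|_{\widetilde L^r_tB^{2+\beta}_{p,\infty}}^{\frac{1-\beta}{2}},
\end{equation*}
and combining with the previous display gives the claim.

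For a self-contained argument not relying on \eqref{m1}, one proves this last bound directly: for each $q\ge-1$ one has the two estimates $2^{q}\|\Delta_q\omega_\mu\|_{L^r_tL^p}\le2^{q(1-\beta)}\|\omega_\mu\|_{\widetilde L^r_tB^{\beta}_{p,\infty}}$ and $2^{q}\|\Delta_q\omega_\mu\|_{L^r_tL^p}\le2^{-q(1+\beta)}\|\omega_\mu\|_{\widetilde L^r_tB^{2+\beta}_{p,\infty}}$; summing the first over $q\le N$ and the second over $q>N$ (both geometric series converge since $1-\beta>0$ and $1+\beta>0$) and choosing the integer $N$ closest to the balancing value $2^{2N}\sim\|\omega_\mu\|_{\widetilde L^r_tB^{2+\beta}_{p,\infty}}/\|\omega_\mu\|_{\widetilde L^r_tB^{\beta}_{p,\infty}}$ makes the two resulting terms coincide and equal the asserted product, up to an absolute constant (the rounding of $N$, and the degenerate cases where the balancing value lies outside the admissible range, being harmless thanks to the embedding $\widetilde L^r_tB^{2+\beta}_{p,\infty}\hookrightarrow\widetilde L^r_tB^{\beta}_{p,\infty}$). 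There is essentially no genuine obstacle here: every step is a triangle/Bernstein inequality or the quoted interpolation lemma. The only points to watch are that the time--space norm must remain outside each dyadic block throughout --- which is why the Chemin--Lerner spaces $\widetilde L^r_tB^s_{p,\infty}$ appear on the right --- and that the hypothesis $\beta\in\,]-1,1[$ is exactly what guarantees $s_1<1<s_2$, equivalently the convergence of both geometric series; in particular the estimate is uniform in $\mu$.
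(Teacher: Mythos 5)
Your proof is correct and follows essentially the same route as the paper: the ``self-contained'' version you sketch (Bernstein on each dyadic block, splitting the sum at $N$, summing the two geometric series using $1-\beta>0$ and $1+\beta>0$, and choosing $2^{2N}\approx\|\omega_\mu\|_{\widetilde L^r_tB^{2+\beta}_{p,\infty}}/\|\omega_\mu\|_{\widetilde L^r_tB^{\beta}_{p,\infty}}$) is precisely the paper's argument, and your shortcut through the interpolation inequality \eqref{m1} with $\zeta=\frac{1+\beta}{2}$ is just a packaged form of the same computation.
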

\begin{proof} In order to establish this estimate, let $N\in\NN$ be an integer number that will be chosen later. We combine Biot-Savat law $\Delta v_{\mu}=\nabla^{\perp}\omega_{\mu}$ with interpolation in frequency and Bernstein inequality to write

\begin{eqnarray}\label{Ayat11}
\|\Delta v_{\mu}\|_{L^{r}_{t} L^p}&\le& \sum_{q\le N}\Vert\Delta_q\nabla^{\perp}\omega_{\mu}\Vert_{L^{r}_t L^p}+\sum_{q> N}\Vert\Delta_q\nabla^{\perp}\omega_{\mu}\Vert_{L^{r}_t L^p}\\
\nonumber&\le &\sum_{q\le N}2^{q(1-\beta)}2^{q\beta}\Vert\Delta_q\omega_{\mu}\Vert_{L^{r}_{t} L^p}+\sum_{q>N}2^{q(-1-\beta)}2^{q(2+\beta)}\Vert\Delta_{q}\omega_{\mu}\Vert_{L^{r}_{t} L^p}\\
\nonumber&\le & 2^{N(1-\beta)}\Vert\omega_{\mu}\Vert_{\widetilde L^{r}_t B^{\beta}_{p,\infty}}+2^{-N(1+\beta)}\Vert\omega_{\mu}\Vert_{\widetilde L^{r}_t B^{2+\beta}_{p,\infty}}.
\end{eqnarray}
Chosing $N$ be such that
\begin{equation*}
2^{N(1-\beta)}\Vert\omega_{\mu}\Vert_{\widetilde{L}^{r}_t B^{\beta}_{p,\infty}}\approx 2^{-N(1+\beta)}\Vert\omega_{\mu}\Vert_{\widetilde{L}^{r}_t B^{2+\beta}_{p,\infty}},
\end{equation*}
Hence
\begin{equation}\label{Ayat111}
2^{2N}\approx\frac{\Vert\omega_{\mu}\Vert_{\widetilde{L}^{r}_t B^{2+\beta}_{p,\infty}}}{\Vert\omega_{\mu}\Vert_{\widetilde{L}^{r}_t B^{\beta}_{p,\infty}}}.
\end{equation}
Plugging \eqref{Ayat111} in \eqref{Ayat11} to obtain the desired estimate, so the proof is completed.
\end{proof}
\hspace{0.5cm}At this stage, we state the general version of Theorem \eqref{Theo1.2}. More precisely, we will prove the following theorem. 
\begin{Theo}\label{Theo4.2}
Let $ (v_\mu,\theta_\mu)$ and $ (v,\theta)$ be the solution of the \eqref{Eq-3} and \eqref{Eq-4} respectively with  $ (v_\mu^0,\theta_\mu^0)$ and $ (v^0,\theta^0)$ their initial data which satisfy the conditions of Theorem \ref{Th-1}. Assume that  $(\omega_\mu^0,\omega^0) \in L^\infty \cap {\dot{B}_{{p,\infty}}^{\frac{1}{p}}}\times L^2 \cap L^\infty $ then we have
\begin{eqnarray*}
 \Pi(t) &\leq & Ce^{t+ V_\mu (t)+V(t)+\Vert \nabla \theta  {\Vert}_{ L^1_tL^\infty} +\Vert \nabla \theta  {\Vert}_{ L^2_tL^\infty} (\Vert \nabla \theta  {\Vert}_{L^2_t L^p}+ \Vert \nabla \theta _\mu {\Vert}_{L^2_t L^p}) }\Big( \Pi(0) \\&& +C_0 (1+t)^{7}  (\mu t)^{\frac{1}{2}+\frac{1}{p}} (1+\mu t) \Big),
\end{eqnarray*}
with $\Pi(t)=\Vert v_\mu(t) -v (t)\Vert_{L^p} + \Vert \theta_\mu (t)-\theta(t) \Vert_{L^p}$ and 
\begin{equation*}
V(t)=\int_0^t \Vert \nabla v(\tau)\Vert_{L^\infty}d\tau, \quad V_\mu (t)=\int_0^t \Vert \nabla v_\mu(\tau)\Vert_{L^\infty}d\tau.
\end{equation*}
\end{Theo}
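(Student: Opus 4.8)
Put $\delta v=v_\mu-v$, $\delta\theta=\theta_\mu-\theta$, $\delta p=p_\mu-p$ and write $\Pi(t)=\|\delta v(t)\|_{L^p}+\|\delta\theta(t)\|_{L^p}$. Subtracting the momentum equations of \eqref{Eq-3} and \eqref{Eq-4} and keeping $-\mu\Delta v_\mu$ on the right, $\delta v$ solves a pure transport equation,
\begin{equation*}
\partial_t\delta v+v_\mu\cdot\nabla\delta v+\delta v\cdot\nabla v+\nabla\delta p=\mu\Delta v_\mu+\delta\theta\,\vec e_2,\qquad \Div\delta v=0,
\end{equation*}
while subtracting the temperature equations and using $\kappa(\theta_\mu)\nabla\theta_\mu-\kappa(\theta)\nabla\theta=\kappa(\theta_\mu)\nabla\delta\theta+\big(\kappa(\theta_\mu)-\kappa(\theta)\big)\nabla\theta$ gives
\begin{equation*}
\partial_t\delta\theta+v_\mu\cdot\nabla\delta\theta+\delta v\cdot\nabla\theta-\nabla\cdot\!\big(\kappa(\theta_\mu)\nabla\delta\theta\big)=\nabla\cdot\!\big((\kappa(\theta_\mu)-\kappa(\theta))\nabla\theta\big).
\end{equation*}
The plan is to derive a coupled differential inequality for $\Pi$ and close by Gr\"onwall. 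Throughout we use the uniform bounds $\|\theta_\mu\|_{L^\infty_tL^\infty}\le\|\theta^0\|_{L^\infty}$ and $\|\theta\|_{L^\infty_tL^\infty}\le\|\theta^0\|_{L^\infty}$ (maximum principle), so $\delta\theta$ is bounded in $L^\infty_tL^\infty$, together with the a priori bounds of Propositions \ref{prop v}, \ref{prop theta} and Corollary \ref{cor}, which hold verbatim for \eqref{Eq-4} as well since the temperature equation and the $L^2,L^\infty$ bounds on the velocity are identical for the two systems.

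\textbf{The temperature difference.} Testing the $\delta\theta$ equation against $|\delta\theta|^{p-2}\delta\theta$ and integrating over $\RR^2$, the convection term $\int v_\mu\cdot\nabla\delta\theta\,|\delta\theta|^{p-2}\delta\theta$ vanishes by $\Div v_\mu=0$, and the left‑hand diffusion term $(p-1)\int\kappa(\theta_\mu)|\nabla\delta\theta|^2|\delta\theta|^{p-2}$ is nonnegative since $\kappa\ge\kappa_0^{-1}$ by \eqref{Eq-2}. For $\int\delta v\cdot\nabla\theta\,|\delta\theta|^{p-2}\delta\theta$ one pulls out $\|\nabla\theta\|_{L^\infty}$; for the source on the right one integrates by parts, uses $|\kappa(\theta_\mu)-\kappa(\theta)|\le\kappa_0|\delta\theta|$ (from \eqref{Eq-2}), bounds $|\nabla\delta\theta|\le|\nabla\theta_\mu|+|\nabla\theta|$, pulls out $\nabla\theta$ in $L^\infty$ and applies H\"older — thereby \emph{avoiding any loss of a derivative on} $\delta\theta$ by trading on its uniform $L^\infty$ bound instead of on the parabolic gain. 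Dividing by $\|\delta\theta\|_{L^p}^{p-1}$ this gives
\begin{equation*}
\frac{d}{dt}\|\delta\theta(t)\|_{L^p}\le \|\nabla\theta\|_{L^\infty}\|\delta v\|_{L^p}+C\,\|\nabla\theta\|_{L^\infty}\big(\|\nabla\theta\|_{L^p}+\|\nabla\theta_\mu\|_{L^p}\big).
\end{equation*}

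\textbf{The velocity difference and the viscous error.} An $L^p$ energy estimate for the transport equation satisfied by $\delta v$ — the pressure being absorbed by the $L^p$ continuity of the Leray projector and Riesz transforms, and the contribution of $v_\mu\cdot\nabla\delta v$ vanishing by incompressibility — yields
\begin{equation*}
\frac{d}{dt}\|\delta v(t)\|_{L^p}\le C\,\|\nabla v\|_{L^\infty}\|\delta v\|_{L^p}+\|\delta\theta\|_{L^p}+\mu\,\|\Delta v_\mu\|_{L^p}.
\end{equation*}
The delicate term is $\mu\|\Delta v_\mu\|_{L^1_tL^p}$, and here lies the fractional power of $\mu$. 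We invoke Proposition \ref{Delta v} with $\beta$ chosen near $\tfrac1p$: since $\omega_\mu^0=\mathbf 1_{\Omega_0}\in\dot B^{1/p}_{p,\infty}$ (the proposition stated just before Proposition \ref{Delta v}) and $\partial_1\theta_\mu\in L^1_tB^{1/p}_{p,\infty}$ by Proposition \ref{prop theta}, the persistence estimate of Proposition \ref{prop2.9} gives $\|\omega_\mu\|_{\widetilde L^1_tB^{1/p}_{p,\infty}}\le C_0(1+t)^{7}e^{CV_\mu(t)}$, uniformly in $\mu$, while the maximal smoothing of Proposition \ref{prop2.10} (with $s=1/p$, $r=1$) gives $\mu\|\omega_\mu\|_{\widetilde L^1_tB^{2+1/p}_{p,\infty}}\le C_0(1+t)^{7}e^{CV_\mu(t)}(1+\mu t)$. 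Interpolating these two through Proposition \ref{Delta v} produces
\begin{equation*}
\mu\,\|\Delta v_\mu\|_{L^1_tL^p}\le C_0(1+t)^{7}e^{CV_\mu(t)}\,(\mu t)^{\frac12+\frac1p}\,(1+\mu t).
\end{equation*}

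\textbf{Conclusion.} Adding the two differential inequalities, $\Pi$ satisfies $\frac{d}{dt}\Pi(t)\le a(t)\Pi(t)+b(t)$ with $a(t)\lesssim 1+\|\nabla v_\mu\|_{L^\infty}+\|\nabla v\|_{L^\infty}+\|\nabla\theta\|_{L^\infty}+\|\nabla\theta\|_{L^\infty}\big(\|\nabla\theta\|_{L^p}+\|\nabla\theta_\mu\|_{L^p}\big)$ and $b(t)\lesssim\mu\|\Delta v_\mu\|_{L^p}$. Integrating $a$ in time (Cauchy--Schwarz in time on the last term) and using the bound for $\mu\|\Delta v_\mu\|_{L^1_tL^p}$ above together with $\mu\in]0,1[$, Gr\"onwall's lemma delivers exactly the stated estimate for $\Pi(t)$. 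I expect the main obstacle to be precisely the nonlinear‑diffusion difference $\nabla\cdot\big((\kappa(\theta_\mu)-\kappa(\theta))\nabla\theta\big)$: this is where the temperature dependence genuinely enters, where \eqref{Eq-2} (the Lipschitz and lower bounds on $\kappa$) and — upstream, in Proposition \ref{prop theta} — the smallness \eqref{Assup-Th-1} are indispensable, and where one must carefully balance the H\"older exponents so as not to lose a derivative. A secondary difficulty is extracting the sharp $(\mu t)^{1/2+1/p}$ from $\mu\Delta v_\mu$ while only knowing that the patch vorticity lies in $\dot B^{1/p}_{p,\infty}$, which dictates the particular interpolation used in Proposition \ref{Delta v}.
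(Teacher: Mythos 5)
Your proposal follows essentially the same route as the paper: the same difference system for $(\delta v,\delta\theta,\delta p)$, the same $L^p$ energy estimates with the pressure handled by Riesz-transform continuity, the same Taylor-formula bound $|\kappa(\theta_\mu)-\kappa(\theta)|\le\kappa_0|\delta\theta|$ for the nonlinear-diffusion difference, and the same low/high Besov interpolation of Proposition \ref{Delta v} (persistence for $\widetilde L^\infty_t\dot B^{1/p}_{p,\infty}$, maximal smoothing for $\widetilde L^1_t B^{2+1/p}_{p,\infty}$) to extract the fractional power of $\mu$ from $\mu\|\Delta v_\mu\|_{L^1_tL^p}$. The only caveat is the exponent: with $\beta=1/p$ the interpolation yields $(\mu t)^{\frac12+\frac1{2p}}(1+\mu t)^{\frac12-\frac1{2p}}$ rather than the $(\mu t)^{\frac12+\frac1p}(1+\mu t)$ you (and the theorem statement) write — a discrepancy already present in the paper itself, whose own proof and Theorem \ref{Theo1.2} carry the exponent $\frac12+\frac1{2p}$.
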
     
\begin{proof}
Taking the difference between \eqref{Eq-3} and \eqref{Eq-4}, by setting $ U=v_\mu -v, \Theta =\theta_\mu -\theta $ and $P =p_\mu-p$ we find out that the triplet $(U,\Theta,P)$ gouverns the following evolution system. 
\begin{displaymath}\label{D-mu}
\left\{ \begin{array}{ll} 
\partial_{t} U+v_\mu \cdot\nabla U-\mu\Delta v_\mu =-\nabla P+\Theta \vec{e}_2 -U \cdot\nabla  v,  & \\
\partial_{t}\Theta +v_\mu\cdot\nabla \Theta -\nabla\cdot  \big(\kappa(\theta_\mu) \nabla \Theta  \big)=-U \cdot\nabla  \theta + \nabla\cdot \big(( \kappa(\theta_\mu)-\kappa(\theta ))\nabla \theta)\big),  & \\ 
\nabla\cdot U =0, &\\ 
({U},{\Theta})_{| t=0}=({U}_0,{\Theta}_0).\tag{D$_\mu$} 
\end{array} \right .
\end{displaymath}
Multiplying the first equation in the system \eqref{D-mu} by $ U|U|^{p-2} $ and integrating by part over $\RR^2$, bearing in mind that $\nabla \cdot v_{\mu}=\Div v=0 $ and H\"older's inequality ensure that
\begin{equation*} 
\frac{d}{dt} \Vert U(t) {\Vert}_{L^p}  \leq \Vert \nabla P(t) \Vert_{L^p} + \mu \Vert \Delta v_\mu (t) \Vert_{L^p} +\Vert \nabla v (t) \Vert_{L^\infty}\Vert U(t) {\Vert}_{L^p}  + \Vert \Theta(t) \Vert_{L^p}.
\end{equation*}
Integrating in time this differential inequality over $[0,t]$, one obtains 
\begin{eqnarray} \label{U}
 \Vert U(t) {\Vert}_{L^p} &\lesssim & \Vert U^0 {\Vert}_{L^p} + \int_0^t \Vert \nabla P(\tau) \Vert_{L^p}   d\tau +\mu \int_0^t \Vert \Delta v_\mu (\tau) \Vert_{L^p}  d\tau \nonumber\\ 
&+&\int_0^t  \Vert \nabla v (\tau) \Vert_{L^\infty}\Vert U(\tau) {\Vert}_{L^p} d\tau + \int_0^t  \Vert \Theta(\tau) \Vert_{L^p} d\tau 
\end{eqnarray}
On the other hand, we have $  -\nabla P=\partial_t U + v_\mu \cdot\nabla U +U \cdot\nabla v-\mu \Delta v_\mu  -\Theta \vec{e}_2$, thus by taking the divergence operator 

$$ -\Delta P= \nabla\cdot (v_\mu \cdot\nabla U + U \cdot\nabla v)) -\partial_2 \Theta   .$$
Because $\nabla\cdot v_\mu =\nabla\cdot v=0$ which implies that $\nabla\cdot(v_\mu \cdot\nabla U)=\nabla\cdot(U \cdot\nabla v_\mu)$ this yields 
$$ -\Delta P= \nabla\cdot (U \cdot(v_\mu + \nabla v)) -\partial_2 \Theta   .$$
Applying the operator $\nabla \Delta^{-1}$,then after a straightforward computation it holds 
\begin{equation*}
 -\nabla  P = \nabla \Delta^{-1}  \nabla\cdot (U\cdot \nabla (v_\mu + v)) +\nabla \Delta^{-1} \partial_2 \Theta.
\end{equation*} 
The quantity $ -\nabla  P$ can be seen as a Riesz transform. Kowning that this latter maps $L^p$ into itself for $p \in[2,+\infty[$, hence we find
\begin{equation*}
\Vert \nabla P(\tau) \Vert_{L^p} \lesssim \Vert U(\tau) \Vert_{L^p} (\Vert \nabla  v_{\mu}(\tau) \Vert_{L^\infty}+\Vert \nabla v(\tau) \Vert_{L^\infty}) + \Vert \Theta( \tau) \Vert_{L^p}. 
\end{equation*}
Plugging the last estimate into \eqref{U} we shall have 
\begin{eqnarray}\label{U 1 }
\Vert U(t) {\Vert}_{L^p} & \lesssim & \Vert U^0 {\Vert}_{L^p}+ \int_0^t \Vert U(\tau) \Vert_{L^p} (\Vert \nabla  v_{\mu}(\tau) \Vert_{L^\infty}+\Vert \nabla v(\tau) \Vert_{L^\infty}) d\tau \nonumber\\ &+& \int_0^t  \Vert \Theta(\tau) \Vert_{L^p} d\tau +\mu\int_0^t \Vert \Delta v_\mu (\tau) \Vert_{L^p}  d\tau.
\end{eqnarray} 
By the same fashion for $\Theta-$equation we write
\begin{eqnarray*}
  \frac{d}{dt} \Vert \Theta(t) {\Vert}^p_{L^p}+\int_{\mathbb{R}^2} \kappa(\theta_\mu)\nabla \Theta  \nabla(|\Theta| ^{p-2} \Theta)   dx &=&-\int_{\mathbb{R}^2} U\cdot\nabla \theta |\Theta| ^{p-2} \Theta dx \nonumber\\&-& \int_{\mathbb{R}^2} ( \kappa(\theta_\mu)-\kappa(\theta ))\nabla \theta   \nabla(|\Theta| ^{p-2} \Theta) dx.      
\end{eqnarray*}
Thanks to \eqref{Eq-2}, we get 
\begin{eqnarray}\label{kappa}
\frac{d}{dt} \Vert \Theta(t) {\Vert}^p_{L^p}  &+& \kappa_0^{-1} (p-1) \int_{\mathbb{R}^2} |\nabla \Theta|^2 |\Theta| ^{p-2}    dx \leq \int_{\mathbb{R}^2} |U\cdot \nabla \theta ||\Theta| ^{p-1}  dx      \nonumber\\  &+ & (p-1)  \int_{\mathbb{R}^2} | \kappa(\theta_\mu)-\kappa(\theta )| | \nabla \theta  \cdot \nabla \Theta ||\Theta| ^{p-2}  dx.\nonumber\\
%&\le& \|U\|_{L^p}\|\nabla\theta\|_{L^\infty}\|\Theta\|_{L^p} ^{p-1}+(p-1)\|\kappa(\theta_\mu)-\kappa(\theta)\|_{L^\infty}
\end{eqnarray}
On the other hand, Taylor's formula yields  
\begin{eqnarray*}
\kappa(\theta_\mu)-\kappa(\theta)& =& (\theta_\mu -\theta )\int_{0}^{1}\kappa'(\theta_\mu +\vartheta(\theta-\theta_\mu))d\vartheta\\
&=&\Theta\int_{0}^{1}\kappa'(\theta_\mu +\vartheta(\theta-\theta_\mu))d\vartheta.
\end{eqnarray*}
Thus \eqref{Eq-2} leads to
\begin{equation*}%\label{kaapa'} 
|\kappa(\theta_\mu)-\kappa(\theta )| \le \kappa_{0}|\Theta|.  
\end{equation*}
%On the other hand,
%\begin{equation*}
%\Vert \kappa'(\theta_\mu +\vartheta(\theta-\theta_\mu)) \Vert_{L^\infty}\le C\sup_{|x|\le\|\theta_\mu +\vartheta(\theta-\theta_\mu)\|_{L^\infty}}|\kapp'(x)|.
%\end{equation*}
%But, the fact $\|\theta_\mu\|_{L^\infty}\le \|\theta_\mu^{0}\|_{L^\infty}, \|\theta_\mu\|_{L^\infty}\le \|\theta_\mu^{0}\|_{L^\infty}$ and 
%\begin{equation*}
%\Vert \kappa'(\theta_\mu +\vartheta(\theta-\theta_\mu)) \Vert_{L^\infty}\le C\sup_{|x|\le2\|\theta_\mu^{0}\|_{L^\infty}+\|\theta_{0}\|_{L^\infty})\|_{L^\infty}}|\kapp'(x)|\le C\kappa_0.
%\end{equation*}
Plug the last estimate in \eqref{kappa}, then it follows
\begin{eqnarray}
\frac{d}{dt} \Vert \Theta(t) {\Vert}^p_{L^p}  + \kappa_0^{-1} (p-1) \int_{\mathbb{R}^2} |\nabla \Theta|^2 |\Theta| ^{p-2}    dx &\leq & \int_{\mathbb{R}^2} |U\cdot \nabla \theta ||\Theta| ^{p-1}  dx      \nonumber\\  &+ & \kappa_0 \int_{\mathbb{R}^2}  |  \nabla \theta  \cdot \nabla \Theta ||\Theta| ^{p-1}  dx .   
\end{eqnarray}
Thus H\"older's inequality ensures that 
\begin{equation*}
\frac{d}{dt} \Vert \Theta(t) {\Vert}_{L^p} \leq  \Vert \nabla \theta (t) {\Vert}_{L^\infty}\Vert  U(t) {\Vert}_{L^p}+\kappa_0 \Vert \nabla \theta (t) {\Vert}_{L^\infty}  \Vert \nabla \Theta(t) {\Vert}_{L^p}.
\end{equation*}
Integrating in time over $[0,t]$, it follows 
\begin{equation}\label{T}
\Vert \Theta(t) {\Vert}_{L^p}  \leq\Vert \Theta^0 {\Vert}_{L^p} + \kappa_0  \int_0^t \Vert \nabla \theta (\tau) {\Vert}_{L^\infty}  \Vert \nabla \Theta(\tau) {\Vert}_{L^p}d\tau +\int_0^t \Vert \nabla \theta (\tau) {\Vert}_{L^\infty}   \Vert  U(\tau) {\Vert}_{L^p} d\tau .
 \end{equation} 
Gathering \eqref{U 1 } and \eqref{T}, one has 
\begin{eqnarray}
\Pi(t)  &\lesssim & \Pi(0) + \int_0^t  \Pi(\tau) \Big(1 +\Vert \nabla \theta (\tau) {\Vert}_{L^\infty}  +\Vert \nabla  v_{\mu}(\tau) \Vert_{L^\infty}+\Vert \nabla v(\tau) \Vert_{L^\infty} \Big)d\tau  \nonumber\\  &+&  \int_0^t\Vert \nabla \theta (\tau) {\Vert}_{L^\infty}  \big(\Vert \nabla \theta( \tau) {\Vert}_{L^p}+ \Vert \nabla \theta _\mu(\tau) {\Vert}_{L^p}\big) d\tau +\mu \int_0^t \Vert \Delta v_\mu (\tau) \Vert_{L^p}  d\tau,
\end{eqnarray}
with $\Pi(t)=\Vert v_\mu(t) -v (t)\Vert_{L^p} + \Vert \theta_\mu (t)-\theta(t) \Vert_{L^p } $, so Gronwall's inequality gives 
\begin{equation*}
\Pi(t) \leq Ce^{t+ V_\mu (t)+V(t)+\Vert \nabla \theta  {\Vert}_{ L^1_tL^\infty} +\int_0^t\Vert \nabla \theta (\tau) {\Vert}_{L^\infty} \big(\Vert \nabla \theta(\tau) {\Vert}_{L^p}+ \Vert \nabla \theta _\mu(\tau) {\Vert}_{L^p}\big) d\tau}\Big(\Pi(0) + \mu \Vert \Delta v_\mu \Vert_{ L^1_tL^p}\Big).
 \end{equation*}
Again, H\"older's inequality in time variable provides us
\begin{equation}\label{gamma}
\Pi(t) \leq Ce^{t+ V_\mu (t)+V(t)+\Vert \nabla \theta  {\Vert}_{ L^1_tL^\infty} +\Vert \nabla \theta  {\Vert}_{ L^2_tL^\infty} \big(\Vert \nabla \theta  {\Vert}_{L^2_t L^p}+ \Vert \nabla \theta _\mu {\Vert}_{L^2_t L^p}\big) }\Big(\Pi(0) + \mu \Vert \Delta v_\mu \Vert_{ L^1_tL^p}\Big).
 \end{equation}
Now, let us move to bound the term $ \mu \Vert \Delta v_\mu \Vert_{ L^1_tL^p} $ which considered as the source of the rate of convergence. To do this, Proposition \ref{Delta v} for  $\beta=\frac{1}{p}$ and $r=1$ yields
\begin{equation}\label{Delta v}
\mu \Vert \Delta v_\mu \Vert_{ L^1_tL^p} \leq \mu \Vert  \omega_\mu  \Vert_{\widetilde{L} ^1_t {B_{{p,\infty}}^\frac{1}{p}}}^{\frac{1}{2}+\frac{1}{2p}}\Vert  \omega_\mu  \Vert_{\widetilde{L} ^1_t {B_{{p,\infty}}^{2+\frac{1}{p}}}}^{\frac{1}{2}-\frac{1}{2p}}.
\end{equation}
Setting 
\begin{equation*}
\textnormal{IV}_1=\mu \Vert  \omega_\mu  \Vert_{\widetilde{L} ^1_t {B_{{p,\infty}}^\frac{1}{p}}}^{\frac{1}{2}+\frac{1}{2p}}, \quad\textnormal{IV}_2=\Vert  \omega_\mu  \Vert_{\widetilde{L} ^1_t {B_{{p,\infty}}^{2+\frac{1}{p}}}}^{\frac{1}{2}-\frac{1}{2p}}.
\end{equation*}
H\"older's inequality and Proposition \ref{prop2.10} confirm us 
\begin{eqnarray}\label{w I-1}
 \textnormal{IV}_1 &\leq &\mu t^{\frac{1}{2}+\frac{1}{2p}}  \Vert  \omega_\mu  \Vert_{\widetilde{L} ^\infty_t {B_{{p,\infty}}^\frac{1}{p}}}^{\frac{1}{2}+\frac{1}{2p}} \nonumber\\  &\leq & C  
 e^{CV_\mu(t)}  \mu t^{\frac{1}{2}+\frac{1}{2p}} \Big( \Vert  \omega_\mu ^0 \Vert_{ {B_{{p,\infty}}^\frac{1}{p}}} +  \Vert \nabla \theta_\mu  \Vert_{L ^1_t {B_{{p,\infty}}^\frac{1}{p}}} \Big)^{\frac{1}{2}+\frac{1}{2p}}.
\end{eqnarray}
But, in view of the continuity $\nabla: B_{{p,\infty}}^{\frac{1}{p}+1}\rightarrow B_{{p,\infty}}^\frac{1}{p}$,  we infer that
\begin{equation}\label{w I1}
  \Vert \nabla \theta_\mu  \Vert_{L ^1_t {B_{{p,\infty}}^\frac{1}{p}}}  \leq   \Vert \theta_\mu  \Vert_{L  ^1_t {B_{{p,\infty}}^{\frac{1}{p}+1}}}. 
\end{equation}
In accordance with Proposition \ref{Emb}, we have $W^{2,p} \hookrightarrow   B_{{p,\infty}}^{\frac{1}{p}+1}$. Consequently,
\begin{equation}\label{Eq-MZ0}
\Vert \theta_\mu  \Vert_{L ^1_t {B_{{p,\infty}}^{\frac{1}{p}+1}}}   \lesssim  \Vert \theta_\mu  \Vert_{L ^1_t W^{2,p}}.
\end{equation}
Let us bound $\Vert \theta_\mu  \Vert_{L ^1_t W^{2,p}}\triangleq\Vert \theta_\mu  \Vert_{L ^1_t L^{p}}+\Vert \nabla\theta_\mu  \Vert_{L ^1_t L^{p}}+\Vert \nabla^2\theta_\mu  \Vert_{L ^1_t L^{p}}$. For the first and third terms, it is enough to apply {\bf(i)} in Proposition \ref{prop v} and Proposition \ref{prop theta} to get 
\begin{equation}\label{Eq-MZ}
\Vert \theta_\mu  \Vert_{L ^1_t L^{p}}\le t\Vert \theta_\mu^{0}  \Vert_{L^{p}},\quad \Vert \nabla^2\theta_\mu  \Vert_{L ^1_t L^{p}}\le C_0(1+t)^{7}.
\end{equation}
For the second term $\Vert \nabla\theta_\mu  \Vert_{L ^1_t L^{p}}$, employ Gagliardo-Nirenberg's and Young's inequalities, it follows   
\begin{eqnarray}\label{Eq-MZ1}
\Vert \nabla \theta_\mu(t) \Vert_{L^{p}} &\le& \Vert \nabla \theta_\mu(t) \Vert_{L^{2}}^{\frac{2}{p}}\Vert \nabla \theta_\mu(t) \Vert_{L^{\infty}} ^{1-\frac{2}{p}}\nonumber \\ &\le& \Vert \nabla \theta_\mu(t) \Vert_{L^{2}}+\Vert \nabla \theta_\mu(t) \Vert_{L^{\infty}}.
\end{eqnarray}
Integrating in time over $[0,t]$ and employ the Cauchy-Schwartz inequality with respect to time, one gets 
\begin{eqnarray*}
\Vert \nabla \theta_\mu \Vert_{L^1_tL^{p}} \le  t^{\frac{1}{2}} \Vert \nabla \theta_\mu \Vert_{L^2_tL^{2}} +\Vert \nabla \theta_\mu \Vert_{L^1_tL^{\infty}},
\end{eqnarray*}
exploring \eqref{grad teta L^2} and \eqref{gred teta L infine}, it holds   
\begin{eqnarray*}
\Vert \nabla \theta_\mu \Vert_{L^1_tL^{p}} \le  C_0(1+t)^{7},
\end{eqnarray*}
combined with \eqref{Eq-MZ0} and \eqref{Eq-MZ} we infer that  
\begin{eqnarray*}
\Vert \theta_\mu  \Vert_{L ^1_t {B_{{p,\infty}}^{\frac{1}{p}+1}}}  \leq  C_0(1+t)^{7}.  
\end{eqnarray*}
 Inserting this estimate in \eqref{w I1} we readily get
\begin{equation}\label{grad T B}
\Vert \nabla \theta_\mu  \Vert_{L ^1_t {B_{{p,\infty}}^\frac{1}{p}}}  \leq C_0(1+t)^{7}. 
\end{equation}
Plugging \eqref{grad T B} in \eqref{w I-1} we deduce that   
\begin{equation}\label{w I*1}
 \mathrm{IV}_1  \leq C  e^{CV_\mu(t)}  \mu t^{\frac{1}{2}+\frac{1}{2p}} \Big( \Vert  \omega_\mu ^0 \Vert_{ {B_{{p,\infty}}^\frac{1}{p}}} + C_0(1+t)^{7} 
 \Big)^{\frac{1}{2}+\frac{1}{2p}}
\end{equation} 
Let us move to bound the term $\mathrm{IV}_2$. By exploiting Proposition \ref{prop2.10} and \eqref{grad T B}, on account $L ^1_t {B_{{p,\infty}}^\frac{1}{p}}=\widetilde{L} ^1_t {B_{{p,\infty}}^\frac{1}{p}}$ we find that   
\begin{eqnarray}\label{w I*2}
  \mathrm{IV}_2 &\leq & C e^{CV_\mu (t)} \mu^{\frac{1}{2p}-\frac{1}{2}} (1+\mu t)^{\frac{1}{2}-\frac{1}{2p}}   \Big( \Vert  \omega_\mu ^0 \Vert_{ {B_{{p,\infty}}^\frac{1}{p}}} +  \Vert \nabla \theta_\mu  \Vert_{\widetilde{L} ^1_t {B_{{p,\infty}}^\frac{1}{p}}}
 \Big)^{\frac{1}{2}-\frac{1}{2p}} \nonumber\\  &\leq & C e^{CV_\mu (t)} \mu^{\frac{1}{2p}-\frac{1}{2}} (1+\mu t)^{\frac{1}{2}-\frac{1}{2p}}   \Big( \Vert  \omega_\mu ^0 \Vert_{ {B_{{p,\infty}}^\frac{1}{p}}} +  C_0(1+t)^{7} \Big)^{\frac{1}{2}-\frac{1}{2p}}
\end{eqnarray}
Substituting \eqref{w I*1} and \eqref{w I*2} into \eqref{Delta v} to conclude that
\begin{equation*}
\mu \Vert \Delta v_\mu \Vert_{ L^1_tL^p} \lesssim  C_0(1+t)^{7}e^{CV_\mu (t)} (\mu t)^{\frac{1}{2}+\frac{1}{2p}} (1+\mu t)^{\frac{1}{2}-\frac{1}{2p}},
\end{equation*} 
together with \eqref{gamma}, hence Theorem \ref{Theo4.2} is proved.
\end{proof}
%\begin{proof} 
\subsection*{Proof of Theorem \ref{Theo1.2}} We distinguish two cases.\\ 
{\it First case: $p\in [2,+\infty[$}.  From Theorem \ref{Theo4.2}, we recall that 
\begin{eqnarray}\label{Eq-MZ4}
 \Pi(t) &\leq & Ce^{t+ V_\mu (t)+V(t)+\Vert \nabla \theta  {\Vert}_{ L^1_tL^\infty} +\Vert \nabla \theta  {\Vert}_{ L^2_tL^\infty} (\Vert \nabla \theta  {\Vert}_{L^2_t L^p}+ \Vert \nabla \theta _\mu {\Vert}_{L^2_t L^p}) } \Big( \Pi(0)\\
\nonumber&&+(1+t)^{8}(\mu t)^{\frac{1}{2}+\frac{1}{p}}(1+\mu t)  \Big).
\end{eqnarray}
To close our claim, we must estimate the two terms $\Vert \nabla \theta  {\Vert}_{L^2_t L^p}$ and $\Vert \nabla \theta  {\Vert}_{ L^2_tL^\infty}$. For this aim, Gagliardo-Nirenberg's and Young's inequlities tell us 
\begin{equation}
\Vert \nabla \theta  {\Vert}_{ L^2_tL^\infty}\lesssim \Vert \nabla \theta  {\Vert}_{ L^2_tL^2}+\Vert \nabla^2 \theta  {\Vert}_{ L^2_t L^p}. 
\end{equation}
From \eqref{grad teta L^2}  and  \eqref{grad 2 theta}, it happens  
\begin{eqnarray}\label{Eq-MZ3}
\Vert \nabla \theta_\mu \Vert_{L^{2}_tL^\infty}  &\le&  \kappa_0 \Vert  \theta^0_\mu \Vert^2_{ L^{2}} +   C_0(1+t)^{7}\\\nonumber &\le&  C_0(1+t)^{7}  .
\end{eqnarray}
Concerning $\Vert \nabla \theta  {\Vert}_{L^2_t L^p}$, again Gagliardo-Nirenberg's and Young's inequlities give in view of \eqref{grad teta L^2} and \eqref{Eq-MZ3}the following
\begin{eqnarray*}
\Vert \nabla \theta_\mu \Vert_{L^{2}_t L^p} &\lesssim &  \Vert \nabla  \theta_\mu  \Vert_{L^{2}_t L^{2}}+  \Vert \nabla  \theta_\mu \Vert_{L^{2}_t L^{\infty}}   \\ & \le &  k_0t^{\frac{1}{2}}\Vert \theta_\mu^{0}  \Vert_{L^{2}_t L^{2}}+  \Vert \nabla  \theta_\mu  \Vert_{L^{2}_t L^{\infty}} \\&\le&  C_0(1+t)^{7} .
\end{eqnarray*}
Collecting the last two estimates, hence \eqref{Lip v} and \eqref{gred teta L infine} leading to 
\begin{equation}\label{proof limit v pour p fine}
\Vert v_\mu(t) -v (t)\Vert_{L^p} + \Vert \theta_\mu (t)-\theta(t) \Vert_{L^p } \leq C_0e^{\exp{C_0t^{8}}} (\mu t)^{\frac{1}{2}+\frac{1}{2p}}. 
\end{equation}  
{\it Second case for $p=\infty$}. Gagliardo-Nirenberg's inequality yields 
\begin{eqnarray}
\Vert v_\mu(t) -v (t)\Vert_{L^\infty} + \Vert \theta_\mu (t)-\theta(t) \Vert_{L^\infty } &\leq &\Vert v_\mu(t) -v (t)\Vert_{L^2}^{\frac{1}{2}}  \Vert \nabla v_\mu(t) -\nabla v (t)\Vert_{L^\infty}^{\frac{1}{2}}  \nonumber \\&+& \Vert \theta_\mu (t)-\theta(t) \Vert_{L^2}^{\frac{1}{2}}  \Vert \nabla \theta_\mu (t)-\nabla \theta(t) \Vert_{L^\infty }^{\frac{1}{2}}. \nonumber 
\end{eqnarray}
In particular, we explore \eqref{proof limit v pour p fine} for $p=2$ and \eqref{G-th}, then it follows
\begin{equation}\label{inviscid limit for v in} 
 \Vert v_\mu(t) -v (t)\Vert_{L^\infty} + \Vert \theta_\mu (t)-\theta(t) \Vert_{L^\infty }  \leq C_0e^{\exp C_0 t^{8}} (\mu t)^{\frac{1}{4}}.
\end{equation}
On the other hand, we have $\omega_\mu -\omega = \rot( v_\mu -v )$. Taking the $L^p-$norm for this latter and using the Bernstein inequality, one obtains
\begin{eqnarray}\label{4.11}
\Vert \omega_\mu (t)-\omega(t)  \Vert_{L^p} &\leq &  \Vert  \nabla( v_\mu(t)-v(t) ) \Vert_{L^p} \nonumber \\ &\leq &  C \Vert  v_\mu(t)-v(t)  \Vert_{B^1_{p,1}}
\end{eqnarray}
Now, let $N$ be an integer which will be chosen later. Again Bernstein's inequality gives 
\begin{eqnarray}\label{4.12}
\nonumber\Vert  v_\mu(t)-v(t)  \Vert_{B^1_{p,1}} &\lesssim &  \sum_{q \leq N} 2^{q}\Vert \Delta_q(v_\mu(t)-v(t))  \Vert_{L^p} +\sum_{q > N} 2^{\frac{-q}{p}} 2^{\frac{q}{p}}\Vert \Delta_q\nabla(v_\mu(t)-v(t)) \Vert_{L^{p}}\\  
&\lesssim &  2^N\Vert   v_\mu(t)-v(t)  \Vert_{L^p} + 2^{\frac{-N}{p}}\sup_{q\ge-1}2^{\frac{q}{p}} \Vert \Delta_q\nabla(  v_\mu(t)-v(t) ) \Vert_{L^p}\\
&\lesssim& 2^N\Vert   v_\mu(t)-v(t)  \Vert_{L^p} + 2^{\frac{-N}{p}} \Vert \nabla(  v_\mu(t)-v(t) ) \Vert_{B^{\frac1p}_{p,\infty}}.\nonumber
\end{eqnarray}
Choosing $N$ be such that
\begin{equation*}
2^{N(1+\frac{1}{p})} \approx   \frac{\Vert \nabla(  v_\mu(t)-v(t) ) \Vert_{B^{\frac{1}{p}}_{p,\infty}}}{\Vert   v_\mu(t)-v(t)  \Vert_{L^p} },
\end{equation*}
combined with Calder\'on-Zugmund estimate, \eqref{4.11} and \eqref{4.12} we write
\begin{equation*}
\Vert \omega_\mu (t)-\omega(t)  \Vert_{L^p}   \lesssim  \Vert  v_\mu(t)-v(t)  \Vert_{L^p} ^{\frac{1}{p+1}} \Vert \omega_\mu(t)-\omega(t)  \Vert_{B^{\frac{1}{p}}_{p,\infty}}.
\end{equation*}
Consequently,  {\bf(1)-}Theorem \ref{Theo1.2} implies 
\begin{equation*}
\Vert \omega_\mu (t)-\omega(t)  \Vert_{L^p} \leq  C_0e^{\exp{C_0t^{8}}}  (\mu t)^\frac{1}{2p}
(1+\mu t) \Vert  \omega(t)\Vert \omega_\mu(t)-\omega(t)  \Vert_{B^{\frac{1}{p}}_{p,\infty}}. 
\end{equation*}
For the term $ \Vert \omega(t) \Vert_{B^{\frac{1}{p}}_{p,\infty}} $ already esteemed in \eqref{w I*1}  then we conclude  that
$$ \Vert \omega_\mu (t)-\omega(t)  \Vert_{L^p} \leq  C_0e^{\exp{C_0t^{8}}} (\mu t)^\frac{1}{2p}$$
To finalize, let us estimate $\|\omega_{\mu}(t)-\omega(t)\|_{B^{\frac1p}_{p,\infty}}$. To do this, using the persistence of Besov spaces explicitly formulated in the Proposition \ref{prop2.9}, one gets
\begin{eqnarray*}
\|\omega_{\mu}(t)-\omega(t)\|_{B^{\frac1p}_{p,\infty}}&\le& \|\omega_{\mu}(t)\|_{B^{\frac1p}_{p,\infty}}+\|\omega(t)\|_{B^{\frac1p}_{p,\infty}}\\
\nonumber &\le& Ce^{C(V_{\mu}(t)+V(t))}\Big(\|\omega_{\mu}^{0}\|_{B^{\frac1p}_{p,\infty}}+\|\omega^{0}\|_{B^{\frac1p}_{p,\infty}}+\|\nabla\theta_{\mu}\|_{L^1_t B^{\frac1p}_{p,\infty}}+\|\nabla\theta\|_{L^1_t B^{\frac1p}_{p,\infty}}\Big).
\end{eqnarray*}
The last two terms of the right-hand side stem from similar arguments as in \eqref{grad T B}.\\
{\bf(3)} By definition of $\Psi_\mu$ and $\Psi$ we write
\begin{equation*}
\Psi_\mu(t,x) -\Psi(t,x)=\int_0^t v_\mu(\tau,\Psi_\mu(\tau,x)) -v(\tau,\Psi_\mu(\tau,x))d\tau.
\end{equation*}
Consequently, 
\begin{eqnarray*}
|\Psi_\mu(t,x) -\Psi(t,x)|  & \leq  & \int_0^t |v_\mu(\tau,\Psi_\mu(\tau,x)) -v(\tau,\Psi_\mu(\tau,x)) |d\tau \nonumber \\ &+&\int_0^t |v(\tau,\Psi_\mu(\tau,x)) -v(\tau,\Psi_\mu(\tau,x)) |d\tau \nonumber.
\end{eqnarray*}
The first term of r.h.s. follows from \eqref{inviscid limit for v in}, that is
\begin{equation}\label{flow-1}
\int_0^t |v_\mu(\tau,\Psi_\mu(\tau,x)) -v(\tau,\Psi_\mu(\tau,x)) |d\tau\le C_0e^{\exp C_0 t^{8}} (\mu t)^{\frac{1}{4}}. 
\end{equation}
For the second term, exploring in general case the following relation  
\begin{eqnarray*}
|f\circ\Psi_{\mu}-f\circ\Psi|&=&\frac{|f\circ\Psi_{\mu}-f\circ\Psi|}{|\Psi_{\mu}-\Psi|}|\Psi_{\mu}-\Psi|\\
&\le&\|\nabla f\|_{L^\infty}\|\Psi_{\mu}-\Psi\|_{L^\infty}.
\end{eqnarray*}
one may deduce that  
\begin{equation}\label{flow-2}
\int_0^t |v(\tau,\Psi_\mu(\tau,x)) -v(\tau,\Psi(\tau,x)) |d\tau\le \int_0^t\|\nabla v(\tau)\|_{L^\infty}\|\Psi_{\mu}(\tau)-\Psi(\tau)\|_{L^\infty}d\tau. 
\end{equation}
Gathering \eqref{flow-1} and \eqref{flow-2}, then it follows
\begin{equation*}
|\Psi_\mu(t,x) -\Psi(t,x)|\le C_0e^{\exp C_0 t^{8}} (\mu t)^{\frac{1}{4}}+\int_0^t\|\nabla v(\tau)\|_{L^\infty}\|\Psi_{\mu}(\tau)-\Psi(\tau)\|_{L^\infty}d\tau. 
\end{equation*}
Gronwall's inequality yields 
\begin{equation*}
\Vert \Psi_\mu(t) -\Psi(t)\Vert_{L^\infty}   \leq  C_0e^{\exp{C_0t^{8}}} (\mu t)^{\frac{1}{4}}. 
\end{equation*}
This completes the proof of  Theorem \ref{Theo1.2}.
\subsection*{Acknowledgement} The second author would thank Professor Changxing Miao for his comments and serious several suggestions.
%\end{proof}

\end{document}